\newtheorem{thm}{Theorem}[section]
\newtheorem{lem}[thm]{Lemma}
\newtheorem{cor}[thm]{Corollary}
\newtheorem{defi}[thm]{Definition}
\newtheorem{prop}[thm]{Proposition}
\newtheorem{rem}[thm]{Remark}
\newenvironment{proof}{\noindent {\bf Proof \phantom{9}}}
{\hfill $\square$ \vspace{0.25cm}}
\def\be{\begin{eqnarray}}
\def\ee{\end{eqnarray}}
\def\ben{\begin{eqnarray*}}
\def\een{\end{eqnarray*}}
\numberwithin{equation}{section}
\numberwithin{figure}{section}
\def\be{\begin{eqnarray}}
\def\ee{\end{eqnarray}}
\newcommand{\RR}{\mathbb{R}}
\newcommand{\DD}{\mathbb{D}}
\newcommand{\PP}{\mathbb{P}}
\def\me{\medskip\noindent}
\def\bi{\bigskip\noindent}
\title{\bf Adaptation in a stochastic multi-resources chemostat model}
\author{Nicolas Champagnat\thanks{Universit\'e de Lorraine, Institut Elie Cartan de Lorraine,
    UMR 7502, Vand\oe uvre-l\`es-Nancy, F-54506, France; E-mail:
    \texttt{Nicolas.Champagnat@inria.fr}}~\thanks{CNRS, Institut Elie Cartan de Lorraine, UMR
    7502, Vand\oe uvre-l\`es-Nancy, F-54506, France}~\thanks{Inria, TOSCA,
    Villers-l\`es-Nancy, F-54600, France}, Pierre-Emmanuel Jabin\thanks{Cscamm and Dpt. of
    Mathematics, University of Maryland, College Park, MD 20742 USA,
    E-mail:~\texttt{pjabin@umd.edu}}, Sylvie M\'el\'eard\thanks{CMAP, Ecole Polytechnique,
    CNRS, route de Saclay, 91128 Palaiseau Cedex-France; E-mail:
    \texttt{sylvie.meleard@polytechnique.edu}}}
\date{\today}
\begin{document}

\maketitle

\begin{abstract}
  We are interested in modeling the Darwinian evolution resulting from the interplay of phenotypic variation and natural selection
  through ecological interactions, in the specific scales of the biological framework of adaptive dynamics. Adaptive dynamics so far
  has been put on a rigorous footing only for direct competition models (Lotka-Volterra models) involving a competition kernel which
  describes the competition pressure from one individual to another one. We extend this to a multi-resources chemostat model, where
  the competition between individuals results from the sharing of several resources which have their own dynamics. Starting from a
  stochastic birth and death process model, we prove that, when advantageous mutations are rare, the population behaves on the
  mutational time scale as a jump process moving between equilibrium states (the polymorphic evolution sequence of the adaptive
  dynamics literature). An essential technical ingredient is the study of the long time behavior of a chemostat multi-resources
  dynamical system. In the small mutational steps limit this process in turn gives rise to a differential equation in phenotype space
  called canonical equation of adaptive dynamics. From this canonical equation and still assuming small mutation steps, we prove a
  rigorous characterization of the evolutionary branching points.
\end{abstract}

\bigskip
\emph{MSC 2000 subject classification:} 92D25, 60J80, 37N25, 92D15, 60J75
\bigskip

\emph{Key-words:} Mutation-selection individual-based model; fitness of invasion; adaptive
dynamics; long time behavior of dynamical systems; polymorphic evolution sequence;
multi-resources chemostat systems; evolutionary branching; piecewise-deterministic Markov processes.

\bigskip

\section{Introduction}
\label{sec:intro}

Since the first works of J. Monod \cite{monod} and Novik and Szilar \cite{ns}, \cite{ns2} (see also \cite{SW}), biologists have
developed procedures which allow to maintain a bacterial population at a stationary finite size while, at the same time, the bacteria
have a positive individual growth rate. The procedure is based on a chemostat: bacteria live in a growth container of constant volume
in which liquid is injected continuously. This liquid contains nutrients which are consumed by the bacteria. We assume that the
chemostat is well stirred, so that the distribution of bacteria and nutrients are spatially uniform. Moreover, since the container
has a finite volume and fresh liquid enters permanently, an equal amount of liquid pours out containing both unconsumed nutrients and
bacteria. This pouring out helps regulating the bacteria population size. These chemostats are extremely useful to make bacteria
adapt to the nutrients environment in such a way that they will increase their growth rate. This experimental device is for example
used (on large scales) in water treatment stations. In this paper we study a chemostat model where different nutrients arrive
continuously and are simultaneously consumed by bacteria which reproduce and die in a stochastic time scale. Bacteria are
characterized by genetic parameters which are inherited during reproduction except when a mutation occurs. These parameters as well
as the concentrations of each resource, influence the demographics of the bacteria population.

Usually, chemostat models are essentially deterministic where both nutrients and bacteria population dynamics are described by
coupled deterministic continuous process. This point of view is based on the fact that reproduction of bacteria happens at the same
time scale than nutrient dilution. In this work, we consider that the bacteria population is not so large that deterministic
approximation of its size can be justified. We develop stochastic chemostat models based on the previous work of Crump and O' Young
\cite{CW} (see also~\cite{CJL}), where bacteria dynamics is modeled by a birth and death process whose demographic parameters depend
on the concentration of a unique resource, the later evolving continuously in time. This model has been rigorously studied in
Collet-M\'el\'eard-Martinez \cite{CMM12}

The literature on multi-resource chemostats is extensive but mostly specialized to very precise models. In a more general framework as we consider here, we refer for example to \cite{SW}, \cite{CJR10}. 

Our goal is to study the Darwinian evolution of the genetic parameters of the bacteria in the chemostat. We will study how mutations
and competition for nutrients lead to their progressive adaptation. The study of adaptive dynamics has been developed in the last two
decades, in particular by Metz, Geritz and coauthors~\cite{MNG92,Mal96}, and Dieckmann and Law~\cite{DL96}. This theory emphasizes
the connections between ecology and evolution: competition for resources strongly influences the selection of advantaged bacteria.
The theory was put on a rigorous mathematical footing by Champagnat and M\'el\'eard and coworkers~\cite{C06,CFM07,CM11} with a
probabilistic approach (see also~\cite{diekmann-jabin-al-05,barles-perthame-06,CJ11} for a PDE approach). These results are based on
the combination of large population and rare mutation scalings, allowing to describe evolution as a succession of mutant invasions.
In all these works, all the models are either Lotka-Volterra models or models of competition for resources assumed to be at a
quasi-stable equilibrium. This amounts in all cases to assume direct competition between individuals.

Our model is more realistic and takes explicit dynamics for several resources into account. This leads to competitive interactions
between bacteria, driving selection, with a more complicated nonlinearity than the Lotka-Volterra direct competition (although links
exist under specific parameter scalings, see \cite{Mirrahimi2}). The modeling of adaptation for multi-resources chemostat has never
been studied from a probabilistic point of view and only recently with a PDE's approach (see \cite{Mirrahimi1}). Our goal is to study
adaptive dynamics in these models using the approach of~\cite{C06,CM11} combining large populations, rare and small mutations. Let us
emphasize that adding resources dynamics makes the mathematical analysis more complicated and difficult than for the Lotka-Volterra
model.

We first introduce the model in Section~\ref{sec:model}. We construct a stochastic multi-resource chemostat model which couples
deterministic and stochastic dynamics. The bacteria dynamics follows a birth and death process with reproduction due to resources
consumption and mutation. The resource process is deterministic between birth or death events in the bacteria population, with
coefficients depending on the composition of the population. Then we introduce multi-resources chemostat deterministic systems as
large size approximations of these probabilistic models.

The core ingredient in our proofs is a stability result on deterministic chemostat systems, seeing the stochastic process as a sort
of perturbation. In Section~\ref{sec:asympt-chemostat}, we thus study the long time behavior of such deterministic nonlinear systems
and prove the convergence to a unique equilibrium as soon as some non-degeneracy assumptions hold true. The proof is based on several
Lyapunov functionals which considerably extend the usual Lyapunov functional for chemostat systems (see \cite{SW}).

In Section~\ref{sec:GD}, we study the long time stability of the stochastic process, viewed as an approximation of the deterministic
chemostat system. We prove that all traits with zero density at equilibrium actually go extinct after a time of the order the
logarithm of the population size. We also prove that the time of exit from a neighborhood of the equilibrium grows exponentially in
the population size. This kind of result follows classically from large deviation estimates, which we prove here in the non standard
situation of perturbed resources dynamics, using the Lyapunov functionals of Section~\ref{sec:asympt-chemostat}.

We finally study in Section~\ref{sec:PES} the individual-based process on the evolutionary time scale in three steps: First in
Subsection~\ref{sec:cv-PES} we consider a large population and rare mutation scaling but we do not let the size of each mutation
go to $0$. This ensures that the stochastic model on the mutation time scale (evolutionary time scale) converges to a pure jump
process describing the successive invasions of advantageous mutants. This process, called Polymorphic Evolution Sequence (PES),
generalizes the TSS introduced in \cite{Mal96} and the PES for Lotka-Volterra models whose existence has been proved in \cite{CM11}.
% The jump measure defining our PES depends on the invasion fitness of a mutant trait in a resident resource-population at equilibrium.
% The sign of this fitness determines the direction of evolution and is studied in Section~\ref{sec:sign-fitness}.
The difficulty consists in extending these results to our more complicated model, which is done in Appendix~\ref{sec:pf-PES}. Second,
we introduce a scaling of small mutations in the PES, letting the size of each mutation go to $0$. We prove in
Section~\ref{sec:small-mut} that, in this limit, the dynamics of co-existing traits is governed by a system of ODEs extending the
canonical equation of adaptive dynamics (see \cite{DL96,CM11}). Finally, from this canonical equation and still assuming small
mutation steps, we prove a rigorous characterization of the evolutionary branching points. The only result which needs a different
approach from the Lotka-Volterra case~\cite{CM11} is the branching criterion, where we use the results of
Section~\ref{sec:asympt-chemostat} to prove that coexistence is maintained after evolutionary branching and that the distance between
the two branches increases. The details of the proof are given in Appendix~\ref{app:br}, after giving useful results on the sign of
the fitness function (Appendix~\ref{sec:sign-fitness}), which governs the possibility of invasion of a mutant trait in a resident
population at equilibrium.

\section{The Model}
\label{sec:model}

\subsection{The stochastic model}
\label{sec:IBM}

We consider an asexual population and a hereditary phenotypic trait. Each individual $i$ is characterized by its phenotypic trait
value $x^i\in {\cal X}$, hereafter referred to as its phenotype, or simply trait. The trait space ${\cal X}$ is assumed to be a
compact subset of $\mathbb{R}^{\ell}$. The individual-based microscopic model from which we start is a stochastic birth and death
process, with density-dependence through a reproduction depending on the resources in the chemostat. There are $r$ different
resources. Resources are injected in the fluid at the constant rate $1$. Their concentrations decrease first because of the linear
pouring out of the fluid from the chemostat and second by their consumption by the bacteria. We assume that the population's size
scales with an integer parameter $K$ tending to infinity while the effect of the individual resources consumption scales with
$\frac{1}{K}$. This allows taking limits in which individuals are weighted with $\frac{1}{K}$. In Section~\ref{sec:small-mut},
another crucial scale will be the mutation amplitude $\sigma$.

\me We consider, at any time $t\geq 0$, a finite number $N^K_t$ of individuals, each of them holding trait values in
${\cal X}$. Let us denote by $(x_{1},\ldots,x_{N^K_t})$ the trait values of these individuals. The state of the population at time
$t\geq 0$, rescaled by $K$, is described by the finite point measure on ${\cal X}$
\begin{equation}
  \label{eq:nu_t}
  \nu^{K}_t=\frac{1}{K}\sum_{{i=1}}^{N^K_{t}} \delta_{x_{i}}, 
\end{equation}
where $\delta_{x}$ is the Dirac measure at $x$. This measure belongs to the set of finite point measures on $\mathcal{X}$ with mass
$1/K$
$$
\mathcal{M}^K=\left\{\frac{1}{K}\sum_{i=1}^n\delta_{x_i};\ n\geq 0,\ x_1,\ldots,x_n\in\mathcal{X}\right\}.
$$
Let $\: \langle\nu,f\rangle$ denote the integral of the measurable function $f$ with
respect to the measure $\nu$ and $\mathrm{Supp}(\nu)$ denotes its support. Then $\: \langle\nu^{K}_t,{\bf 1}\rangle=\frac{N^K_t}{K}$
and for any $x\in {\cal X}$, the positive number $\langle\nu^{K}_t,{\bf 1}_{\{x\}}\rangle$ is called {the density} at time $t$ of
trait $x$.

\me The concentrations of the $r$ resources at time $t$ are described by a $r$-dimensional vector
\begin{equation*}
  \mathbf{R}^K(t) =(R^K_{1}(t), \cdots, R^K_{r}(t)) \in \mathbb{R}_{+}^r.
\end{equation*}

\me We introduce the following demographic parameters.

\begin{itemize}
\item An individual with trait $x$ reproduces with birth rate given by
  \begin{equation}
    \label{birth}
    \sum_{{k=1}}^r \eta_{k}(x) \, R^K_{k},
  \end{equation}
  where $\eta_{k}(x)>0$ represents the ability of a bacteria with trait $x$ to use resource $R_{k}^K$ for its reproduction.
\item Reproduction produces a single offspring. With probability $1- \mu_K\, p(x)$, the newborn holds the trait value $x$ of the
  parent.
\item A mutation occurs with probability $\mu_K\, p(x)$ with $p(x)>0$ and affect the trait of the descendant involved in the
  reproduction event. The trait of the descendant is $x+h$ with $h$ chosen according to $m_{\sigma}(x,h)dh$, where $\sigma\in(0,1]$
  scales the mutation amplitude.

  \noindent For any $x$ the probability measure scales as $m_{\sigma}(x,h)dh = \frac{1}{\sigma^{\ell}} m(x,\frac{h}{\sigma})dh$,
  where $m(x,h)dh$ is a probability measure with support $\,\{h, x+h\in{\cal X}\}$ and such that $x+\sigma h\in{\cal X}$ for all $h$
  in the support of $m(x,\cdot)$ and for all $\sigma\in(0,1)$ (this last point is for example true when ${\cal X}$ is a convex subset
  of $\mathbb{R}^{\ell}$). In other words, the support of $m_{\sigma}$ has a diameter smaller than $\sigma\text{diam}({\cal X}) $.
  
\item Small $\mu_K$ means rare mutations and we assume in the sequel that 
\begin{equation}
  \label{echmut}
  \lim_{K\to \infty} K\, \mu_{K} = 0.
\end{equation}

\item Each individual with trait $x$ disappears from the chemostat at rate $d(x)\in(0,+\infty)$ either from natural death or pouring
  out with the liquid. Since the fluid pours out of the chemostat at rate 1, one could assume $d(x)\geq 1$, although this is not
  necessary for our study.
\item The concentrations of resources in the liquid are solutions of the piecewise deterministic equations: for any $k\in \{1,\cdots,
  r\}$,
  \begin{equation}
    \label{eq:EDO-R}
  \frac{d R^K_{k}(t)}{dt} = g_{k} - R^K_{k}- R^K_{k}\left(\frac{1}{K}\sum_{i=1}^{N^K(t)}\eta_{k}(x_{i})\right) = g_{k} - R^K_{k} -
  R^K_{k}\, \langle \nu^{K}_t, \eta_{k}\rangle,    
  \end{equation}
  where $g_k>0$ represents the $k$-th resource injection in the chemostat. The term $- R^K_{k}$ represents the pouring out of the
  $k$-th resource and the term $ - R^K_{k}\, \langle \nu^{K}, \eta_{k}\rangle$ describes the resource consumption by bacteria. Note
  that any solution to such an equation satisfies for any $k\in \{1,\cdots, r\}$ the inequality $R^K_{k}(t) \leq R^K_{k}(0)\wedge g_{k}$.
\end{itemize}

\bigskip
\noindent The process $((\nu^{K}_t,\mathbf{R}^K(t)),t\geq 0)$ is a ${\cal M}^K\times\RR_+^r$-valued Markov process with infinitesimal
generator defined for any bounded measurable functions $\phi$ from ${\cal M}^K\times (\mathbb{R}_{+})^r$ to $\mathbb{R}$ and $\nu
=\frac{1}{K}\sum_{i=1}^n \delta_{x_i} $ and $\ R=(R_{1},\cdots, R_{r})$ by

\begin{align}
  L^K\phi(\nu, R) & = \int_{\cal X}\bigg\{\left(\phi\left(\nu+\frac{\delta_x}{K}, R\right)
    -\phi(\nu, R)\right)(1-\mu_K\, p(x))\left(\sum_{k=1}^r \eta_{k}(x)\, R_{k}\right)  \notag \\
  & +\int_{\mathbb{R}^{\ell}} \left(\phi\left(\nu+\frac{\delta_{x+h}}{K}, R\right)-\phi(\nu, R)\right)
  \mu_K\, p(x)\left(\sum_{k=1}^r \eta_{k}(x)\, R_{k}\right)\, m_{\sigma}(x,h)dh \notag \\
  & +\left(\phi\left(\nu-\frac{\delta_x}{K},R
    \right)-\phi(\nu,R)\right)\, d(x) \bigg\}\, K\nu(dx)\notag\\
  & +\sum_{k=1}^r \frac{\partial \phi}{\partial R_{k}}(\nu, R) \left(g_{k} - R_{k} - R_{k}\,\langle \nu, \eta_{k}\rangle\right).
  \label{eq:generator-renormalized-IPS}
\end{align}
The first term describes the births without mutation, the second term the births with mutation, the third term the deaths and the
fourth term the dynamics of resources.
  
  \subsection{An example}
\label{sec:example-1}

We consider a case with two resources and one-dimensional traits having opposite effects on the two resources consumption. More
precisely let us define the following parameters:
 
\me ${\cal X}=[-1,1]$, $ \mu_K\, p(x)\equiv p$, $m(x,h)dh={\cal N}(0,\sigma^2)$ (conditioned on $x+h\in{\cal X}$), $r=2$ (2
resources), $g_1=g_2=1$, $d(x)=\frac{1}{2}+a x^2$ with $a>0$ (minimum at $0$), $\eta_1(x)=(x-1)^2$, $\eta_2(x)=(x+1)^2$. The
parameter $a$ measures the impact of trait on mortality. High $a$ means strong effect of traits away from 0 on mortality.
  
\me Simulations are given in Figure \ref{sim1}, where initially $K$ individuals have the same trait $- 0.5$, for three different
mortality parameters, $a=1/4$, $a=1/2$ and $a=1$. The upper panels represent the time evolution of the density of traits in the
population, and the lower panels the resources concentrations. We observe two different behaviors. In the four pictures, the support
of the population process first approaches $0$, where mortality is the smallest, but next, for $a=1/2$ and $a=1$, the support of the
population process stays close to 0 for a long time, while for the two simulations with $a=1/4$, the population divides into two
subpopulations with distinct trait values, but still interacting for the same resources. This phenomenon is known as
\emph{evolutionary branching}.

\me The two different simulations of evolutionary branching for $a=1/4$ (Fig.~\ref{sim1}~(a) and~(b)\:) illustrate two slightly
different ways of branching. In the first one, evolutionary branching occurs when the traits in the population are distributed around
a value not exactly equal to 0, contrary to the second simulation. As will appear in this paper, this explains why, in the first
picture, the two branches are of very different size at the beginning of evolutionary branching, while they are of similar size in
the second picture.

\me Fig.~\ref{sim1}~(c) does not show evolutionary branching, but the width of the trait distribution around 0 is wider than in the
fourth picture, where $a$ is bigger. As will appear below, the third picture actually corresponds to a critical case with respect to
evolutionary branching. Finally, in Fig.~\ref{sim1}~(d), we also observe an interesting situation in the first phase of evolution,
where the trait distribution in the populations is approaching to 0. There is a jump in the support of the trait distribution around
time $t=300$, which can be understood as follows: at this time, a mutant\footnote{Actually two different mutants traits, if one looks
  closely at the simulation.} just appeared by chance, with a much bigger trait than the largest trait in the population just before.
This mutant survived and produced a large descendence with a significantly smaller death rate than the rest of the population, which
was therefore competitively disadvantaged, and went extinct fast due to this competitive pressure.

\begin{figure}
\label{sim1} 
 \begin{minipage}[b]{0.47\textwidth}
   \begin{center}     
    \includegraphics[width=6cm]{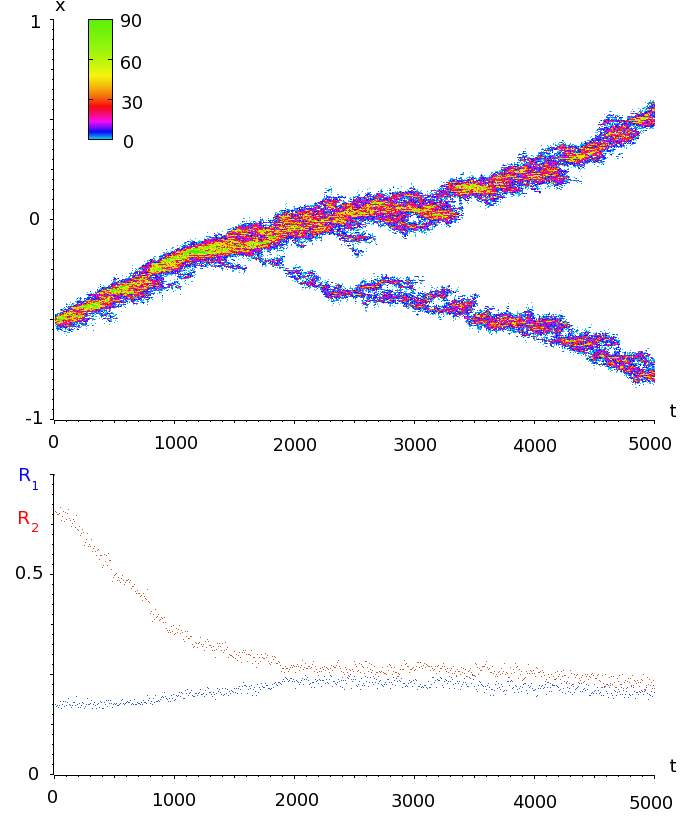}
    \medskip\\
    (a) $K=300,\ p=0.1,\ \sigma=0.01,\ a=1/4$
   \end{center}
  \end{minipage}
  \begin{minipage}[b]{0.47\textwidth}
   \begin{center}     
    \includegraphics[width=6cm]{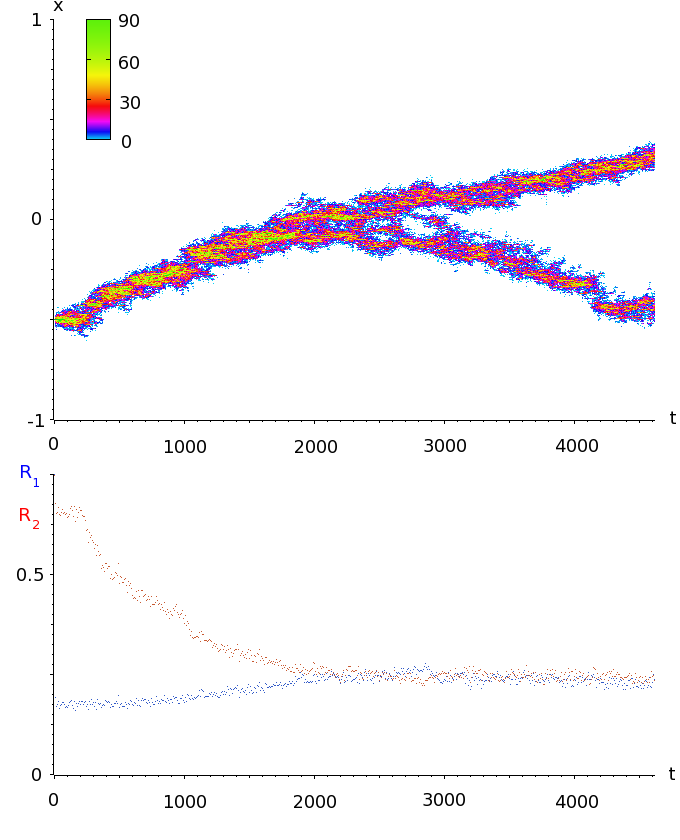}
    \medskip\\
    (b) $K=300,\ p=0.1,\ \sigma=0.01,\ a=1/4$
   \end{center}
  \end{minipage}
\bigskip

 \begin{minipage}[b]{0.47\textwidth}
   \begin{center}     
    \includegraphics[width=6cm]{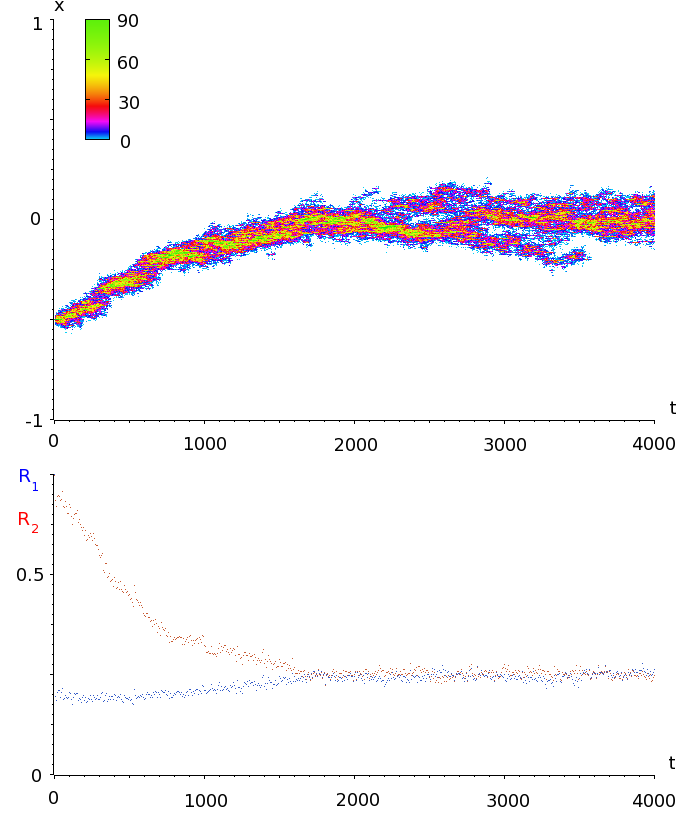}
    \medskip\\
    (c) $K=300,\ p=0.1,\ \sigma=0.01,\ a=1/2$
   \end{center}
  \end{minipage}
  \begin{minipage}[b]{0.47\textwidth}
   \begin{center}     
    \includegraphics[width=6cm]{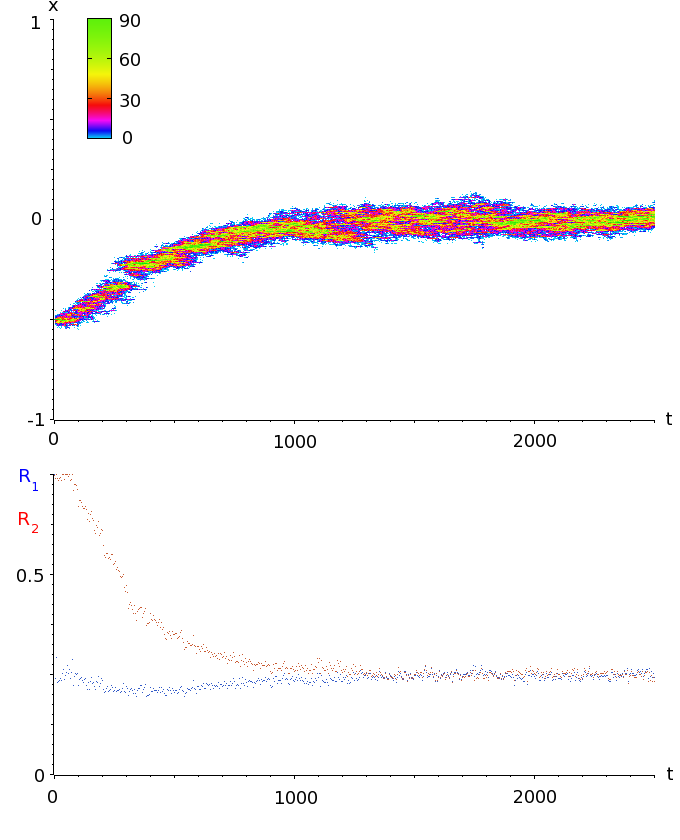}
    \medskip\\
    (d) $K=300,\ p=0.1,\ \sigma=0.01,\ a=1$
   \end{center}
  \end{minipage}\\
  \caption{Simulations of the individual-based model for three different values of the parameter $a$. Upper panels: time evolution of
    the trait density in the population. Lower panels: time evolution of the resources concentrations.}
\end{figure}

\me Our goal in this paper is to give a description of these pictures under a specific scaling of the parameters of the
individual-based model, and to give some conditions under which evolutionary branching appears. The evolution of the population
results from an instantaneous trade-off between death rate minimization and the birth rate maximization (through a better consumption
of resources) of the individuals.

\subsection{The process at the ecological time scale}
The ecological time scale is the birth and death time scale in which  the process is defined. The evolution  takes place on the longer time scale of mutations.

\bi The following properties will be assumed in the sequel.
\begin{align}
  &
  \begin{aligned}
    & \text{\it The functions\ }\eta_{1},\ldots,\eta_r \text{\it\ and\ } d \text{\it\ are\ } C^2 \text{\it\ on\ } {\cal X} \text{\it\ and\ } p \text{\it\ is Lipschitz continuous on\ } {\cal
      X}.\\
   & \text{\it Since\ } {\cal X} \text{\it\ is compact, these functions are lower bounded by positive constants.}% on\ } {\cal X}.    
  \end{aligned}
  \label{A1} \\
  & \notag \\ &
  \begin{aligned}
    & \text{\it The function\ } m(x,h) (\text{\it and thus\ } m_\sigma(x,h)) \text{\it\ is Lipschitz continuous on\ } {\cal
      X}\times\mathbb{R}^{\ell}.\\
    & \text{\it In addition, for any\ } \sigma\in(0,1], \hbox{\it\ there exists a function\ }
    \bar{m}_{\sigma}:\mathbb{R}^{\ell}\rightarrow\mathbb{R}_+ \text{\it\ such
      that\ }\\ &  m_{\sigma}(x,h)\leq \bar{m}_{\sigma}(h) \text{\it\ for any\ } x\in{\cal X} \text{\it\ and\ } h\in\mathbb{R}^{\ell}
    \text{\it\ and\ } \int_{\mathbb{R}^\ell}
    \bar{m}_{\sigma}(h)dh<+\infty.    
  \end{aligned}
  \label{A2}
   \\
  & \notag \\ & \text{\it The initial conditions satisfy} \quad 
\sup_K\mathbb{E}(\langle\nu^K_0,1\rangle)<\infty\quad ;\quad \sup_{K, k\in\{1,\ldots,r\}}\,R^K_k(0)<\infty.
   \label{A2bis} 
\end{align}

\me
For fixed $K$, under~\eqref{A1}--\eqref{A2}--\eqref{A2bis}, the
existence and uniqueness in law of a process on $\mathbb{D}(\mathbb{R}_+, {\cal M}^K\times (\mathbb{R}_{+})^r)$ with infinitesimal
generator $L^K$ can be adapted from the one in Fournier-M\'el\'eard \cite{FM04} or \cite{CFM07}. 
  The process is constructed as
solution of a stochastic differential equation driven by point Poisson measures describing each jump event plus a drift term
describing the resource dynamics and Assumptions~\eqref{A1} and~\eqref{A2} prevent the population
from exploding since resources concentrations are bounded.

 \me Let us recall the construction. Let $N_1(ds,di,d\theta)$, $N_2(ds,di,d\theta,dh)$ and $N_3(ds,di,d\theta)$ be independent Poisson point measures on
$\mathbb{R}_+\times\mathbb{N}\times\mathbb{R}_+$, $\mathbb{R}_+\times\mathbb{N}\times\mathbb{R}_+\times\mathbb{R}^{\ell}$ and
$\mathbb{R}_+\times\mathbb{N}\times\mathbb{R}_+$ respectively, and with intensity measures $q_1(ds,di,d\theta)$,
$q_2(ds,di,d\theta,dh)$ and $q_3(ds,di,d\theta)$ respectively, where
$$
q_1(ds,di,d\theta)=q_3(ds,di,\theta)=\mathbbm{1}_{\{s\geq 0, \theta\geq 0\}}ds\left(\sum_{k=1}^\infty \delta_k(di)\right)d\theta
$$
and
$$
q_2(ds,di,d\theta,dh)=\mathbbm{1}_{\{s\geq 0, \theta\geq 0\}}ds\left(\sum_{k=1}^\infty \delta_k(di)\right)d\theta\bar{m}_{\sigma}(h)dh.
$$

\noindent  For all $\nu\in{\cal M}^K$, we define $x_i(\nu)\in{\cal X}$ for all $i\geq 1$ such that
$\
\nu=\frac{1}{K}\left(\sum_{i\geq 1}\delta_{x_i(\nu)}\right)
$
and
$$
x_1(\nu)\preceq x_2(\nu)\preceq\ldots\preceq x_{n(\nu)}(\nu),
$$
where $n(\nu):=K\langle\nu,1\rangle$ and $\preceq$ is an arbitrary total order on ${\cal X}\subset\mathbb{R}^{\ell}$ (e.g.\ the lexicographic order.

\noindent Now, let us consider the equation
\begin{align}
  \nu^K_t & =\nu_0^K+\frac{1}{K}\left\{ \int_0^t\int_{\mathbb{N}}\int_0^\infty\delta_{x_i(\nu^K_{s-})}\mathbbm{1}_{\{i\leq n(\nu^K_{s-}),\
      \theta\leq(1-\mu_Kp(x_i(\nu^K_{s-})))\sum_{k=1}^r\eta_k(x_i(\nu^K_{s-}))R_k(s)\}}N_1(ds,di,d\theta) \right. \notag \\ &
  +\int_0^t\int_{\mathbb{N}}\int_0^\infty\int_{\mathbb{R}^{\ell}}\delta_{x_i(\nu^K_{s-})+h}\mathbbm{1}_{\{i\leq n(\nu^K_{s-}),\ \theta\leq
    m(x_i(\nu^K_{s-}),h)\mu_Kp(x_i(\nu^K_{s-}))\sum_{k=1}^r\eta_k(x_i(\nu^K_{s-}))R_k(s)\}}N_2(ds,di,d\theta,dh) \notag \\ & \left.
    -\int_0^t\int_{\mathbb{N}}\int_0^\infty\delta_{x_i(\nu^K_{s-})}\mathbbm{1}_{\{i\leq n(\nu^K_{s-}),\ \theta\leq
      d(x_i(\nu^K_{s-}))\}}N_3(ds,di,d\theta)\right\}, \label{eq:constr-nu}
\end{align}
coupled with the equations~(\ref{eq:EDO-R}) for resources with
$R_k(0)\geq 0$.
% : for all $k$,
% \begin{equation}
%   \label{eq:EDO-R}
%   \frac{d R_{k}(t)}{dt} = g_{k} - R_{k}- R_{k}\, \langle \nu_s, \eta_{k}\rangle  \ ; \ R_k(0)\geq 0.
% \end{equation}

\me
\noindent This system of equations admits a unique solution, up to a  time  of
accumulation of jumps.  The next proposition shows that this time is infinite and gives uniform   moment estimates. 

\begin{prop}
  \label{prop:moments-nu}
  Assume ~\eqref{A1}--\eqref{A2}--\eqref{A2bis}, then the process does not explode in finite time  almost surely  and for all $t\geq 0$,
      \begin{equation}
      \label{eq:a}
    \sup_{K\geq 1}\sup_{t\geq 0}\mathbb{E}\left[\langle\nu^K_{t},1\rangle+\sum_{k=1}^{r}R^K_k(t)\right]<\infty.
     \end{equation}
\end{prop}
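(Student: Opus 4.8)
The plan is to build a single Lyapunov functional for the coupled Markov process $(\nu^K,\mathbf{R}^K)$ that is dissipative uniformly in $K$, the point being an exact cancellation between the consumption terms occurring both in the bacterial birth rates and in the resource equations~\eqref{eq:EDO-R}. I would take $V(\nu,R):=\langle\nu,1\rangle+\sum_{k=1}^r R_k$. First note that the resources are bounded deterministically: by~\eqref{eq:EDO-R}, $\frac{d}{dt}R^K_k\le g_k-R^K_k$, so $R^K_k(t)\le R^K_k(0)\vee g_k$, which by~\eqref{A2bis} is bounded by some constant $M$ uniformly in $K,k,t$; in particular $V$ can blow up only if $\langle\nu^K,1\rangle$ does. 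Applying the generator~\eqref{eq:generator-renormalized-IPS} to $\phi=V$ (after the usual truncation making $V$ admissible), the mutation and non-mutation birth terms each contribute the mass increment $1/K$ and together give $\sum_{k} R_k\langle\nu,\eta_k\rangle$, the death term gives $-\langle\nu,d\rangle$, and the resource term gives $\sum_{k}(g_k-R_k-R_k\langle\nu,\eta_k\rangle)$; the terms $\pm R_k\langle\nu,\eta_k\rangle$ cancel, leaving
\[
L^KV(\nu,R)=\sum_{k=1}^r g_k-\langle\nu,d\rangle-\sum_{k=1}^r R_k.
\]
Since $d\ge\underline d>0$ on the compact ${\cal X}$ by~\eqref{A1}, with $c:=\min(\underline d,1)>0$ this yields the pointwise bound $cV+L^KV\le G:=\sum_{k=1}^r g_k$.

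Next, non-explosion. Let $\tau_n:=\inf\{t\ge 0:\langle\nu^K_t,1\rangle\ge n\}$; the solution of~\eqref{eq:constr-nu}--\eqref{eq:EDO-R} is well defined up to $\tau_\infty:=\lim_n\tau_n$, and on $[0,\tau_n]$ all jump rates are bounded, so Dynkin's formula applies there. From $L^KV\le G$ it gives $\mathbb{E}[\langle\nu^K_{t\wedge\tau_n},1\rangle]\le\mathbb{E}[V(\nu^K_0,\mathbf{R}^K(0))]+Gt$, finite by~\eqref{A2bis}. As the jumps of $\langle\nu^K,1\rangle$ have size $1/K$, on $\{\tau_n\le t\}$ one has $\langle\nu^K_{\tau_n},1\rangle\ge n$, so $\mathbb{P}(\tau_n\le t)\le n^{-1}(\mathbb{E}[V(\nu^K_0,\mathbf{R}^K(0))]+Gt)\to 0$ as $n\to\infty$ for every $t$; hence $\tau_\infty=\infty$ almost surely, i.e.\ the process does not explode.

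Finally, for~\eqref{eq:a} I would apply Dynkin's formula to the space-time function $(s,\nu,R)\mapsto e^{cs}V(\nu,R)$ up to $t\wedge\tau_n$; its associated drift equals $e^{cs}(cV+L^KV)\le e^{cs}G$, so
\[
\mathbb{E}\big[e^{c(t\wedge\tau_n)}V(\nu^K_{t\wedge\tau_n},\mathbf{R}^K(t\wedge\tau_n))\big]\le\mathbb{E}[V(\nu^K_0,\mathbf{R}^K(0))]+\frac{G}{c}(e^{ct}-1).
\]
Letting $n\to\infty$ and using Fatou's lemma (note $t\wedge\tau_n\to t$), then dividing by $e^{ct}$, gives $\mathbb{E}[V(\nu^K_t,\mathbf{R}^K(t))]\le e^{-ct}\mathbb{E}[V(\nu^K_0,\mathbf{R}^K(0))]+G/c$, and~\eqref{A2bis} bounds the right-hand side by a constant independent of $K$ and $t$; this is~\eqref{eq:a}. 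The only delicate point is the localization bookkeeping — making Dynkin's formula rigorous before non-explosion is known, and checking $\mathbb{E}\int_0^{t\wedge\tau_n}|L^KV|\,ds<\infty$, which holds because the population size, hence every rate, is bounded on $[0,\tau_n]$; the genuinely useful structural fact, making the bound uniform in both $t$ and $K$, is the cancellation of the terms $\pm R_k\langle\nu,\eta_k\rangle$, which turns an a priori coupled and possibly fast-growing system into a scalar dissipative inequality.
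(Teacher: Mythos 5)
Your proposal is correct and follows essentially the same route as the paper: the same functional $\langle\nu,1\rangle+\sum_k R_k$, the same cancellation of the consumption terms $\pm R_k\langle\nu,\eta_k\rangle$ between birth rates and resource dynamics, and the same use of $d\geq\underline d>0$ to get a dissipative bound uniform in $t$ and $K$ (your $e^{ct}V$ trick is just Gronwall in disguise). The only cosmetic difference is non-explosion, which the paper obtains by stochastic domination of $K\langle\nu^K_t,1\rangle$ by a Yule process while you use localization at $\tau_n$ plus the first-moment bound and Markov's inequality; both are valid and standard.
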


\begin{proof}
Since $0\leq R^K_k(t)\leq g_k\vee \|R^K_k(0)\|_{\infty}=: R^*_k$ for all $t\geq 0$,  the
individual birth rate in the population is always smaller than $\sum_{k=1}^r \eta_k^* R_k^*$, where $\eta^*_k:= \| \eta_k\|_{\infty}$. Therefore, for all $t\leq 0$,
$\
K\langle\nu^K_t,1\rangle$
is stochastically dominated  by a pure birth (Yule) process in $\mathbb{Z}_+$ with transition rate $k\sum_{k=1}^r \eta_k^* R_k^*$ from $k$
to $k+1$ (this process can be easily explicitly constructed from the point processes $N_1$, $N_2$ and $N_3$). The Yule process is a.s. finite for any $t$, so that  the process is well defined for all times. In addition, we get that
\begin{align*}
  \langle\nu^K_t,1\rangle+\sum_{k=1}^r R^K_k(t) & =\langle\nu^K_0,1\rangle+\sum_{k=1}^r R^K_k(0)
  +\int_0^t\int_{{\cal X}}\left(\sum_{k=1}^r\eta_k(x)R^K_k(s)-d(x)\right)\nu^K_s(dx)ds \\
  & +\sum_{k=1}^r\int_0^t\left(g_k-R^K_k(s)-R^K_k(s)\langle\nu^K_s,\eta_k\rangle\right)ds+M_t,
\end{align*}
with
\begin{align*}
  M_t & =\frac{1}{K}\int_0^t\int_{\mathbb{N}}\int_0^\infty\mathbbm{1}_{\{i\leq n(\nu^K_{s-}),\
    \theta\leq(1-\mu_Kp(x_i(\nu^K_{s-})))\sum_{k=1}^r\eta_k(x_i(\nu^K_{s-}))R^K_k(s)\}}\tilde{N}_1(ds,di,d\theta) \\ &
  +\frac{1}{K}\int_0^t\int_{\mathbb{N}}\int_0^\infty\int_{\mathbb{R}^{\ell}}\mathbbm{1}_{\{i\leq n(\nu^K_{s-}),\ \theta\leq
    m(x_i(\nu^K_{s-}),h)\mu_Kp(x_i(\nu^K_{s-}))\sum_{k=1}^r\eta_k(x_i(\nu^K_{s-}))R^K_k(s)\}}\tilde{N}_2(ds,di,d\theta,dh) \\ &
  +\frac{1}{K}\int_0^t\int_{\mathbb{N}}\int_0^\infty\mathbbm{1}_{\{i\leq n(\nu^K_{s-}),\ \theta\leq d(x_i(\nu^K_{s-}))\}}\tilde N_3(ds,di,d\theta),
\end{align*}
where $\tilde{N}_i=N_i-q_i$, $i=1,2,3$, are the compensated Poisson processes. The process  $(M_t, t\geq 0)$ is a martingale and thus
$$
\mathbb{E}\left[\langle\nu^K_t,1\rangle+\sum_{k=1}^r R^K_k(t)\right] =\mathbb{E}\left[\langle\nu^K_0,1\rangle+\sum_{k=1}^r
  R^K_k(0)\right]+\int_0^t\left(-\langle\nu^K_s,d\rangle+\sum_k(g_k-R^K_k(s))\right)ds.
$$
\eqref{eq:a} then follows from the fact that $d(\cdot)\geq\underline{d}>0$ by Assumption~\eqref{A1} and Gronwall's lemma.
In addition, because of the shift invariance of the
Poisson point measures $N_1$, $N_2$ and $N_3$, the process $(\nu^K_t,\mathbf{R}^K(t))_{t\geq 0}$ is strong Markov.
\end{proof}

\bigskip

\noindent  The next result shows that mutations cannot occur on  bounded time intervals,  since    the mutation time scale  $t/K\mu_K$ tends to infinity by \eqref{echmut}. 

\me We define
  $T^K_{\text{mut}}$ as the first mutation time of the population process $(\nu^K_{t}, t\geq 0)$.

\begin{cor}
  \label{lemma:not-too-many-mutation}
  Assume ~\eqref{A1}--\eqref{A2}--\eqref{A2bis}. Then for all $\eta>0$, there exists $\varepsilon>0$ such that for all
  $t\geq 0$
  $$
  \limsup_{K\rightarrow\infty}\mathbb{P}\left(\text{a mutation occurs on\
    }\left[\frac{t}{K\mu_K},\frac{t+\varepsilon}{K\mu_K}\right]\right)\leq\eta.
  $$
\end{cor}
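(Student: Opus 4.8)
The plan is to bound directly the expected number of mutation events occurring on the interval $I^K_{t,\varepsilon}:=[t/(K\mu_K),(t+\varepsilon)/(K\mu_K)]$ and then to apply Markov's inequality: if $\mathcal N$ denotes the (a.s.\ finite, by Proposition~\ref{prop:moments-nu}) number of mutation events on $I^K_{t,\varepsilon}$, then ``a mutation occurs on $I^K_{t,\varepsilon}$'' is the event $\{\mathcal N\geq 1\}$, so that $\mathbb P(\mathcal N\geq 1)\leq\mathbb E[\mathcal N]$.

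Reading off the second term in the generator~\eqref{eq:generator-renormalized-IPS}, the instantaneous rate of mutation events when the population is $\nu$ and the resources are $R$ equals $\int_{\cal X}\mu_K p(x)\big(\sum_{k=1}^r\eta_k(x)R_k\big)K\nu(dx)$. Using Assumption~\eqref{A1} (so $p(x)\leq\|p\|_\infty=:\bar p$ and $\eta_k(x)\leq\eta_k^*$) together with the a priori bound $R^K_k(s)\leq R^*_k$ established in the proof of Proposition~\ref{prop:moments-nu}, this rate is at most $K\mu_K\,\bar p\,C\,\langle\nu^K_s,1\rangle$ at time $s$, where $C:=\sum_{k=1}^r\eta_k^* R_k^*$. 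Hence, by the compensation formula for the counting process of mutation events (its compensator is the time integral of the above rate) and Fubini's theorem,
$$
\mathbb E[\mathcal N]\ \leq\ K\mu_K\,\bar p\,C\int_{t/(K\mu_K)}^{(t+\varepsilon)/(K\mu_K)}\mathbb E\big[\langle\nu^K_s,1\rangle\big]\,ds.
$$

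By Proposition~\ref{prop:moments-nu}, $\bar N:=\sup_{K\geq 1}\sup_{s\geq 0}\mathbb E[\langle\nu^K_s,1\rangle]<\infty$, so the last display is at most $K\mu_K\,\bar p\,C\,\bar N\cdot\big(\varepsilon/(K\mu_K)\big)=\bar p\,C\,\bar N\,\varepsilon$, a bound independent of both $K$ and $t$. It then suffices to set $\varepsilon:=\eta/(\bar p\,C\,\bar N)$; this in fact yields the stronger uniform estimate $\sup_{K\geq1}\sup_{t\geq0}\mathbb P(\text{a mutation occurs on }I^K_{t,\varepsilon})\leq\eta$, so there is no real obstacle here. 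The only points worth double-checking are that the bound $R^K_k\leq R^*_k$ and the first-moment estimate of Proposition~\ref{prop:moments-nu} are uniform in time, which is indeed the case, and that the length $\varepsilon/(K\mu_K)$ of $I^K_{t,\varepsilon}$ exactly cancels the prefactor $K\mu_K$ in the mutation rate, which is the reason the statement does not even need the assumption~\eqref{echmut}.
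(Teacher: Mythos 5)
Your proof is correct, and it takes a genuinely more direct route than the paper. The paper first reduces to $t=0$ via the Markov property, then couples the population with the mutation-free process $\tilde\nu$ driven by $N_1,N_3$ only, and dominates the first mutation time by the first jump of an auxiliary counting process $A$ built from $N_2$; the independence of $N_2$ from $(\tilde\nu,\tilde{\mathbf R})$ lets them write $\mathbb P(T_A>\varepsilon/K\mu_K)$ as an exponential functional and conclude with $1-e^{-x}\le x$ and the moment bound of Proposition~\ref{prop:moments-nu}. You instead bound $\mathbb P(\mathcal N\ge 1)\le\mathbb E[\mathcal N]$ and compute $\mathbb E[\mathcal N]$ via the compensator of the mutation counting process, using exactly the rate read off the second term of~\eqref{eq:generator-renormalized-IPS}, the a priori bound $R^K_k\le R^*_k$, and the \emph{uniform-in-time} estimate~\eqref{eq:a}; the uniformity in $t$ is what makes the Markov-property reduction and the coupling unnecessary. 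Both arguments hinge on the same cancellation of $K\mu_K$ against the interval length $\varepsilon/(K\mu_K)$ (so neither actually uses~\eqref{echmut} here), but yours yields the slightly stronger conclusion $\sup_{K\ge1}\sup_{t\ge0}\mathbb P(\cdot)\le\eta$ with less machinery; the paper's coupling construction, on the other hand, is reused in spirit elsewhere (e.g.\ in Theorem~\ref{largepop}), which is presumably why it is introduced at this point. The only steps you should be explicit about are the localization/monotone-convergence argument justifying the compensation formula for the possibly unbounded counting process, and the fact that the acceptance region is predictable (it involves $\nu^K_{s-}$ and the continuous adapted process $\mathbf R^K$); both are routine here.
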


\begin{proof} The proof is based on a coupling inspired by the previous proposition.
  By  the Markov property, it is sufficient to prove the result for $t=0$.
 Let $\tilde{\nu}_t$ be the solution of
  \begin{align*}
    \tilde{\nu}_t & =\nu_0+\frac{1}{K}\left\{ \int_0^t\int_{\mathbb{N}}\int_0^\infty\delta_{x_i(\tilde{\nu}_{s-})}\mathbbm{1}_{\{i\leq n(\tilde{\nu}_{s-}),\
        \theta\leq(1-\mu_Kp(x_i(\tilde\nu_{s-})))\sum_{k=1}^r\eta_k(x_i(\tilde\nu_{s-}))\tilde R_k(s)\}}N_1(ds,di,d\theta) \right.  \\ &
    % +\int_0^t\int_{\mathbb{N}}\int_0^\infty\int_{\mathbb{R}^{\ell}}\delta_{x_i(\nu_{s-})+h}\mathbbm{1}_{\{i\leq n(\nu_{s-}),\ \theta\leq
    % m(x_i(\nu_{s-}),h)\mu_Kp(x_i(\nu_{s-}))\sum_{k=1}^r\eta_k(x_i(\nu_{s-}))R_k(s)\}}N_2(ds,di,d\theta,dh) \notag \\ &
    \left. -\int_0^t\int_{\mathbb{N}}\int_0^\infty\delta_{x_i(\tilde\nu_{s-})}\mathbbm{1}_{\{i\leq n(\tilde\nu_{s-}),\ \theta\leq
        d(x_i(\tilde\nu_{s-}))\}}N_3(ds,di,d\theta)\right\},
  \end{align*}
  coupled with the equation
  $$
  \frac{d \tilde R_{k}(t)}{dt} = g_{k} -\tilde R_{k}-\tilde R_{k}\, \langle \tilde\nu_s, \eta_{k}\rangle  
  $$
    Comparing with~(\ref{eq:constr-nu}) we see that $\tilde\nu_t=\nu_t$ and $\tilde R_k(t)=R_k(t)$ for all $t<T^K_{\text{mut}}$.

  \me Now, let us define the $\mathbb{Z}_+$-valued process
  $$
  A_t=\int_0^t\int_{\mathbb{N}}\int_0^\infty\int_{\mathbb{R}^{\ell}}\mathbbm{1}_{\{i\leq n(\tilde\nu_{s-}),\ \theta\leq
    \bar m_{\sigma}(h)\mu_K r\bar\eta\bar g\}}N_2(ds,di,d\theta,dh),
  $$
  where  $\bar\eta=\max\{\sup_{x\in\mathcal{X}}\eta_k(x); 1\leq k\leq r\}$ and $\bar
  g=\max\{\|R_k(0)\|_{L^\infty}\vee g_k; 1\leq k\leq r\}$. Then $T^K_{\text{mut}}\geq T_A$, where $T_A$ is the first jump time of
  $(A_t,t\geq 0)$. Since  $N_2$ is independent of $(\tilde\nu,\tilde{\mathbf{R}})$, we have
  \begin{align*}
    \mathbb{P}\left(T^K_{\text{mut}}>\frac{\varepsilon}{K\mu_K}\right) & \geq\mathbb{P}\left(T_A>\frac{\varepsilon}{K\mu_K}\right)  =\mathbb{E}\left[\exp\left(-\int_0^{\varepsilon/K\mu_K}n(\tilde\nu^K_s)\mu_K r\bar\eta\bar g ds\right)\right] \\
    & \geq 1-K\mu_K r\bar\eta\bar g\int_0^{\varepsilon/K\mu_K}\mathbb{E}\langle\tilde\nu^K_s,1\rangle\, ds,
  \end{align*}
  and Lemma~\ref{lemma:not-too-many-mutation} follows from the boundedness of the moments (cf. Proposition \ref{prop:moments-nu}).
\end{proof}

\subsection{Convergence to deterministic chemostat systems when $K\rightarrow+\infty$}
In this section we study the large population and rare mutation approximation of the process described above when the initial measure has the finite support $\{x_{1}, \cdots, x_{n}\}$. The limit is deterministic and continuous and the mutation events disappear.
The next result is a simple but
useful first step  to characterize the dynamics between mutation events.

\me
We introduce the following chemostat (coupled) system, denoted  by  $\text{CH}(n, x_{1}, \cdots, x_{n})$,  solved by $(u(t), R(t))\in \mathbb{R}_{+}^{n+r}$: 
\me For $i=1\cdots n$, for $k= 1 \cdots r$,
\begin{equation}
  \label{system}
  \begin{cases}&\displaystyle \dot{u}_{i} =
    u_{i}\bigg(-d(x_{i}) + \sum_{k=1}^r \eta_{k}(x_{i})\, R_{k}\bigg) ;\\ & \displaystyle
    \dot{R}_{k} = g_{k} - R_{k}\left(1+ \sum_{j=1}^n \eta_{k}(x_{j})\,
      u_{j}\right).\end{cases}
\end{equation}

\begin{thm}
  \label{largepop}
  Let $x_1,\ldots,x_n$ be distinct points in ${\cal X}$. Assume that $\nu^K_0=\sum_{i=1}^n u^{K,i}_0\delta_{x_i}$ such that
  $u^{K,i}_0\rightarrow u^i_0$ and $R^K_k(0)\rightarrow R_{k,0}$ in probability, where $u^i_0$ and $R_{k,0}$ are 
  deterministic, nonnegative numbers.  Then, for all $T>0$,
  $$
  \lim_{K\rightarrow+\infty}\PP(T^K_{\text{mut}}<T)=0,
  $$
  and  $$
  \sup_{0\leq t\leq T}\left(\sum_{i=1}^n|\langle\nu^K_t,\mathbbm{1}_{x_i}\rangle-u_i(t)|
    +\sum_{k=1}^r|R^K_k(t)-R_k(t)|\right)\rightarrow 0
  $$
  in probability as $K\rightarrow+\infty$, where $(u_1(t),\ldots,u_n(t),R_1(t),\ldots, R_r(t))$ is the solution of the chemostat
  system $\text{CH}(n,x_1,\ldots,x_n)$ with initial condition $u_i(0)=u^i_0$ and $R_k(0)=R_{k,0}$.
\end{thm}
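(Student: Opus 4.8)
The plan is to reduce the statement to a finite-dimensional law of large numbers for the process with mutations switched off, and then to remove the killing at the first mutation. First I would control $T^K_{\text{mut}}$ directly: on $[0,T]$ the total mutation rate is bounded by $\mu_K\|p\|_\infty\big(\sum_{k=1}^r\eta_k^*R_k^*\big)\,K\langle\nu^K_s,1\rangle$ (in the notation of the proof of Proposition~\ref{prop:moments-nu}), so that
$$\PP(T^K_{\text{mut}}<T)\le C\,\mu_K K\int_0^T\mathbb{E}[\langle\nu^K_s,1\rangle]\,ds\le C'\,K\mu_K\,T\to 0\ \text{as}\ K\to\infty$$
by Proposition~\ref{prop:moments-nu} and \eqref{echmut} (this is the estimate underlying Corollary~\ref{lemma:not-too-many-mutation}, on a fixed time interval rather than the mutational one). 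Hence it suffices to prove the convergence for the modified process $\bar\nu^K$ in which every newborn inherits its parent's trait; by the Poisson construction \eqref{eq:constr-nu} (redirecting the atoms of $N_2$ to the parent's location), $\bar\nu^K$ and $\nu^K$ coincide on $[0,T^K_{\text{mut}})$, and $\bar\nu^K$ keeps its support in $\{x_1,\dots,x_n\}$ for all times. Writing $U^{K,i}_t:=\langle\bar\nu^K_t,\mathbbm{1}_{x_i}\rangle$, the pair $\big((U^{K,1}_t,\dots,U^{K,n}_t),\mathbf{R}^K(t)\big)$ is then an $\mathbb{R}_+^{n+r}$-valued Markov process, and Proposition~\ref{prop:moments-nu} applies verbatim to it.

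Next I would write the semimartingale decomposition read off the generator: for $i=1,\dots,n$,
$$U^{K,i}_t=U^{K,i}_0+\int_0^t U^{K,i}_s\Big(-d(x_i)+\sum_{k=1}^r\eta_k(x_i)R^K_k(s)\Big)\,ds+M^{K,i}_t,$$
where $M^{K,i}$ is a square-integrable martingale with $\langle M^{K,i}\rangle_t=\frac1K\int_0^t U^{K,i}_s\big(d(x_i)+\sum_k\eta_k(x_i)R^K_k(s)\big)\,ds$ (up to a term of order $\mu_K$), while $R^K_k$ solves the ODE \eqref{eq:EDO-R}, i.e. $\dot R^K_k=g_k-R^K_k-R^K_k\sum_j\eta_k(x_j)U^{K,j}$. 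Since $0\le R^K_k(t)\le R^*_k$ and $\sup_K\sup_t\mathbb{E}[\langle\bar\nu^K_t,1\rangle]<\infty$, the drift of $U^{K,i}$ and the right-hand side of the resource ODE are uniformly controlled on $[0,T]$, and Doob's inequality gives $\mathbb{E}[\sup_{[0,T]}|M^{K,i}_t|^2]\le 4\langle M^{K,i}\rangle_T\le C/K\to 0$. The jumps of $U^{K,i}$ being of size $1/K$, these bounds yield $C$-tightness of $\big((U^{K,\cdot}_t),\mathbf{R}^K(t)\big)_{t\le T}$ in $\mathbb{D}([0,T],\mathbb{R}_+^{n+r})$, via the Aldous--Rebolledo criterion (the compensators are Lipschitz in the state and bounded).

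To identify the limit, along any convergent subsequence the limit $\big((u_i),(R_k)\big)$ is continuous; passing to the limit in the decomposition above (the coefficients $d(x_i),\eta_k(x_i),g_k$ are fixed constants, so no regularity issue arises, and $M^{K,i}\to0$ uniformly in $L^2$) shows that $\big((u_i),(R_k)\big)$ solves $\text{CH}(n,x_1,\dots,x_n)$ with the prescribed initial condition. The right-hand side of \eqref{system} is polynomial, hence locally Lipschitz; moreover any solution stays in the region $\{0\le R_k\le R_k(0)\vee g_k\}$ and satisfies $\frac{d}{dt}\sum_i u_i\le C\sum_i u_i$, so it is global and, by Cauchy--Lipschitz, unique. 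The limit being deterministic and independent of the subsequence, the full sequence $\big((U^{K,\cdot}),\mathbf{R}^K\big)$ converges in distribution, hence in probability (uniformly on $[0,T]$), to it; combining with the first step, where $\nu^K=\bar\nu^K$ on $[0,T]$ with probability tending to $1$, gives both conclusions.

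The main obstacle is not conceptual but organisational: one must verify that switching mutations off genuinely preserves the law up to $T^K_{\text{mut}}$ (this is exactly where the explicit Poisson representation \eqref{eq:constr-nu} is needed) and that every drift and bracket term is dominated uniformly in $K$ using only the a priori estimates of Proposition~\ref{prop:moments-nu} together with the deterministic bound $R^K_k\le R^*_k$. Once the support is pinned to the finite set $\{x_1,\dots,x_n\}$, the problem reduces to the elementary convergence of a finite-dimensional pure-jump-plus-drift Markov process towards the solution of a Lipschitz ODE, and no measure-valued machinery is really required.
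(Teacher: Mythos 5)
Your proposal is correct and follows essentially the same route as the paper: the first claim rests on the same mutation-rate estimate that underlies Corollary~\ref{lemma:not-too-many-mutation}, the problem is then reduced via the Poisson coupling to the mutation-free process, which is a finite-dimensional birth--death chain coupled with the resource ODE, and the convergence is the standard density-dependent law of large numbers that the paper delegates to Ethier--Kurtz, Ch.~11. The only (harmless) difference is that you carry out that last step yourself, via the semimartingale decomposition, Doob and Aldous--Rebolledo, and uniqueness for the limiting ODE, using the Yule domination and the deterministic bound $R^K_k\le R^*_k$ where the paper instead regularizes the locally Lipschitz rates outside a compact set.
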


\begin{proof}
   Since $K\mu_K\rightarrow 0$, the fact that $\lim_{K\rightarrow+\infty}\PP(T^K_{\text{mut}}<T)=0$ follows trivially from
  Corollary~\ref{lemma:not-too-many-mutation}. Therefore, in order to prove the second part of the result, it suffices to prove it for
  the population dynamics obtained by setting to zero the birth rates with mutation. In this case, the model reduces to a birth and
  death Markov chain in $\frac{1}{K}\mathbb{Z}_+$ for $(\langle\nu^K_t,\mathbbm{1}_{x_i}\rangle,1\leq i\leq n)_{t\geq 0}$, coupled
  with piecewise deterministic dynamics for the resources.

  \noindent Then the second part of the result can be proved using standard techniques from~\cite[Ch.~11]{EK86}. The
  only difficulty comes from the fact that the birth and death rates and the vector fields of the resources dynamics are only
  \emph{locally} Lipschitz functions of the state of the process. Since the limit function $(u_1(t),\ldots,u_n(t),R_1(t),\ldots,
  R_r(t))$ takes values in a compact set, the difficulty can be easily solved by regularizing the transition rates out of a
  sufficiently large compact set.
\end{proof}

\section{Asymptotic behavior of the deterministic chemostat system}
\label{sec:asympt-chemostat}

\subsection{Assumptions and statement of the results}
\label{sec:main-thm-chemostat}

In order to study the long time behavior of the system \eqref{system}, we need some additional assumptions.
\begin{align}
  &
  \begin{aligned}
    & \text{\it For any\ } x\in {\cal X},\quad 
  \sum_{k=1}^r\, \eta_{k}(x) g_{k} > d(x).
  \end{aligned}
  \label{A3} \\
  & \notag \\ &
  \begin{aligned}
    & \text{\it For any\ } n\geq 1 \text{\it\ and any distinct\ } x_{1}, \cdots, x_{n} \in {\cal X},  \text{\it\
      Equation~\eqref{eq:hypA4} has at most one\ } \\ &
    \text{\it solution\ }(u_{1}, \cdots, u_{n})\in\mathbb{R}_+^n, \text{\it\ where\ }
  \end{aligned}
  \label{A4}  \\ &
  \qquad\qquad\qquad\qquad\quad d(x_{i}) - \sum_{k=1}^r\,\frac{ \eta_{k}(x_i)  g_{k}}{1 + \sum_{j=1}^n \eta_{k}(x_{j}) u_{j}}=0\
  ,\quad   \, 1\leq i \leq n.
  \label{eq:hypA4}
\end{align}

% \emph{\be \label{A3}  && \hbox{ For any } x\in {\cal X},\quad 
%   \sum_{k=1}^r\, \eta_{k}(x) g_{k} > d(x).\ee}
%   and
%  \emph{\be&&
%  \hbox{  For any } n\geq 1 \hbox{  and any distinct } x_{1}, \cdots, x_{n} \in {\cal X},  \hbox{  the Equation } \eqref{eq:hypA4}  \hbox{  has at most one solution }\nonumber\\&&
%   (u_{1}, \cdots, u_{n})\in\mathbb{R}_+^n,  \hbox{ where } \label{A4}\ee}
% \begin{equation}
%   \label{eq:hypA4}
%   d(x_{i}) - \sum_{k=1}^r\,{ \eta_{k}(x_i)  g_{k} \over 1 + \sum_{j=1}^n \eta_{k}(x_{j}) u_{j}}=0\ ,\quad   \, 1\leq i \leq n.      
% \end{equation}

%In what follows, we will call (A) the set of assumptions (A1)--(A4).

\me Assumption \eqref{A3} means that when resources are maximal the population process is supercritical. Thus,  \eqref{A3} prevents the
individual-based model to become extinct too fast. It also ensures that the trivial equilibrium $(0, \cdots, 0, g_{1}, \cdots,
g_{n})$ of the deterministic system~\eqref{system} is unstable.

\me Since the equilibria $(\bar u, \bar R)$ of the chemostat system $\text{CH}(n,x_1,\ldots,x_n)$ are given by canceling the right
hand side of \eqref{system}, they must satisfy
$$
\bar R_{k } = \frac{g_{k}}{1 + \sum_{j=1}^n \eta_{k}(x_{j}) \bar u_{j}}
$$
and for all $i$ either $\bar u_{i}=0$ or $ d(x_{i})= \sum_{k=1}^r \eta_{k}(x_i) \bar R_{k }.$

\me Therefore Assumption  \eqref{A4} implies that, for all $I\subset\{1,2,\ldots,n\}$ there is at most one equilibrium $(\bar u,\bar
R)$ of~(\ref{system}) such that $\bar u_i=0$ for all $i\not\in I$ and $\bar u_i>0$ for all $i\in I$. In particular, we will make use
of the following consequence of~\eqref{A4}: if $(\bar u,\bar R)$ is an equilibrium of~(\ref{system}) and $v$ is a vector of
$\mathbb{R}_{+}^n$ such that $v_i=0$ implies $\bar u_i=0$ for all $1\leq i\leq n$ and for any $k$
$$
\sum_{j=1}^n \eta_{k}(x_{j})\bar u_{j}= \sum_{j=1}^n \eta_{k}(x_{j})v_{j},
$$
then $v=\bar u$.

\begin{prop}
  \label{prop:A4}
  \begin{description}
  \item[\textmd{(i)}] Assumption~\eqref{A4} is satisfied as soon as: For all distinct $ x_{1} ,\cdots,x_{r+1}\in {\cal X}$, the vectors
    \begin{equation}
      \label{indep1}
      \begin{pmatrix} \eta_{1}(x_{1})\\ \vdots \\ \eta_{1}(x_{r+1})\end{pmatrix} \ldots \begin{pmatrix} \eta_{r}(x_{1})\\ \vdots \\ \eta_{r}(x_{r+1})\end{pmatrix} , \begin{pmatrix} d(x_{1})\\ \vdots \\ d(x_{r+1})\end{pmatrix} 
    \end{equation}
    are linearly independent, and for all distinct $x_1,\ldots,x_r$, the vectors
    \begin{equation}\label{eq:cond-A4}
      \begin{pmatrix} \eta_{1}(x_{1})\\ \vdots \\ \eta_{1}(x_{r})\end{pmatrix} \ldots \begin{pmatrix} \eta_{r}(x_{1})\\ \vdots \\ \eta_{r}(x_{r})\end{pmatrix}
    \end{equation}
    are also linearly independent.
  \item[\textmd{(ii)}] Conversely, Assumption~\eqref{A4} implies that, for all distinct $x_1,\ldots,x_n$ such that~\eqref{eq:hypA4} admits
    a solution with all positive coordinates, the vectors
    \begin{equation}\label{eq:cond-A4-ii}
      \text{Vect}\left\{\begin{pmatrix} \eta_{1}(x_{1})\\ \vdots \\ \eta_{1}(x_{n})\end{pmatrix} \ldots \begin{pmatrix} \eta_{r}(x_{1})\\ \vdots \\ \eta_{r}(x_{n})\end{pmatrix}\right\}=\mathbb{R}^n
    \end{equation}
    are linearly independent. In particular,~\eqref{eq:hypA4} admits no solution in $(0,+\infty)^n$ if $n>r$.
  \end{description}
\end{prop}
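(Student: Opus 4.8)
The plan is to linearize Equation~\eqref{eq:hypA4} through a change of variables. For $u\in\mathbb{R}_+^n$, put $S_k(u)=\sum_{j=1}^n\eta_k(x_j)u_j\ge 0$; since $\eta_k>0$ by~\eqref{A1} the denominators $1+S_k(u)$ are $\ge 1$, and~\eqref{eq:hypA4} reads $d(x_i)=\sum_{k=1}^r\eta_k(x_i)\,y_k(u)$ for $1\le i\le n$, with $y_k(u):=g_k/(1+S_k(u))\in(0,g_k]$. Introduce the $n\times r$ matrix $A=(\eta_k(x_i))_{1\le i\le n,\,1\le k\le r}$ and the vector $b=(d(x_i))_{1\le i\le n}$; then any solution $u$ of~\eqref{eq:hypA4} yields a solution $y(u)\in(0,+\infty)^r$ of the \emph{linear} system $Ay=b$, while $u$ enters only through $S(u):=A^{\top}u=(S_k(u))_{1\le k\le r}$ (I identify the linear map $u\mapsto(S_k(u))_k$ with $A^{\top}$).

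For~(i) I would split according to the size of $n$. If $n\ge r+1$: a solution $u$ would give, restricting to $i\in\{1,\dots,r+1\}$, the identity $(d(x_1),\dots,d(x_{r+1}))=\sum_{k=1}^r y_k(u)\,(\eta_k(x_1),\dots,\eta_k(x_{r+1}))$, contradicting the linear independence~\eqref{indep1} for the distinct points $x_1,\dots,x_{r+1}$; hence~\eqref{eq:hypA4} has no solution at all, so at most one. If $n\le r$: I first observe that~\eqref{eq:cond-A4} forces $A$ to have full row rank $n$, i.e.\ $A^{\top}$ to be injective — a linear dependence of the $n$ rows of $A$, after enlarging $\{x_1,\dots,x_n\}$ into $r$ distinct points of $\mathcal{X}$ (possible as $\mathcal{X}$ is infinite) and completing with zero coefficients, would make the $r\times r$ matrix in~\eqref{eq:cond-A4} singular. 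Then, given two solutions $u,u'$, we have $y(u)-y(u')\in\ker A$ and $S(u)-S(u')=A^{\top}(u-u')\in\mathrm{Im}\,A^{\top}=(\ker A)^{\perp}$, hence
\[
0=\bigl\langle y(u)-y(u'),\,S(u)-S(u')\bigr\rangle=\sum_{k=1}^r\bigl(y_k(u)-y_k(u')\bigr)\bigl(S_k(u)-S_k(u')\bigr).
\]
Since $t\mapsto g_k/(1+t)$ is strictly decreasing, each summand is $\le 0$ and vanishes only when $S_k(u)=S_k(u')$; therefore $S(u)=S(u')$, i.e.\ $A^{\top}u=A^{\top}u'$, and injectivity of $A^{\top}$ gives $u=u'$.

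For~(ii) I would argue by contradiction: assume $x_1,\dots,x_n$ are distinct, \eqref{eq:hypA4} has a solution $u^{*}\in(0,+\infty)^n$, and the $r$ vectors of~\eqref{eq:cond-A4-ii} do not span $\mathbb{R}^n$. Then $A$ has rank $<n$, so $\ker A^{\top}\ne\{0\}$; pick $\lambda\in\ker A^{\top}$, $\lambda\ne 0$, i.e.\ $\sum_{i=1}^n\eta_k(x_i)\lambda_i=0$ for every $k$. For $|t|$ small enough $u^{*}+t\lambda$ still has all coordinates positive, and $S_k(u^{*}+t\lambda)=S_k(u^{*})$ for all $k$, so $u^{*}+t\lambda$ is again a solution of~\eqref{eq:hypA4}; for $t\ne 0$ it differs from $u^{*}$, contradicting~\eqref{A4}. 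Hence the vectors span $\mathbb{R}^n$ (equivalently, the $n$ rows $(\eta_1(x_i),\dots,\eta_r(x_i))$, $1\le i\le n$, are linearly independent in $\mathbb{R}^r$), which forces $n\le r$; in particular~\eqref{eq:hypA4} has no solution in $(0,+\infty)^n$ when $n>r$.

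The one step requiring care is the inequality in the case $n\le r$ of~(i): one has to combine the orthogonal splitting $\mathbb{R}^r=\ker A\oplus\mathrm{Im}\,A^{\top}$ with the monotonicity of $t\mapsto g_k/(1+t)$ to pass from ``$y(u)-y(u')$ orthogonal to $S(u)-S(u')$'' to ``$S(u)=S(u')$''. Everything else is routine linear algebra together with the change of variables above and direct use of~\eqref{indep1} and~\eqref{eq:cond-A4}.
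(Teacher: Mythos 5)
Your proposal is correct and follows essentially the same route as the paper: the case $n\ge r+1$ is ruled out by~\eqref{indep1} exactly as in the paper, the uniqueness for $n\le r$ rests on the same key fact (two solutions force $\sum_i\eta_k(x_i)(u_i-u_i')=0$ for every $k$, then full row rank from~\eqref{eq:cond-A4} gives $u=u'$), and part~(ii) uses the same perturbation $u^*+t\lambda$ along a kernel direction. Your orthogonality-plus-monotonicity phrasing of the middle step is just a repackaging of the paper's explicit weighted identity $\sum_k\frac{R_kR_k'}{g_k}\bigl(\sum_i\eta_k(x_i)(u_i-u_i')\bigr)^2=-\sum_i(u_i-u_i')\sum_k\eta_k(x_i)(R_k-R_k')=0$, with the added (welcome) detail of completing $n<r$ points to $r$ points to get injectivity of $A^{\top}$.
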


\begin{proof} 
Let us first assume that $n\geq r+1$ and fix $x_1,\ldots,x_n$ distinct. In view of \eqref{indep1}, the system of equations
$$
d(x_{i}) - \sum_{k=1}^r\, \eta_{k}(x_{i})R_k=0\, ; \, 1\leq i \leq n
$$
has no solution $(R_1,\ldots,R_r)$. Hence the system~(\ref{eq:hypA4}) has no solution.

\me If $n\leq r$, consider two solutions $u$ and $u'$ of (\ref{eq:hypA4}) and define the vector $R=(R_1,\cdots,R_r)$ by
\begin{equation}
  \label{R}
  R_k=\frac{g_k}{1+\sum_i \eta_k(x_i)u_i},\qquad\forall 1\leq k\leq r,
\end{equation}
and the vector $R'$ similarly in function of $u'$. Then
\begin{align*}
  & \sum_{k}\frac{R_kR'_k}{g_k}\left(\sum_i \eta_k(x_i)(u_i-u'_i)\right)^2 \\
  & \qquad\qquad =\sum_k \sum_j(u_j-u'_j)\frac{R_kR'_k}{g_k}\eta_k(x_j)
  \sum_i \eta_k(x_i)(u_i-u'_i) \\
  & \qquad\qquad =  \sum_k\left( \frac{R'_k} {g_k} \sum_j u_j\, \eta_k(x_j) R_k - \frac{R_k} {g_k} \sum_j u'_j\,\eta_k(x_j) R'_k\right) \sum_i \eta_k(x_i)(u_i-u'_i)\\
  & \qquad\qquad =-\sum_i(u_i-u'_i)\sum_k \eta_k(x_i)(R_k-R'_k),
\end{align*}
where we have used \eqref{R} for $(R,u)$ and $(R',u')$. Since $\sum_k \eta_k(x_i)R_k=\sum_k\eta_k(x_i)R'_k=d(x_i)$ for all $i$, the
previous quantity is 0. Therefore,
$$
\sum_i \eta_k(x_i)(u_i-u'_i)=0,\qquad\forall k\in\{1,\ldots,r\}.
$$
Since $n\leq r$, the vectors
$$
\begin{pmatrix} \eta_{1}(x_{1})\\ \vdots \\ \eta_{r}(x_{1})\end{pmatrix} \ldots \begin{pmatrix} \eta_{1}(x_{n})\\ \vdots \\
  \eta_{r}(x_{n})\end{pmatrix}
$$
are linearly independent under condition~(\ref{eq:cond-A4}), which ends the proof of \eqref{A4} by implying that $u_i=u_i',\ \forall i$.

\noindent Assuming that Point~(ii) does not hold, and letting $x_1,\ldots,x_n$ be distinct traits and $u_1,\ldots,u_n$ be a solution
to~(\ref{eq:hypA4}) such that
$$
\alpha_1\eta_k(x_1)+\ldots+\alpha_n\eta_k(x_n)=0
$$
for all $1\leq k\leq r$ and for some $(\alpha_1,\ldots,\alpha_n)\not=0$. Then, for $\varepsilon$ close enough to 0, the vector
$(v_1,\ldots,v_n)$ belongs to $(0,\infty)^n$ and is another solution of~(\ref{eq:hypA4}), where
$v_i=u_i+\varepsilon\alpha_i$.
\end{proof}

\noindent The two assumptions \eqref{A3}--\eqref{A4} imply our main result on the large time behavior of the chemostat systems of ODEs.

\begin{thm}
  \label{equilibrium}
  Assume \eqref{A3} and \eqref{A4}. For all $n\geq 1$ and all distinct $x_{1}, \cdots, x_{n}\in {\cal X}$, there exists $(\bar{u},\bar{R})$ in
  $(\mathbb{R}_{+})^{n+r}$ such that any solution $(u(t), R(t))$ of the system \eqref{system} with $u_{i}(0) >0$ for any $1\leq i\leq
  n$, converges to $(\bar{u},\bar{R})$. In addition, $(\bar{u},\bar{R})$ is the unique equilibrium of the system \eqref{system}
  satisfying, for all $i$ such that $\bar{u}_{i} = 0$, the (weak) stability condition
  \begin{equation}
    \label{stable}
    - d(x_{i}) + \sum_{k=1}^r \eta_{k}(x_{i})\, \bar R_{k} \leq 0.
  \end{equation}
\end{thm}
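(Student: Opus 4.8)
The strategy is to build a hierarchy of Lyapunov functionals, in the spirit of the classical single-resource chemostat Lyapunov function mentioned in the introduction (cf. \cite{SW}), but substantially extended to deal with $n$ traits and $r$ resources. The starting point is that solutions of \eqref{system} with $u_i(0)>0$ remain positive and stay in a compact set: indeed $R_k(t)\leq g_k\vee R_k(0)$ by the remark following \eqref{eq:EDO-R}, and the $u_i$ are bounded above because, once $R_k$ is below a threshold, $\dot u_i/u_i<0$; positivity of $u_i$ is immediate from the multiplicative form of the first equation. So the $\omega$-limit set is nonempty, compact, and contained in $\{u_i\geq 0,\ 0<R_k\leq g_k\}$.

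The core construction is a candidate Lyapunov functional. For a fixed target equilibrium $(\bar u,\bar R)$ of \eqref{system} satisfying the stability condition \eqref{stable}, I would consider a function of the form
\[
V(u,R)=\sum_{i:\,\bar u_i>0}c_i\Big(u_i-\bar u_i-\bar u_i\log\frac{u_i}{\bar u_i}\Big)+\sum_{i:\,\bar u_i=0}c_i u_i+\sum_{k=1}^r d_k\,\psi_k(R_k),
\]
with suitable positive weights $c_i,d_k$ and a convex function $\psi_k$ vanishing at $\bar R_k$ (e.g.\ $\psi_k(R)=R-\bar R_k-\bar R_k\log(R/\bar R_k)$ or a quadratic). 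Differentiating along trajectories and using the equilibrium relations $\bar R_k=g_k/(1+\sum_j\eta_k(x_j)\bar u_j)$ and $d(x_i)=\sum_k\eta_k(x_i)\bar R_k$ for $i$ with $\bar u_i>0$, the terms should reorganize — after choosing the weights to match the cross terms, exactly as in the classical chemostat computation — into a sum of manifestly nonpositive terms: a negative-definite quadratic form in the resource deviations $R_k-\bar R_k$ plus, crucially, terms $\sum_{i:\bar u_i=0}c_iu_i\big(-d(x_i)+\sum_k\eta_k(x_i)R_k\big)$ which are controlled using \eqref{stable} together with $R_k\le\bar R_k$ near the limit set. The delicate point is that the naive single-resource Lyapunov function does not close for $r\ge 2$; one needs the extra freedom in the $d_k$ and possibly an auxiliary functional (the paper announces "several Lyapunov functionals"), so I expect the real work is in finding the algebraic identity that makes $\dot V\le 0$ — this is the main obstacle.

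Once $\dot V\le0$ is established with equality only on the set where $R=\bar R$ and $u_i(\,\cdot\,)$ is constrained, LaSalle's invariance principle identifies the $\omega$-limit set: on an invariant subset of $\{\dot V=0\}$ one has $R_k\equiv\bar R_k$, hence $\dot R_k\equiv0$ forces $\sum_j\eta_k(x_j)u_j=\sum_j\eta_k(x_j)\bar u_j$ for all $k$, and $\dot u_i=u_i(-d(x_i)+\sum_k\eta_k(x_i)\bar R_k)$; for $i$ with $\bar u_i>0$ this vanishes, while for $i$ with $\bar u_i=0$ the bracket is $\le0$ by \eqref{stable}, so $u_i$ is nonincreasing and, being constant on the invariant set, the bracket must be $0$ or $u_i\equiv0$ — in either case $u$ is an equilibrium of \eqref{system} with the same $\eta$-moments as $\bar u$ and with $v_i=0\Rightarrow\bar u_i=0$, so by the consequence of \eqref{A4} stated just before Proposition~\ref{prop:A4}, $v=\bar u$. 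Thus the $\omega$-limit set is exactly $\{(\bar u,\bar R)\}$ and every such trajectory converges to it. Existence of an equilibrium satisfying \eqref{stable} follows by a separate fixed-point / degree or compactness argument (or by running the flow from an interior point and using the just-proved convergence of bounded trajectories together with boundedness of $V$), and uniqueness of the stable equilibrium is immediate from \eqref{A4}: two equilibria both satisfying \eqref{stable} would, by the convergence statement applied with appropriate initial data, have to coincide, or one argues directly that \eqref{A4} pins down the support $I=\{i:\bar u_i>0\}$ and the values on it. I would finally remark that Assumption~\eqref{A3} enters to guarantee the nontrivial equilibrium is the relevant one (the trivial equilibrium $(0,\dots,0,g_1,\dots,g_r)$ fails \eqref{stable}), so that starting from $u_i(0)>0$ we never converge to extinction of the whole population.
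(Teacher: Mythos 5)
Your proposal identifies the right architecture (a relative-entropy-type Lyapunov functional, a LaSalle-type invariance argument, and Assumption~\eqref{A4} to pin down the limit), but the heart of the proof is missing and is explicitly deferred: you write that finding the algebraic identity making $\dot V\le 0$ is ``the main obstacle.'' That identity is precisely what the paper supplies, and without it your identification of $\{\dot V=0\}$ (on which the whole invariance step rests) is unsupported. In fact your guess that ``the naive single-resource Lyapunov function does not close for $r\ge2$'' is not right: with unit weights $c_i=d_k=1$ and $\psi_k(R)=R-\bar R_k\log R$, i.e.\ $G(u,R)=\sum_i(u_i-\bar u_i\log u_i)+\sum_k(R_k-\bar R_k\log R_k)$, the cross terms in $u$ and $R$ cancel exactly and one gets $\frac{d}{dt}G\le-\beta\sum_{i\notin I}u_i-\sum_k\frac{(R_k-\bar R_k)^2}{R_k}\bigl(1+\sum_i\eta_k(x_i)\bar u_i\bigr)$ with $I=\{i:\sum_k\eta_k(x_i)\bar R_k=d(x_i)\}$; the functional is degenerate (its dissipation does not see $u_i-\bar u_i$ for $i\in I$), and the paper removes this degeneracy by adding the cross term $\tilde G(u,R)=\sum_k(R_k-\bar R_k)\sum_i\eta_k(x_i)(u_i-\bar u_i)$ with a small weight $\gamma$, yielding a coercive decay for $H=G+\gamma\tilde G$ that also serves later in the large-deviation estimates. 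Your LaSalle step also has a local flaw: $u_i$ need not be ``constant on the invariant set''; the correct argument is that on the (compact, invariant) $\omega$-limit set with $R\equiv\bar R$, a coordinate with strictly negative bracket would blow up backwards in time unless $u_i\equiv 0$ --- fixable, but again it presupposes the unproved structure of $\{\dot V=0\}$.

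The second genuine gap concerns the equilibrium itself. Your convergence argument is built around a target $(\bar u,\bar R)$ satisfying~\eqref{stable}, so its existence must be established first; the suggestions you give are either vague (degree/fixed-point) or circular (``running the flow and using the just-proved convergence''). The paper constructs $(\bar u,\bar R)$ in a separate first step as the unique global minimizer on $\mathbb{R}_+^n$ of the convex functional $F(u)=\sum_i d(x_i)u_i-\sum_k g_k\log\bigl(1+\sum_j\eta_k(x_j)u_j\bigr)$, whose first-order (KKT) conditions are exactly~\eqref{stable}; uniqueness of the minimizer, and then uniqueness of the equilibrium satisfying~\eqref{stable}, require a genuinely nontrivial argument (a symmetrized two-equilibria computation combined with~\eqref{A4}), not the ``immediate'' consequence you claim: \eqref{A4} only gives uniqueness of coexistence densities for a \emph{given} support, it does not by itself determine which support carries the stable equilibrium, and applying the convergence statement to a second stable equilibrium fails when that equilibrium has zero coordinates (it is then not an admissible initial condition). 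Finally, your boundedness argument for the $u_i$ is loose as stated; it is true, but should be justified e.g.\ via $\frac{d}{dt}\bigl(\sum_i u_i+\sum_k R_k\bigr)\le\sum_k g_k-\min(\underline d,1)\bigl(\sum_i u_i+\sum_k R_k\bigr)$ or via the coercivity of $G$.
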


\noindent Before giving the proof, we define two notions of great importance in the sequel. Writing $\mathbf{x}=(x_1,\ldots,x_n)$, we
denote by
$$
\mathbf{\bar u}(\mathbf{x})=(\bar u_1(x_1,\ldots,x_n),\ldots,\bar u_n(x_1,\ldots,x_n))
$$
the vector of equilibrium densities of the previous theorem and by 
$$
\mathbf{\bar R}(\mathbf{x})=(\bar R_1(x_1,\ldots,x_n),\ldots, \bar R_r(x_1,\ldots,x_n))
$$
the corresponding equilibrium resources concentrations.

\begin{defi}
  \label{def:coex-fitn}
  We say that the traits $x_{1}, \cdots, x_{n}$ coexist if the quantities $\bar{u}_{1}(\mathbf{x}),\ldots,\bar{u}_n(\mathbf{x})$ are
  all positive, where $\mathbf{x}=(x_1,\ldots,x_n)$. For all $n\geq 1$, we denote by ${\cal D}_n$  the domain of coexistence of $n$
  traits:
  $$
  {\cal D}_n=\{(x_1,\ldots,x_n)\in{\cal X}^n:x_1,\ldots,x_n\text{\ coexist}\}.
  $$
  Note that ${\cal D}_1={\cal X}$.\\
  For distinct traits $x_1,\ldots,x_n$, we also define the invasion fitness of a new trait $y$ as the function
  \begin{equation}
    \label{fitness}
    f(y; x_{1}, \cdots, x_{n})= -d(y) + \sum_{k=1}^r \eta_{k}(y)\, \bar
    R_{k}.
  \end{equation}
\end{defi}

\begin{rem}
  \label{rem:comment-A4}
  We can interpret Assumption~\eqref{A4} and Proposition~\ref{prop:A4} with this vocabulary:~\eqref{A4} implies that there
  exists at most a single vector of population densities where $x_1,\ldots,x_n$ coexist for all distinct $x_1,\ldots,x_n\in{\cal X}$,
  and Proposition~\textup{\ref{prop:A4}~(ii)} means that when $x_1,\ldots,x_n$ coexist, we must have $n\leq r$. In particular, ${\cal D}_n=\emptyset$ if $n>r$.
\end{rem}
If the trait $y$ is interpreted as a mutant trait trying to invade the resident populations of
coexisting traits $x_1,\ldots,x_n$, the terminology of fitness refers to the possibility of
invasion of the mutant trait. Indeed, it follows from Theorem~\ref{equilibrium} that $\bar
u(x_1,\ldots,x_n,y)=(\bar u_1(\mathbf{x}),\ldots,\bar u_n(\mathbf{x}),0)$ iff
$f(y;x_1,\ldots,x_n)\leq 0$, and that if $f(y;x_1,\ldots,x_n)>0$, the last equilibrium is
locally unstable, since the Jacobian matrix of the system at this point obviously admits
$f(y;x_1,\ldots,x_n)$ as eigenvalue.

\subsection{Proof of Theorem~\ref{equilibrium}}
\label{sec:pf-CH}

The proof of this results was sketched in~\cite{CJR10}. We shall give here a complete and detailed proof. The idea is to construct a
Lyapunov functional for the system~(\ref{system}) in three steps.

\paragraph{Step 1. Lyapunov functional and stable equilibrium for a reduced system:}

We consider the quasi-stable approximation of the system~(\ref{system}) obtained by putting at each time $t$ the resources at the
equilibrium associated with the population densities at time $t$.
\begin{equation}
  \label{eq:system2}
  \dot{u}_{i}=u_{i}\left(-d(x_{i}) + \sum_{k=1}^r
    \frac{\eta_{k}(x_{i})\, g_k}{1+\sum_{j=1}^n\eta_k(x_j)u_j}\right),\qquad\forall i\in\{1,\ldots,n\}.
\end{equation}
This system admits the Lyapunov functional 
\begin{equation}
  \label{Lyapounov}
  F(u_1,\ldots,u_n)=\sum_i d(x_i)u_i-\sum_k g_k\log\left(1+\sum_j \eta_k(x_j)u_j\right).
\end{equation}
One easily checks that
$$
\frac{d}{dt}F(u(t))=-\sum_i u_i(t)\left(-d(x_i)+\sum_k\frac{g_k\, \eta_k(x_i)}{1+\sum_j\eta_k(x_j)u_j(t)}\right)^2.
$$
This is a strict Lyapunov functional since $dF(u(t))/dt\leq 0$, and $dF(u(t))/dt=0$ if and only if $u(t)$ is an equilibrium of the
system~(\ref{eq:system2}). Clearly,
$$
F(u)\rightarrow+\infty\quad\text{when}\quad |u|\rightarrow+\infty\text{\ with\
}u\in\mathbb{R}_+^n,
$$
since $d(x)>0$ for all $x\in{\cal X}$. Therefore, for any initial condition $u(0)$, the solution $u(t)$ of~\eqref{eq:system2}
converges to an equilibrium when $t\rightarrow+\infty$. In addition, $F$ is a convex function as
\[
\frac{\partial^2 F}{\partial u_i\partial u_j}=\sum_k \frac{g_k}{\left(1+\sum_l \eta_k (x_l)\,u_l\right)^2}\;\eta_k(x_i)\,\eta_k(x_j).
\]
Hence
\begin{equation}
  \label{eq:convexite-F}
  \sum_{ij} \frac{\partial^2 F}{\partial u_i\partial u_j}\;\xi_i\,\xi_j=
  \sum_k \frac{g_k}{\left(1+\sum_l \eta_k (x_l)\,u_l\right)^2}\;\left(\sum_i \eta_k(x_i)\,\xi_i\right)^2\geq 0,
\end{equation}
thus the convexity. In general $F$ is not strictly convex as~(\ref{eq:convexite-F}) can vanish for nonzero $\xi$ if $n>r$. However we
can still prove that there is a unique local minimum. Indeed, consider any critical point $\mathbf{u}$ of $F$. Define $I$ the set of
indices $i$ s.t. $u_i>0$. Then for any $i\in I$ one has that
\[
d(x_i)-\sum_k \frac{g_k\, \eta_k(x_i)}{1+\sum_j \eta_k(x_j)\,u_j}=0
\] 
We denote $\RR^I$ the subspace of $\RR^n$ defined by $\xi\in \RR^I$ iff $\xi_i=0$ for any $i\not\in I$. At the point $\mathbf{u}$,
$F$ is strictly convex in $\RR^I$. Indeed if not, one could find $\xi\in \RR^I$ s.t. $\sum_{i} \eta_k(x_i)\xi_i=0$ for any $k$.

\noindent On the other hand for $\varepsilon$ small enough $\mathbf{u}+\varepsilon \xi$ also belongs to $\RR^n_+$ and one also has
that
\[
d(x_i)-\sum_k \frac{g_k\, \eta_k(x_i)}{1+\sum_j \eta_k(x_j)\,(u_j+\varepsilon \xi_j)}=0,
\]
which would violate the Assumptions \eqref{A1} and\eqref{A4}. This shows that if $\mathbf{u}$ is a local minimum of $F$ then no other
local minima may exist on $\RR^I$, as $F$ is convex over $\RR^I$ and strictly convex at $\mathbf{u}$.
 
\noindent Since $F(u)\rightarrow+\infty$ when $|u|\rightarrow+\infty$ with $u\in\mathbb{R}_+^n$, $F$ has at least one global minimum.
Choose a global minimum $\mathbf{u}$ such that $|I|$ is the largest and assume that there exists another one $\mathbf{u'}$, defining
another set $I'$. Since $F$ is strictly convex in $\mathbb{R}^I$, one necessarily has that $I'\not\subset I$. In addition by the
convexity of $F$ any $\mathbf{u^\theta}=\theta\mathbf{u}+(1-\theta)\mathbf{u'}$ is also a global minimum. However for $\theta\in (0,\
1)$, one has that $u^\theta_i>0$ for any $i\in I\cup I'$ which is strictly larger than $I$. Hence we obtain a contradiction and $F$
has a unique global minimum.

\me Therefore, $F$ admits a unique global minimizer in the closed, convex set $\mathbb{R}_+^n$, denoted $\bar u$. Let us denote by
$\bar R$ the vector with coordinates
$$
\bar R_k=\frac{g_k}{1+\sum_{i=1}^n\eta_k(x_i)\bar u_i},\qquad 1\leq k\leq r.
$$
Then, the vector $(\bar u,\bar R)$ is an equilibrium of~(\ref{system}). Since $\bar u$ is a global minimum of $F$ on
$\mathbb{R}_+^n$, for any $i$ such that $\bar u_i=0$, one must have $\frac{\partial F}{\partial u_i}\geq 0$. This
yields~(\ref{stable}).

\me Let us check that $(\bar u,\bar R)$ is the only equilibrium of~(\ref{system}) satisfying this property. This is equivalent to
checking that $\bar u$ is the only equilibrium of~(\ref{eq:system2}) such that
$$
- d(x_{i}) + \sum_{k=1}^r \frac{g_k\, \eta_{k}(x_{i})}{1+\sum_j\eta_k(x_j)\bar u_j} \leq 0
$$
for all $i$ such that $\bar u_i=0$. Since $\bar u$ is an equilibrium of~(\ref{eq:system2}), note also that
$$
- d(x_{i}) + \sum_{k=1}^r \frac{g_k\,\eta_{k}(x_{i})}{1+\sum_j\eta_k(x_j)\bar u_j} = 0
$$
for all $i$ such that $\bar u_i>0$.

\me Let us consider two such equilibria, $\bar u^1\not=\bar u^2$. Then, adapting the computation for the convexity of $F$
\begin{align*}
  0 & \geq\sum_i \bar u^1_i\left(-d(x_i)+\sum_k\frac{g_k\, \eta_k(x_i)}{1+\sum_j\eta_k(x_j)\bar
      u^2_i}\right) \\
  & \qquad+\sum_i \bar u^2_i\left(-d(x_i)+\sum_k\frac{g_k\, \eta_k(x_i)}{1+\sum_j\eta_k(x_j)\bar
      u^1_i}\right) \\
  & =\sum_i (\bar u^1_i-\bar
  u^2_i)\left(-d(x_i)+\sum_k\frac{g_k\, \eta_k(x_i)}{1+\sum_j\eta_k(x_j)\bar u^2_i}\right) \\
  & \qquad+\sum_i (\bar u^2_i-\bar
  u^1_i)\left(-d(x_i)+\sum_k\frac{g_k\,\eta_k(x_i)}{1+\sum_j\eta_k(x_j)\bar u^1_i}\right) \\
  & = \sum_k\frac{g_k}{(1+\sum_j \eta_k(x_j)\bar u^1_j)(1+\sum_j \eta_k(x_j)\bar u^2_j)}
  \left(\sum_i \eta_k(x_i)(\bar u^2_i-\bar u^1_i)\right)^2.
\end{align*}
This cannot hold unless
$$
\sum_i \eta_k(x_i)(\bar u^2_i-\bar u^1_i)=0,\qquad\forall 1\leq k\leq r.
$$
But this would imply that $\bar R^1=\bar R^2$ (with obvious notations). Defining $J:=\{i:\sum_k\eta_k(x_i)\bar R^1_k=d(x_i)\}$, since
$\bar u^1$ and $\bar u^2$ are equilibria of~(\ref{eq:system2}), we would then have $\bar u^1_i=\bar u^2_i=0$ for all $i\not\in J$,
and we would obtain a contradiction with Assumption~\eqref{A4} applied to $(x_i)_{i\in J}$. Hence $(\bar u,\bar R)$ is the only equilibrium
of~(\ref{system}) satisfying~(\ref{stable}).

\paragraph{Step 2. A degenerate Lyapunov functional for the system~(\ref{system}):}

Let us fix $(u(0),R(0))\in(0,+\infty)^{n+r}$ and consider the solution $(u(t),R(t))$ of~(\ref{system}).

\me We define
\begin{equation}
  \label{eq:def-G}
  G(u_1,\ldots,u_n,R_1,\ldots,R_r)=\sum_{i=1}^n (u_i-\bar u_i\log u_i)
  +\sum_{k=1}^r(R_k-\bar R_k\log R_k).
\end{equation}
Then, for any solution of~(\ref{system}) with $(n(0),R(0))\in(0,+\infty)^{n+r}$, one has
\[
\begin{split}
  \frac{d}{dt}G(u(t),R(t)) & =\sum_i (u_i-\bar u_i)\left(-d(x_i)+\sum_k\eta_k(x_i) R_k\right)\notag\\
&\quad  +\sum_k\frac{R_k-\bar R_k}{R_k}\left(g_k-R_k(1+\sum_i\eta_k(x_i)u_i)\right), \notag \\
\end{split}
\]
or 
\begin{equation}
\begin{split}
 \frac{d}{dt}G(u(t),R(t))  & =\sum_{i}(u_i-\bar u_i)\sum_k \eta_k(x_i)(R_k-\bar R_k)+\sum_i (u_i-\bar
  u_i)\left(-d(x_i)+\sum_k\eta_k(x_i)\bar R_k\right) \\
  & \qquad +\sum_k \frac{R_k-\bar R_k}{R_k}\left(g_k-R_k(1+\sum_i \eta_k(x_i)\bar u_i)\right)\\
&\qquad  -\sum_k(R_k-\bar R_k)\sum_i \eta_k(x_i)(u_i-\bar u_i). \end{split}\label{eq:calcul-calcul}
\end{equation}
Equation~(\ref{stable}) implies that the second term in the r.h.s.\ is non-positive. Therefore,
\begin{align}
  \frac{d}{dt}G(u(t),R(t)) & \leq
  \sum_k \frac{R_k-\bar R_k}{R_k}(\bar R_k-R_k)\left(1+\sum_i \eta_k(x_i)\bar u_i\right)\notag\\
&\qquad  +\sum_k \frac{R_k-\bar R_k}{R_k}\left(g_k-\bar R_k(1+\sum_i \eta_k(x_i)\bar u_i)\right)
  \notag \\
  & \leq-\sum_k\frac{(R_k-\bar R_k)^2}{R_k}\left(1+\sum_i \eta_k(x_i)\bar
    u_i\right), \label{eq:calcul2}
\end{align}
since the second term of the r.h.s. is zero. Therefore, $G$ is a Lyapunov functional for the system~(\ref{system}), degenerate in the
sense that, in view of~(\ref{eq:calcul-calcul}), its derivative could vanish when $R_k=\bar R_k$ but $u_i\not=\bar u_i$ for some
$i$.

\me Note that $G$ is convex (strictly in the $R_k$ and in the $u_i$ for which $\bar u_i\neq 0$) and $G(u,R)\rightarrow+\infty$
when $|u|+|R|\rightarrow+\infty$ in $\mathbb{R}_+^{n+r}$. As a consequence, the function $u(t)$ is bounded, say by $\bar U$.

\me Since $\dot{R}_k\leq g_k-R_k$, $R_k(t)$ is uniformly bounded in time, and by Lyapunov's Theorem, $R(t)\rightarrow \bar{R}$ when
$t\rightarrow+\infty$ and thus
$$
\alpha:=\inf_{t\geq 0}\inf_{k}R_k(t)>0.
$$
Now, let us define
$$
I:=\{i:\sum_k\eta_k(x_i)\bar R_k=d(x_i)\}.
$$
Note that if $\bar u_i>0$ then $i\in I$ but that $I$ could be larger. Since $(\bar u,\bar R)$ satisfies~(\ref{stable}),
$-d(x_i)+\sum_k\eta_k(x_i)\bar R_k<0$ for all $i\not\in I$, i.e.
% and thus $\dot{u}_i\leq-\varepsilon u_i$ for $t$ large enough, for some constant $\varepsilon>0$, since $R_k(t)\rightarrow\bar R_k$.
% Therefore, $u_i(t)\rightarrow 0=\bar u_i$ when $t\rightarrow+\infty$ for all $i\not\in I$.
%
% \me Finally denoting
\[
\beta:=\inf_{i\not\in I} \left(d(x_i)-\sum_k \eta_k(x_i)\,\bar R_k\right)>0.
\]
Then, using \eqref{eq:calcul-calcul}, one can find a more precise expression than \eqref{eq:calcul2}
\begin{equation}
  \label{eq:calcul3}
  \frac{d}{dt}G(u(t),R(t)) \leq-\beta\sum_{i\not\in I} u_i-\sum_k\frac{(R_k-\bar R_k)^2}{R_k}\left(1+\sum_i \eta_k(x_i)\bar
    u_i\right).
\end{equation}

\paragraph{Step 3. Second Lyapunov functional for the system~(\ref{system}):}

We can partly correct the problem that $G$ is a ``degenerate'' Lyapunov functional by slightly modifying it: let $\gamma$ be a small
positive number and define
$$
H(u,R)=G(u,R)+\gamma\tilde{G}(u,R),
$$
where
\begin{equation}
  \label{eq:def-tilde-G}
  \tilde{G}(u,R)=\sum_k(R_k-\bar R_k)\sum_i\eta_k(x_i)(u_i-\bar u_i).  
\end{equation}
Then
\begin{align}
 \frac{d}{dt}\tilde{G}(u(t),R(t))  & =\sum_k\left(g_k-R_k(1+\sum_j \eta_k(x_j)\bar u_j)\right)\sum_i \eta_k(x_i)(u_i-\bar u_i)
  \notag\\ & \qquad-\sum_k R_k\left(\sum_i\eta_k(x_i)(u_i-\bar u_i)\right)^2 \notag \\ 
  & \qquad+\sum_k (R_k-\bar R_k)\sum_i\eta_k(x_i)u_i\left(-d(x_i)+\sum_l \eta_l(x_i)\bar
    R_l\right) \notag 
  \\ & \qquad+\sum_i u_i\left(\sum_k\eta_k(x_i)(R_k-\bar R_k)\right)^2. \label{eq:calcul}
\end{align}
Since $(\bar u, \bar R)$ is an equilibrium of~(\ref{system}), the first term in the r.h.s.\ is equal to
$$
\sum_k \frac{g_k(\bar R_k-R_k)}{\bar R_k}\sum_i \eta_k(x_i)(u_i-\bar u_i) \leq \sum_k\left[\frac{g_k^2(\bar R_k-R_k)^2}{2\bar R_k^2
    R_k}+\frac{R_k}{2}\left(\sum_i\eta_k(x_i)(u_i-\bar u_i)\right)^2\right].
$$
It follows from the definition of the set $I$ that the third term of the r.h.s.\ of~(\ref{eq:calcul}) is equal to
$$
\sum_k (R_k-\bar R_k)\sum_{i\not\in I}\eta_k(x_i)u_i\left(-d(x_i)+\sum_l \eta_l(x_i)\bar R_l\right)\leq C\sum_{i\not\in I}u_i,
$$
where the constant $C$ depends only on uniform upper bound for the functions $R_k(t)$, $\eta_k(x)$ and $d(x)$. Finally, using
Cauchy-Schwartz inequality and the boundedness of $u$, the last term of the r.h.s.\ of~(\ref{eq:calcul}) is bounded by
$$
C'\sum_k\frac{(R_k-\bar R_k)^2}{R_k},
$$
where the constant $C'$ depends only on $\bar U$ and uniform upper bounds for $R_k(t)$ and $\eta_k(x)$.

\noindent Combining all these inequalities with~(\ref{eq:calcul3}), we obtain
\begin{align}
  \frac{d}{dt}H(u(t),R(t)) & \leq-\left(1-\gamma\frac{\sup_k g_k^2}{2\alpha^2}-\gamma
    C'\right)\sum_k\frac{(R_k-\bar R_k)^2}{R_k} \notag \\ &
  \qquad-\frac{\gamma}{2}\sum_k R_k\left(\sum_i\eta_k(x_i)(u_i-\bar u_i)\right)^2-(\beta-\gamma
  C)\sum_{i\not\in I}u_i \notag \\
  & \leq-\frac{1}{2}\sum_k\frac{(R_k-\bar R_k)^2}{R_k}-\frac{\gamma}{2}\sum_k R_k\left(\sum_i\eta_k(x_i)(u_i-\bar u_i)\right)^2-\frac{\beta}{2}
  \sum_{i\not\in I}u_i, \label{eq:last-eq}
\end{align}
if $\gamma$ is small enough. Lyapunov's Theorem then entails that the set of accumulation points of $(u(t),R(t))$ when
$t\rightarrow+\infty$ is contained in the sub-manifold $M$ of $\mathbb{R}_+^{n+r}$ defined as the set of
$(v_1,\ldots,v_n,S_1,\ldots,S_r)\in\mathbb{R}_+^{n+r}$ satisfying
\begin{gather*}
  S_k=\bar R_k,\qquad\forall 1\leq k\leq r \\
  \sum_{i=1}^n\eta_k(x_i)(v_i-\bar u_i)=0,\qquad\forall 1\leq k\leq r, \\
  v_i=0,\qquad\forall i\not\in I.
\end{gather*}
Since $\bar u_i=0$ for all $i\not\in I$, this sub-manifold $M$ contains the point $(\bar u,\bar R)$. Since $(\bar u,\bar R)$ is an
equilibrium, the second system of equations above is equivalent to
$$
\frac{g_k}{1+\sum_i\eta_k(x_i)v_i}=\frac{g_k}{1+\sum_i\eta_k(x_i)\bar u_i}=
\bar{R_k},\qquad\forall 1\leq k\leq r.
$$
Therefore Assumption~\eqref{A4} applied to the vector of traits $(x_i)_{i\in I}$ implies that $v_i=\bar u_i$ for all $i\in I$. Hence
$M$ is reduced to the point $(\bar u,\bar R)$ and the proof of Theorem~\ref{equilibrium} is completed.

\subsection{Some examples}
\label{sec:ex}

\subsubsection{The monomorphic case}
\label{sec:monomorphic}

In case $n=1$, we consider a unique trait $x$. The equilibrium $\bar u(x)$ defined in Theorem \ref{equilibrium} satisfies
\begin{equation}
  \label{eq:eq-mono}
  \sum_{k=1}^r \frac{\eta_{k}(x)\, g_{k}}{1+ \eta_{k}(x)\, \bar u(x)} = d(x).
\end{equation}
Remark that the left hand side is a decreasing function of $\bar u(x)$, and so there is a unique solution to this equation.
Therefore by the Implicit Function Theorem and Assumption \eqref{A1}, the function $\bar u(x)$ is a $C^2$-function.

\noindent The resources at equilibrium are thus given by
$$
\bar R_{k}(x)= \frac{ g_{k}}{1+ \eta_{k}(x)\, \bar u(x)}.
$$

\subsubsection{The dimorphic case}
\label{sec:dimorphic}

In case where $n=2$, we consider two distinct traits $x_{1}$ and $x_{2}$. Then the fitness function defined in \eqref{fitness} is
given by
\begin{equation}
  \label{fitness2}
  f(x_{2};x_{1}) = - d(x_{2}) + \sum_{k=1}^r \frac{\eta_{k}(x_{2})\,g_{k}}{1+ \eta_{k}(x_{1})\,\bar u(x_{1})}.
\end{equation}
It is immediate to observe that $f$ is $C^2$ with respect to $(x_1,x_2)$.

\noindent The system $\text{CH}(2,x_1,x_2)$ has two obvious equilibria: $E_{1}= (\bar u(x_{1}), 0, \bar R(x_{1}))$ and $E_{2}=(0,
\bar u(x_{2}), \bar R(x_{2}))$. In view of \eqref{stable}, $E_{1}$ is the equilibrium given by Theorem \ref{equilibrium} if and only
if $f(x_{2}; x_{1}) \leq 0$ and a similar condition for $E_{2}$ (and we cannot have both $f(x_2;x_1)\leq 0$ and $f(x_1;x_2)\leq 0$).
If we have $f(x_{2}; x_{1})> 0$ and $f(x_{1}; x_{2})> 0$, the equilibrium of Theorem \ref{equilibrium} must have positive density
coordinates $(\bar u_{1}(x_{1}, x_{2}), \bar u_{2}(x_{1}, x_{2}))$. Therefore, we have the following result.

\begin{prop}
  \label{prop:dimorph}
  For any $x_1\not=x_2$ in ${\cal X}$, the traits $x_1$ and $x_2$ coexist if and only if $f(x_1;x_2)>0$ and $f(x_2;x_1)>0$.
  
  \noindent Moreover the vector $(\bar u_{1}(x_{1}, x_{2}), \bar u_{2}(x_{1}, x_{2}))$ is solution of the system
  \begin{align}
    \label{equilibre2}
    & d(x_{1}) =  \sum_{k=1}^r
    \frac{\eta_{k}(x_{1})\,g_k}{1+\eta_k(x_1)\bar u_1+\eta_k(x_2)\bar u_2}, \\
    & d(x_{2}) =  \sum_{k=1}^r
    \frac{\eta_{k}(x_{2})\,g_k}{1+\eta_k(x_1)\bar u_1+\eta_k(x_2)\bar u_2}.
  \end{align}
\end{prop}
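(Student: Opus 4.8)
The plan is to reduce everything to Theorem~\ref{equilibrium} together with the discussion preceding the proposition. First I would record that every single trait has a strictly positive monomorphic equilibrium: the left-hand side of~\eqref{eq:eq-mono} equals $\sum_k \eta_k(x)g_k$ at $\bar u(x)=0$, which exceeds $d(x)$ by~\eqref{A3}, tends to $0$ as $\bar u(x)\to+\infty$, and is decreasing, so its unique root $\bar u(x)$ is positive. Consequently the two boundary equilibria $E_1=(\bar u(x_1),0,\bar R(x_1))$ and $E_2=(0,\bar u(x_2),\bar R(x_2))$ of $\text{CH}(2,x_1,x_2)$ are genuinely distinct, and the trivial equilibrium $(0,0,g_1,\ldots,g_r)$ violates~\eqref{stable} by~\eqref{A3}. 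Moreover, evaluating~\eqref{stable} at $E_1$ (only the index $i=2$, where the density vanishes, matters) gives exactly $f(x_2;x_1)\le 0$, and likewise at $E_2$ it gives $f(x_1;x_2)\le 0$.

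Then I would run the case analysis. Since Theorem~\ref{equilibrium} supplies a \emph{unique} equilibrium satisfying~\eqref{stable} and $E_1\neq E_2$, one cannot have both $f(x_2;x_1)\le 0$ and $f(x_1;x_2)\le 0$. If $f(x_2;x_1)>0$ and $f(x_1;x_2)>0$, then the equilibrium $(\bar u,\bar R)$ of Theorem~\ref{equilibrium} is none of $E_1$, $E_2$, or the trivial one; since the only equilibria of~\eqref{system} with a vanishing density coordinate are those three (because $\bar u_i=0,\ \bar u_j>0$ with $i\neq j$ forces $\bar u_j=\bar u(x_j)$ by~\eqref{eq:eq-mono}), it must satisfy $\bar u_1(x_1,x_2)>0$ and $\bar u_2(x_1,x_2)>0$, i.e.\ coexistence. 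Conversely, if, say, $f(x_2;x_1)\le 0$, then $E_1$ satisfies~\eqref{stable}, so by the uniqueness part of Theorem~\ref{equilibrium} we get $(\bar u(x_1,x_2),\bar R(x_1,x_2))=E_1$ and $\bar u_2(x_1,x_2)=0$, ruling out coexistence; the case $f(x_1;x_2)\le 0$ is symmetric. Combining the two directions gives the stated equivalence.

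For the last assertion, on the coexistence region $\bar u_1,\bar u_2>0$ I would simply cancel the right-hand side of~\eqref{system}: the equations $\dot R_k=0$ give $\bar R_k=g_k/(1+\eta_k(x_1)\bar u_1+\eta_k(x_2)\bar u_2)$, and inserting this into $\dot u_i=0$, which reads $-d(x_i)+\sum_k\eta_k(x_i)\bar R_k=0$ since $\bar u_i>0$, yields~\eqref{equilibre2}.

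I do not expect a serious obstacle: the proposition is essentially a bookkeeping corollary of Theorem~\ref{equilibrium}. The one point worth a line of care is the exclusion of simultaneous non-invadability (both fitnesses $\le 0$), which relies on $E_1\neq E_2$ and hence on strict positivity of the monomorphic equilibria via~\eqref{A3}; and, in the first implication, on the elementary fact (already used in the text) that a boundary equilibrium of~\eqref{system} with one density zero and the other positive must coincide with the corresponding monomorphic equilibrium.
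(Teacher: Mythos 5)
Your proof is correct and follows essentially the same route as the paper, which derives the proposition from Theorem~\ref{equilibrium} via the two boundary equilibria $E_1$, $E_2$, the identification of the stability condition~\eqref{stable} with the signs of $f(x_2;x_1)$ and $f(x_1;x_2)$, and the uniqueness of the stable equilibrium. Your added details (positivity of the monomorphic equilibria via~\eqref{A3} and the classification of boundary equilibria) only make explicit what the paper leaves implicit.
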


\subsubsection{The $n$-morphic case}
\label{sec:n-morphic}

When the population is composed of $n$ distinct traits $x_1,\ldots,x_n$, the case of coexistence can be characterized by induction
over $n$ by negating the stability condition~(\ref{stable}) of Theorem~\ref{equilibrium} for each trivial equilibrium of the system
$\text{CH}(n,x_1,\ldots,x_n)$. By trivial equilibrium, we mean any equilibrium with at least one null coordinate for population
densities.

\me In the case where $\mathbf{x}=(x_1,\ldots,x_n)$ coexist, the equilibrium densities satisfy
\begin{equation}
  \label{eq:n-morph-eq}
  d(x_i)=\sum_{k=1}^r\frac{g_k\eta_k(x_i)}{1+\sum_{j=1}^n\eta_k(x_j)\bar u_j(\mathbf{x})},
  \quad\forall i\in\{1,\ldots,n\},
\end{equation}
and the resources concentrations at equilibrium are given by
\begin{equation}
  \label{eq:n-morph-res-eq}
  \bar{R}_k(\mathbf{x})=\frac{g_k}{1+\sum_{j=1}^n\eta_k(x_j)\bar u_j(\mathbf{x})},
  \quad\forall k\in\{1,\ldots,r\}.
\end{equation}

\me We have the following regularity result for the function $\bar{\mathbf{u}}$ on ${\cal D}_n$.
\begin{lem}
  \label{lem:bar-u-continue}
  For all $n\geq 1$, the function $\bar{\mathbf{u}}$ is $C^2$ and bounded on ${\cal D}_n$. In addition, ${\cal D}_n$ is an open
  subset of ${\cal X}^n$.
\end{lem}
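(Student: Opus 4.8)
The plan is to prove the result by applying the Implicit Function Theorem to the system \eqref{eq:n-morph-eq}, viewed as an equation for $\bar{\mathbf{u}}$ in terms of $\mathbf{x}$. Fix $\mathbf{x}^0=(x_1^0,\ldots,x_n^0)\in{\cal D}_n$ and set $\bar{\mathbf{u}}^0=\bar{\mathbf{u}}(\mathbf{x}^0)$, which by definition of coexistence lies in $(0,+\infty)^n$. Define $\Phi:{\cal X}^n\times(0,+\infty)^n\to\mathbb{R}^n$ by
\[
\Phi_i(\mathbf{x},\mathbf{u})=d(x_i)-\sum_{k=1}^r\frac{g_k\eta_k(x_i)}{1+\sum_{j=1}^n\eta_k(x_j)u_j},\qquad 1\leq i\leq n,
\]
so that $\Phi(\mathbf{x}^0,\bar{\mathbf{u}}^0)=0$. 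By Assumption~\eqref{A1} the functions $\eta_k$ and $d$ are $C^2$, and since the denominators $1+\sum_j\eta_k(x_j)u_j$ are bounded away from zero near $(\mathbf{x}^0,\bar{\mathbf{u}}^0)$, $\Phi$ is $C^2$ on a neighborhood. The main point is to check that $D_{\mathbf{u}}\Phi(\mathbf{x}^0,\bar{\mathbf{u}}^0)$ is invertible; granting this, the IFT yields a $C^2$ map $\mathbf{x}\mapsto\mathbf{u}(\mathbf{x})$ solving $\Phi(\mathbf{x},\mathbf{u}(\mathbf{x}))=0$ near $\mathbf{x}^0$, with $\mathbf{u}(\mathbf{x})\in(0,+\infty)^n$ on a possibly smaller neighborhood by continuity. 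By Assumption~\eqref{A4}, this solution with all positive coordinates is unique, hence coincides with $\bar{\mathbf{u}}(\mathbf{x})$; this simultaneously shows $\bar{\mathbf{u}}$ is $C^2$ near $\mathbf{x}^0$ and that $\mathbf{x}^0$ is interior to ${\cal D}_n$, so ${\cal D}_n$ is open.

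The invertibility of the Jacobian is the crucial step. Writing $\bar R_k=g_k/(1+\sum_j\eta_k(x_j^0)\bar u_j^0)$, a direct computation gives
\[
\frac{\partial\Phi_i}{\partial u_j}(\mathbf{x}^0,\bar{\mathbf{u}}^0)=\sum_{k=1}^r\frac{g_k\eta_k(x_i^0)\eta_k(x_j^0)}{(1+\sum_l\eta_k(x_l^0)\bar u_l^0)^2}=\sum_{k=1}^r\frac{\bar R_k^2}{g_k}\eta_k(x_i^0)\eta_k(x_j^0).
\]
This is exactly (up to the positive weights $\bar R_k^2/g_k$) the Gram-type matrix appearing in the Hessian computation~\eqref{eq:convexite-F}, and for $\xi\in\mathbb{R}^n$,
\[
\xi^\top D_{\mathbf{u}}\Phi\,\xi=\sum_{k=1}^r\frac{\bar R_k^2}{g_k}\Bigl(\sum_{i=1}^n\eta_k(x_i^0)\xi_i\Bigr)^2\geq 0,
\]
with equality iff $\sum_i\eta_k(x_i^0)\xi_i=0$ for all $k$. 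By Proposition~\ref{prop:A4}~(ii), since $\mathbf{x}^0\in{\cal D}_n$ the equation~\eqref{eq:hypA4} has a solution with all positive coordinates, so the vectors $(\eta_k(x_1^0),\ldots,\eta_k(x_n^0))_{1\leq k\leq r}$ span $\mathbb{R}^n$; equivalently no nonzero $\xi$ is orthogonal to all of them. Hence $\xi^\top D_{\mathbf{u}}\Phi\,\xi>0$ for $\xi\neq 0$, so $D_{\mathbf{u}}\Phi$ is symmetric positive definite and in particular invertible.

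Finally, boundedness of $\bar{\mathbf{u}}$ on ${\cal D}_n$ follows without any further work from Step~2 of the proof of Theorem~\ref{equilibrium}: for fixed $\mathbf{x}$, every coordinate $\bar u_i(\mathbf{x})$ is a coordinate of the equilibrium toward which the solution of~\eqref{system} converges, and $\bar u_i$ is bounded by $\sum_k\eta_k^* g_k/\underline d$ directly from~\eqref{eq:n-morph-eq} since $d(x_i)\geq\underline d>0$ and each summand on the right is at most $\eta_k^* g_k/(1+\eta_k(x_i)\bar u_i)$, forcing $\bar u_i\leq (\sum_k\eta_k^*g_k)/\underline d$. I expect the Jacobian nondegeneracy to be the only real obstacle, and it is dispatched cleanly by Proposition~\ref{prop:A4}~(ii); everything else is a routine application of the Implicit Function Theorem together with Assumption~\eqref{A4} for the identification of the local solution with $\bar{\mathbf{u}}$.
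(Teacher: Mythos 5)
Your proof of the $C^2$ regularity and of the openness of ${\cal D}_n$ is essentially the paper's: the same implicit-function-theorem setup with the same Gram-type Jacobian $\sum_k \frac{g_k\eta_k(x_i)\eta_k(x_j)}{(1+\sum_l\eta_k(x_l)u_l)^2}$, whose positive definiteness is obtained from Proposition~\ref{prop:A4}~(ii) exactly as in the paper. One small remark there: to conclude that nearby points $\mathbf{x}$ lie in ${\cal D}_n$, the cleanest justification is not Assumption~\eqref{A4} alone (which only gives uniqueness of nonnegative solutions of~\eqref{eq:hypA4}) but the uniqueness statement of Theorem~\ref{equilibrium}: the positive IFT solution yields an interior equilibrium of~\eqref{system}, which vacuously satisfies the stability condition~\eqref{stable} and hence must coincide with $\bar{\mathbf{u}}(\mathbf{x})$; the paper is equally terse on this point, so this is not a gap. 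Where you genuinely depart from the paper is the boundedness: the paper deduces it from the characterization of $\bar{\mathbf{u}}(\mathbf{x})$ as the global minimizer of the Lyapunov functional~\eqref{Lyapounov}, which has the advantage of bounding $\bar{\mathbf{u}}$ uniformly on all of ${\cal X}^n$ (not only on ${\cal D}_n$), while you read the bound directly off the equilibrium equations~\eqref{eq:n-morph-eq}, which is simpler and suffices for the lemma as stated. Your argument is correct in substance, but the constant is slightly off as written: from $\underline d\leq d(x_i)=\sum_k \frac{g_k\eta_k(x_i)}{1+\sum_j\eta_k(x_j)\bar u_j}\leq\sum_k\frac{g_k\eta_k(x_i)}{\eta_k(x_i)\bar u_i}$ one gets $\bar u_i\leq \big(\sum_k g_k\big)/\underline d$ (keeping $\eta_k(x_i)$ in the numerator so that it cancels), whereas after replacing the numerator by $\eta_k^*$ you would also need the positive lower bound on $\eta_k$ from~\eqref{A1} in the denominator; either fix yields the required uniform bound on ${\cal D}_n$.
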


\begin{proof}
  Let us first prove that $\bar{\mathbf{u}}$ is $C^2$ on $\mathcal{D}_n$. For $\mathbf{x}=(x_1,\ldots,x_n)\in{\cal D}_n$, the
  equilibrium population densities $\bar u_i(\mathbf{x})$ are the (unique, by Assumption~\eqref{A4}) solutions of~(\ref{eq:n-morph-eq}).
  Defining the function $F=(F_1,\ldots,F_n)$ from ${\cal X}^n\times\RR^n$ to $\RR^n$ by
  $$
  F_i(x_1,\ldots,x_n,u_1,\ldots,u_n)=d(x_i)
  -\sum_{k=1}^r\frac{g_k\eta_k(x_i)}{1+\sum_{j=1}^n\eta_k(x_j)u_i},
  \quad\forall i\in\{1,\ldots,n\},
  $$
  we see that $\bar{\mathbf{u}}(\mathbf{x})$ is characterized by the equation $F(\mathbf{x},\bar{\mathbf{u}}(\mathbf{x}))=0$. The
  Jacobian matrix of $F$ with respect to $(u_1,\ldots,u_n)$ at $(\mathbf{x},\mathbf{u})=(x_1,\ldots,x_n,u_1,\ldots,u_n)$ is given by
  $$
  J_{\mathbf{u}} F(\mathbf{x},\mathbf{u})=\left(\sum_{k=1}^r
    \frac{g_k\eta_k(x_i)\eta_k(x_j)}{\left(1+\sum_j \eta_k(x_j)u_j\right)^2}\right)_{1\leq
    i,j\leq n}.
  $$
  For all vector $v\in\RR^n$,
  $$
  v^* J_{\mathbf{u}} F(\mathbf{x},\mathbf{u}) v=\sum_{k=1}^r\frac{g_k}{\left(1+\sum_j
      \eta_k(x_j)u_j\right)^2}\left[\sum_{i=1}^n v_i\eta_k(x_i)\right]^2.
  $$
  Hence the matrix $J_{\mathbf{u}} F(\mathbf{x},\mathbf{u})$ is invertible if and only if the orthogonal vector space of the family
  of vectors $\{\eta_1(\mathbf{x}),\ldots,\eta_r(\mathbf{x})\}$ in $\RR^n$ is $\{0\}$, where
  $\eta_k(\mathbf{x})=(\eta_k(x_1),\ldots,\eta_k(x_n))$. This is equivalent to the fact that the vector space spanned by
  $\eta_1(\mathbf{x}),\ldots,\eta_r(\mathbf{x})$ is $\RR^n$. This is implied by~(\ref{eq:cond-A4-ii}), which is itself a consequence
  of Assumption~\eqref{A4}  by Proposition~\ref{prop:A4}. Hence the fact that $\bar{\mathbf{u}}$ is $C^2$ on $\mathcal{D}_n$ follows from
  the Implicit Functions Theorem, as well as the fact that $\mathcal{D}_n$ is open.

  \noindent Let us now prove that $\bar{\mathbf{u}}$ is bounded on $\mathcal{D}_n$. This property for all $n\geq 1$ is equivalent to the fact
  that the function $\bar{\mathbf{u}}$ is bounded on $\mathcal{X}^n$ for all $n\geq 1$. We shall prove this last point. In the proof
  of Theorem~\ref{equilibrium}, the equilibrium $\mathbf{\bar u}(\mathbf{x})$ is characterized as the unique global minimizer on
  $\mathbb{R}_+^{n}$ of the function
  $$
  (u_1,\ldots,u_{n})\mapsto
  F(x_1,\ldots,x_{n},u_1,\ldots,u_{n}):=\sum_{i=1}^{n}d(x_i)u_i-\sum_{k=1}^r
  g_k\log\left(1+\sum_{j=1}^{n}\eta_k(x_j)u_j\right).
  $$
  Now $F(x_1,\ldots,x_{n},0,\ldots,0)=0$ for all $\mathbf{x}\in{\cal X}^{n}$ and
  $$
  F(x_1,\ldots,x_{n},u_1,\ldots,u_{n})\geq\underline{d}(u_1+\ldots+u_{n})-r\bar
  g\log(1+\bar\eta(u_1+\ldots+u_{n})),
  $$
  where $\underline{d}:=\min_{x\in{\cal X}}d(x)>0$, $\bar{g}:=\sup_{1\leq k\leq r}g_k$ and $\bar\eta:=\sup_k\|\eta_k\|_\infty$.
  Therefore, the function $\mathbf{\bar u}(\mathbf{x})$ is uniformly bounded on ${\cal X}^{n}$, and Lemma~\ref{lem:bar-u-continue} is
  proved.
\end{proof}

\noindent The coexistence criterion of Proposition~\ref{prop:dimorph} extends as follows to the $n$-morphic case.
\begin{prop}
  \label{prop:n-morph}
  For any $\mathbf{x}\in{\cal D}_n$ and $y\in{\cal X}$ such that for some $i\in\{1,\ldots,n\}$,\\
  $(x_1,\ldots,x_{i-1},y,x_{i+1},\ldots,x_n)\in{\cal D}_n$ (this holds for example if $y$ is
  close enough to $x_i$). Then $(\mathbf{x},y)=(x_1,\ldots,x_n,y)\in{\cal D}_{n+1}$ if and
  only if $f(y;\mathbf{x})>0$ and $ f(x_i;x_1,\ldots,x_{i-1},y,x_{i+1},\ldots,x_n)>0. $
  % In this case, if $(x_1,\ldots,x_{j-1},y,x_{j+1},\ldots,x_n)\in{\cal D}_n$ for some $j\not=i$, then one must also have
  % $$
  % f(x_j;x_1,\ldots,x_{j-1},y,x_{j+1},\ldots,x_n)>0.
  % $$
\end{prop}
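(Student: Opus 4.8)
The plan is to realise $(\mathbf x,y)\in\mathcal D_{n+1}$ as the statement that the equilibrium $(\bar v,\bar S):=(\bar{\mathbf u}(x_1,\dots,x_n,y),\bar{\mathbf R}(x_1,\dots,x_n,y))$ furnished by Theorem~\ref{equilibrium} for $\text{CH}(n+1,x_1,\dots,x_n,y)$ has all its $n+1$ density coordinates strictly positive, and to decide this coordinate by coordinate. Three facts will be used throughout: this equilibrium depends only on the \emph{set} $\{x_1,\dots,x_n,y\}$ (invariance of~\eqref{system} under permutations of the traits) and is the \emph{unique} equilibrium of $\text{CH}(n+1,\dots)$ satisfying the stability conditions~\eqref{stable}; if $P$ denotes the set of traits on which $\bar v$ is positive, then for any trait-subset $T$ with $P\subseteq T\subseteq\{x_1,\dots,x_n,y\}$ the restriction $(\bar v|_T,\bar S)$ is an equilibrium of $\text{CH}(T)$ still satisfying~\eqref{stable}, hence equals $(\bar{\mathbf u}(T),\bar{\mathbf R}(T))$; and, by the remark following Definition~\ref{def:coex-fitn}, adjoining a trait $z$ to a coexisting family $\mathbf z$ yields $\bar u_z=0$ at the new equilibrium iff $f(z;\mathbf z)\le 0$.

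For the ``only if'' direction: if $(\mathbf x,y)\in\mathcal D_{n+1}$ then $\bar v_y>0$, which with $\mathbf x\in\mathcal D_n$ forces $f(y;\mathbf x)>0$; and $\bar v_{x_i}>0$, which, viewing the same equilibrium as the one obtained by adjoining the trait $x_i$ to the coexisting family $\mathbf x^{(i)}:=(x_1,\dots,x_{i-1},y,x_{i+1},\dots,x_n)\in\mathcal D_n$, forces $f(x_i;\mathbf x^{(i)})>0$.

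For the ``if'' direction, assume $f(y;\mathbf x)>0$ and $f(x_i;\mathbf x^{(i)})>0$ and suppose for contradiction that $P\subsetneq\{x_1,\dots,x_n,y\}$. If $y\notin P$, take $T=\{x_1,\dots,x_n\}$: since $\mathbf x\in\mathcal D_n$, $(\bar{\mathbf u}(T),\bar{\mathbf R}(T))=(\bar{\mathbf u}(\mathbf x),\bar{\mathbf R}(\mathbf x))$ has all coordinates positive, so in fact $P=\{x_1,\dots,x_n\}$ and $\bar S=\bar{\mathbf R}(\mathbf x)$; then~\eqref{stable} at the coordinate $y$ reads $f(y;\mathbf x)\le 0$, a contradiction. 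If $x_i\notin P$, the same argument with $T$ the trait set of $\mathbf x^{(i)}$ gives $f(x_i;\mathbf x^{(i)})\le 0$, again a contradiction. Hence $y\in P$ and $x_i\in P$, so $P$ omits some resident $x_{j_0}$ with $j_0\ne i$.

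This remaining configuration — $\bar v_{x_{j_0}}=0$ for some $j_0\ne i$ while $\bar v_y,\bar v_{x_i}>0$ — is the heart of the matter, because $P$ is then contained neither in the trait set of $\mathbf x$ nor in that of $\mathbf x^{(i)}$, so the uniqueness argument above no longer applies. The natural tool to exclude it is the algebraic identity from the proof of Proposition~\ref{prop:A4}, $\sum_k\frac{R_kR'_k}{g_k}\bigl(\sum_z\eta_k(z)(v_z-v'_z)\bigr)^2=-\sum_z(v_z-v'_z)\sum_k\eta_k(z)(R_k-R'_k)$, valid whenever $R_k=g_k/(1+\sum_z\eta_k(z)v_z)$ and $R'_k=g_k/(1+\sum_z\eta_k(z)v'_z)$, applied over the $n+1$ traits first to $(\bar v,\bar S)$ and $(\bar{\mathbf u}(\mathbf x),0,\bar{\mathbf R}(\mathbf x))$, then to $(\bar v,\bar S)$ and $(\bar{\mathbf u}(\mathbf x^{(i)}),0,\bar{\mathbf R}(\mathbf x^{(i)}))$: in the right-hand sides the contributions of the traits of $P$ vanish by the equilibrium relations, the contributions of residents of zero density are controlled by the \emph{non-positive} numbers $f(x_j;P)$, and the contributions of $y$ and of $x_i$ are controlled by the \emph{strictly positive} $f(y;\mathbf x)$ and $f(x_i;\mathbf x^{(i)})$; one then wants to combine the two identities with Assumption~\eqref{A4} applied to $P$ and to the relevant $n$-trait subsets to reach a contradiction. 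This is the genuinely delicate step, where Assumption~\eqref{A4} and the signs of the two fitnesses must interact, and I expect it to be the main obstacle. In the case most used in the sequel, $y$ close to $x_i$, the claim can instead be obtained from the openness of $\mathcal D_n$ and the continuity of $\bar{\mathbf u}$ (Lemma~\ref{lem:bar-u-continue}), which keep $\bar v_{x_j}$ close to $\bar u_j(\mathbf x)>0$ for every $j\ne i$, so that only the two easy cases above remain to be treated.
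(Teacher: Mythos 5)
Your ``only if'' direction and the first two cases of your ``if'' direction are correct and coincide in substance with the paper's argument: the paper works with the Lyapunov functional $F$ from the proof of Theorem~\ref{equilibrium}, observes that the minimizers of $F$ on the faces $\{u_{n+1}=0\}$ and $\{u_i=0\}$ are $(\bar{\mathbf u}(\mathbf x),0)$ and the reindexed $\bar{\mathbf u}(x_1,\dots,x_{i-1},y,x_{i+1},\dots,x_n)$, and that the derivatives of $F$ in the missing directions there equal $-f(y;\mathbf x)<0$ and $-f(x_i;\dots)<0$, so the global minimizer $\bar{\mathbf u}(\mathbf x,y)$ lies on neither of these two faces; this is exactly your exclusion of ``$y\notin P$'' and ``$x_i\notin P$'' via the restriction-plus-uniqueness argument. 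The genuine gap is the third configuration that you yourself flag ($\bar v_y>0$, $\bar v_{x_i}>0$, but $\bar v_{x_{j_0}}=0$ for some $j_0\neq i$): you only sketch a hoped-for use of the identity from the proof of Proposition~\ref{prop:A4} and do not carry it out, and your fallback for $y$ close to $x_i$ appeals to continuity of $\bar{\mathbf u}(\mathbf x,y)$ as $y\to x_i$, which is not covered by Lemma~\ref{lem:bar-u-continue} (the limit point has a repeated coordinate and need not lie in $\mathcal D_{n+1}$), so that route is also incomplete as written.

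Moreover, this remaining case cannot be excluded in general, so no completion along the lines you propose can work for arbitrary $y$ and $n\geq 2$: take $r=3$, $n=2$, with $x_1$ and $y$ two ``specialists'' and $x_2$ a ``generalist'', e.g.\ $g_k\equiv 1$, $\eta(x_1)\approx(1,\epsilon,\epsilon)$, $\eta(x_2)\approx(1,\epsilon,1)$, $\eta(y)\approx(\epsilon,\epsilon,1)$, $d(x_1)=d(y)=1/2$, $d(x_2)=1.2$, with $\epsilon$ small and the values perturbed generically so that~\eqref{A4} holds. One checks that $(x_1,x_2)$, $(y,x_2)$ and $(x_1,y)$ each coexist, that $f(y;x_1,x_2)>0$ and $f(x_1;y,x_2)>0$, yet $f(x_2;x_1,y)<0$; hence by the uniqueness statement of Theorem~\ref{equilibrium} the equilibrium $\bar{\mathbf u}(x_1,x_2,y)$ is $\bigl(\bar u_1(x_1,y),0,\bar u_2(x_1,y)\bigr)$ and $(\mathbf x,y)\notin\mathcal D_{3}$, although both fitness conditions of the proposition hold with $i=1$. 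In fairness, the paper's own proof stops after excluding the two faces $\{u_{n+1}=0\}$ and $\{u_i=0\}$ and then asserts full support, so it omits exactly the case you isolated; the statement is safe for $n=1$ (where it reduces to Proposition~\ref{prop:dimorph} and your two cases are exhaustive) and in the local regime $y$ near $x_i$ in which it is used later in the paper, but there the missing ingredient is precisely a proof that the densities of the residents $x_j$, $j\neq i$, remain bounded away from $0$, in the spirit of Proposition~\ref{prop:sign-fitn-2}~(ii), which neither you nor the paper supplies at this point.
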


\begin{proof}
  In the first step of the proof of Theorem~\ref{equilibrium}, $\mathbf{\bar u}(\mathbf{x},y)$ was characterized as the unique global
  minimizer of the functional $F$ defined in~(\ref{Lyapounov}). By Theorem~\ref{equilibrium}, we also know that the unique global
  minimizer of $F$ on ${\cal X}^n\times\{0\}$ is $\mathbf{u}^*_1:=(\mathbf{\bar u}(\mathbf{x}),0)$ and the unique global minimizer of
  $F$ on ${\cal X}^{i-1}\times\{0\}\times{\cal X}^{n-i+1}$ is
  $$
  \mathbf{u}^*_2:=\Big(\bar u_1(\mathbf{x}^*),\ldots,\bar u_{i-1}(\mathbf{x}^*),0,\bar u_{i+1}(\mathbf{x}^*),\ldots,\bar
  u_n(\mathbf{x}^*),\bar u_i(\mathbf{x}^*)\Big),
  $$
  where $\mathbf{x}^*=(x_1,\ldots,x_{i-1},y,x_{i+1},\ldots,x_n)$. Now, since the derivative of $F$ with respect to the $n+1$-th
  coordinate at $\mathbf{u}^*_1$ is
  $$
  -f(y;\mathbf{x})<0
  $$
  and the derivative of $F$ with respect to the $i$-th coordinate at $\mathbf{u}^*_2$ is
  $$
  -f(x_i;x_1,\ldots,x_{i-1},y,x_{i+1},\ldots,x_n)<0,
  $$
  the unique global minimizer of $F$ must have positive coordinates, which is the definition of $(\mathbf{x},y)\in{\cal D}_{n+1}$.
\end{proof}

% \noindent We end this section with the following simple result which shows that a mutant that can invade a population must survive in
% the equilibrium state. This shows that the phenomenon known as ``the resident strikes back'' in adaptive dynamics (see
% e.g.~\cite{Geritz05}) cannot happen under our assumptions.

% \begin{prop}
%   \label{prop:survie-mutant}
%   Assume that $\mathbf{x}=(x_1,\ldots,x_n)\in\mathcal{D}_n$ and that $y\in\mathcal{X}$ satisfies $f(y;x)>0$. Then $\bar
%   u_{n+1}(x_1,\ldots,x_n,y)>0$.
% \end{prop}

% \begin{proof}
%   Again, $\bar{\mathbf{u}}(x_1,\ldots,x_n,y)$ is characterized as the unique global minimizer on $\mathbb{R}_+^{n+1}$ of
%   $$
%   F(u_1,\ldots,u_{n+1}):=\sum_{i=1}^{n}d(x_i)u_i+d(y)u_{n+1}-\sum_{k=1}^r
%   g_k\log\left(1+\sum_{j=1}^{n}\eta_k(x_j)u_j+\eta_k(y)u_{n+1}\right).
%   $$
%   Since $x_1,\ldots,x_n$ coexist, the unique global minimizer of $F$ on $\mathbb{R}_+^n\times\{0\}$ is
%   $(\bar{\mathbf{u}}(\mathbf{x}),0)$ (with some abuse of notation). In addition,
%   $\bar{\mathbf{u}}(x_1,\ldots,x_n,y)\not=(\bar{\mathbf{u}}(\mathbf{x}),0)$ since $f(y;\mathbf{x})>0$. Therefore, the global
%   minimizer of $F$ on $\mathbb{R}_+^{n+1}$ cannot belong to $\mathbb{R}_+^n\times\{0\}$.
% \end{proof}

\section{Long time stability of the stochastic process}
\label{sec:GD}

Theorem~\ref{equilibrium} gives the asymptotic behavior of the deterministic chemostat
systems. When $K$ is large, one can see the individual-based model as a stochastic
perturbation of the deterministic chemostat system. One may then wonder to what extent this
perturbation modifies the long time stability of the system. The next result answers this
question and is going to be useful in Section~\ref{sec:PES}. 

\me
Let $x_1,\ldots,x_n$ be distinct points in ${\cal X}$ and $\varepsilon >0$ and denote by ${\cal M}$ the set of finite measures on ${\cal X}$. We define
$$
B_{\varepsilon}(\mathbf{x})= \Big\{(\nu,\mathbf{R})\in {\cal M}\times \mathbb{R}^r:
Supp(\nu)\subset \{x_1,\ldots,x_n\}, \ \forall i\  |\langle
\nu,\mathbf{1}_{\{x_i\}}\rangle-\bar u_i({\mathbf{x}})|<\varepsilon,
\|\mathbf{R}-\bar{\mathbf{R}}({\mathbf{x}})\|\leq \varepsilon\Big\}
$$

\begin{thm}
  \label{thm:GD} Let $x_1,\ldots,x_n$ be distinct points in ${\cal X}$. Assume that
  $\nu^K_0=\sum_{i=1}^n u^{K,i}_0\delta_{x_i}$ such that $(\mathbf{u}^{K}_0,\mathbf{R}^K(0))$
  a.s. belongs for $K$ large enough to a compact subset $S$ of
  $(0,+\infty)^n\times\mathbb{R}^r$.
  \begin{description}
  \item[\textmd{(i)}] For all $\varepsilon >0$, there exist $T_{\varepsilon},
    V_{\varepsilon}>0$ such that
    \begin{equation*}
      \lim_{{K\to \infty}}\mathbb{P}\Big(\forall t \in [ T_{\varepsilon}, e^{V_\varepsilon K}\wedge
      T^K_{{mut}}] ,\; (\nu^K_{t},\mathbf{R}^K(t))\in B_{\varepsilon}(\mathbf{x})\Big) = 1.
    \end{equation*}
  \item[\textmd{(ii)}] Define $I= \{i: \ \bar u_i({\mathbf{x}})=0\}$ and $T^K_{ext} =
    \inf\{t\geq 0: \ \forall i\in I,\ \langle \nu_t^{K},\mathbf{1}_{\{x_i\}}\rangle=0\} $.
    Assume that for all $i\in I$, $f(x_{i}, \mathbf{x})<0$. Then for all $\delta>0$, 
    \begin{equation*}
      \lim_{{K\to \infty}}\mathbb{P}(T^K_{ext}\wedge T^K_{{mut}}\leq (a+\delta)\log K) = 1,
    \end{equation*}
    where $a:= \sup_{i\in I} (1/|f(x_{i}, \mathbf{x})|)$.
  \end{description}
\end{thm}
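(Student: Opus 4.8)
The plan is to combine the fluid-limit result (Theorem~\ref{largepop}), the long-time stability of the deterministic chemostat system of Section~\ref{sec:asympt-chemostat}, and a Freidlin--Wentzell type estimate for the jump process, the novelty being that the resources are not part of the noisy system but evolve by a deterministic ODE slaved to the (stochastic) population, so the large-deviation input has to be fed through the Lyapunov functionals of Section~\ref{sec:asympt-chemostat} rather than through an off-the-shelf LDP.

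\emph{Part (i).} First, by Theorem~\ref{largepop} and Theorem~\ref{equilibrium} there is a deterministic time $T$ such that, with probability tending to $1$, no mutation has occurred (Corollary~\ref{lemma:not-too-many-mutation} and~\eqref{echmut}) and $(\nu^K_{T},\mathbf R^K(T))$ lies in an arbitrarily small neighbourhood of $(\bar{\mathbf u}(\mathbf x),\bar{\mathbf R}(\mathbf x))$; until the first mutation the process lives in the finite-dimensional state space of $\mathrm{CH}(n,x_1,\ldots,x_n)$. From there I would run an exponential-martingale argument built on the (degenerate) Lyapunov functional $H=G+\gamma\tilde G$ of the proof of Theorem~\ref{equilibrium}. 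Put $\psi=H-H(\bar{\mathbf u},\bar{\mathbf R})$; since $H$ is nonincreasing along the flow of~\eqref{system} and converges to $H(\bar{\mathbf u},\bar{\mathbf R})$, one gets $\psi\ge 0$, $\psi>0$ off the equilibrium on $B_\varepsilon(\mathbf x)$, hence $\psi\ge\psi_\varepsilon>0$ on $\partial B_\varepsilon(\mathbf x)$. Consider $\phi_\theta=\exp(\theta K\psi)$ and expand $L^K\phi_\theta$ in the jump size $1/K$: the first-order term is exactly $\theta K\,\frac{d}{dt}H|_{\mathrm{det}}\le-\theta K\,W$ — this is where the resource drift $g_k-R_k(1+\langle\nu,\eta_k\rangle)$ gets absorbed, $W\ge 0$ being the dissipation of~\eqref{eq:last-eq} — while the second-order term is $\tfrac{\theta^2}{2}K\sum_i u_i(\partial_{u_i}\psi)^2(\cdot)$. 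Using Assumption~\eqref{A4} and compactness (as in the last step of the proof of Theorem~\ref{equilibrium}, where $W=0$ forces $(\mathbf u,\mathbf R)=(\bar{\mathbf u},\bar{\mathbf R})$) one shows $W$ controls $\sum_i u_i(\partial_{u_i}\psi)^2$ near the equilibrium with a $W^\alpha$-type modulus, $\alpha<1$; hence for $\theta$ small $L^K\phi_\theta\le 0$ on $B_\varepsilon(\mathbf x)$ outside a small ball $\{\psi<\rho(\theta)\}$, and $L^K\phi_\theta\le CK\phi_\theta$ inside it. Optional stopping at the exit time $\tau$ of $B_\varepsilon(\mathbf x)$ gives $\mathbb P(\tau\le (T+t)\wedge T^K_{\mathrm{mut}})\le e^{-\theta K(\psi_\varepsilon-\psi')}+CKt\,e^{-\theta K(\psi_\varepsilon-\rho(\theta))}$, with $\psi'<\psi_\varepsilon$ the supremum of $\psi$ over the initial small neighbourhood, which tends to $0$ for $t=e^{V_\varepsilon K}$ with $V_\varepsilon=\tfrac{\theta}{2}(\psi_\varepsilon-\rho(\theta))>0$.

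\emph{Part (ii).} Feed (i) into a subcriticality argument. On the event of (i) (probability $\to1$) the process stays in $B_\varepsilon(\mathbf x)$ on $[T,e^{V_\varepsilon K}\wedge T^K_{\mathrm{mut}}]$, so $\|\mathbf R^K(t)-\bar{\mathbf R}(\mathbf x)\|\le\varepsilon$ there. For $i\in I$ one has $\bar u_i(\mathbf x)=0$ and, by hypothesis, $f(x_i,\mathbf x)<0$, so for $\varepsilon$ small the per-capita birth rate of trait $x_i$ on $B_\varepsilon(\mathbf x)$ is $\le d(x_i)-|f(x_i,\mathbf x)|+c\varepsilon$, strictly below $d(x_i)$. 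Before $T^K_{\mathrm{mut}}$ no individual of trait $x_i$ is created by mutation, so $N^K_i(t)=K\langle\nu^K_t,\mathbf 1_{\{x_i\}}\rangle$ is, through the Poisson construction~\eqref{eq:constr-nu}, stochastically dominated by a linear birth--death process with per-capita killing rate $|f(x_i,\mathbf x)|-c\varepsilon$ started from $N^K_i(T)<\varepsilon K$. A first-moment estimate then gives that this process, hence $N^K_i$, is extinct by time $T+\frac{(1+\delta')\log K}{|f(x_i,\mathbf x)|-c\varepsilon}$ with probability $\to1$. Taking the maximum over the finitely many $i\in I$ and letting $\varepsilon,\delta'\to0$ (and using $(a+\delta)\log K\ll e^{V_\varepsilon K}$) yields $T^K_{\mathrm{ext}}\le(a+\delta)\log K$ on the event of (i), so $\mathbb P(T^K_{\mathrm{ext}}\wedge T^K_{\mathrm{mut}}\le(a+\delta)\log K)\to1$; the $\wedge\,T^K_{\mathrm{mut}}$ makes this robust to the size of $T^K_{\mathrm{mut}}$.

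\emph{Main obstacle.} The delicate point is the exponential-martingale step of part (i): one must run the large-deviation estimate with a Lyapunov functional $H$ whose dissipation $W$ is genuinely degenerate — a priori $W$ vanishes on a set larger than $\{(\bar{\mathbf u},\bar{\mathbf R})\}$ when there are ``neutral'' traits with $\bar u_i(\mathbf x)=0$ but $f(x_i,\mathbf x)=0$ — and with the resources entering only as a slaved deterministic drift rather than as noisy coordinates. Showing that $W$ nonetheless dominates the jump-fluctuation rate $\sum_i u_i(\partial_{u_i}\psi)^2$ near the equilibrium, quantitatively enough to beat the $O(\theta^2)$ correction, is exactly where Assumption~\eqref{A4} is used in an essential way, mirroring the final step of the proof of Theorem~\ref{equilibrium}.
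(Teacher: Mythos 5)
Your proposal is correct in outline, and your part (ii) is essentially the paper's argument: on the event of (i) you dominate each trait $x_i$, $i\in I$, by a subcritical linear birth-and-death process with birth rate $\sum_k\eta_k(x_i)\bar R_k(\mathbf x)+C\varepsilon$ and death rate $d(x_i)$; the paper then invokes the explicit extinction probability of this process, where you use the first-moment bound $\mathbb P(Z_t>0)\le \varepsilon K e^{-(|f(x_i;\mathbf x)|-C\varepsilon)t}$, which is equally sufficient. Part (i), however, takes a genuinely different route for the key exit-time estimate. The paper isolates that estimate as Proposition~\ref{prop:FW} and proves it by a pathwise Gronwall-type inequality for $H=G+\gamma\tilde G$ (so that the process must re-enter a small neighbourhood of $(\bar{\mathbf u},\bar{\mathbf R})$ quickly unless the compensated-Poisson martingale $M^K$ is large), a generic exponential bound on $\sup_t|M^K_t|$ (Lemma~\ref{lem:expo-moment}, via~\cite{GM97}), and an excursion iteration between $B_{\varepsilon''}(\mathbf x)$ and $B_{\varepsilon}(\mathbf x)$ in the spirit of~\cite{DZ93}; you instead push the classical Freidlin--Wentzell exponential supermartingale $e^{\theta K\psi}$ through the generator, which replaces the excursion bookkeeping by a single optional-stopping estimate, but at the price of a quantitative comparison between the dissipation $W$ and the jump fluctuation $\sum_i u_i(\partial_{u_i}\psi)^2$. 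Be aware that, taken literally, ``$W$ dominates the fluctuation'' fails near the faces $u_i=0$ for neutral traits ($\bar u_i(\mathbf x)=0$ with $-d(x_i)+\sum_k\eta_k(x_i)\bar R_k(\mathbf x)=0$): there $W\sim u_i^2$ while the fluctuation is $\sim u_i$ because $\partial_{u_i}G\equiv 1$, so only a bound of the form $\mathrm{fluct}\le C\,W^{1/2}$ holds, and one must check --- using~\eqref{A4} and compactness exactly as in the last step of the proof of Theorem~\ref{equilibrium}, as you indicate --- that the region where $\theta\,\mathrm{fluct}>2W$ inside $B_\varepsilon(\mathbf x)$ is contained in a set $\{\psi<\rho(\theta)\}$ with $\rho(\theta)\to 0$ as $\theta\to 0$; this is the step you flag as the main obstacle, and it does go through, so your scheme is viable. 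What the paper's route buys is that it never needs this degenerate-dissipation comparison (the martingale estimate is decoupled from the Lyapunov structure) and, more importantly, it is stated and proved directly for the perturbed resource dynamics~\eqref{eq:pert-ress}, which is what is actually reused in the invasion phase of the proof of Theorem~\ref{thm:PES-fdd}; your argument as written covers only the unperturbed case, which is all that Theorem~\ref{thm:GD} itself requires. Finally, in your first step the deterministic time $T$ must be chosen uniformly over the compact set $S$ of initial conditions (finiteness of $\sup_{(u,R)\in S}T_{\varepsilon''}(u,R)$ by continuity of the flow), as the paper does before invoking the Markov property.
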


\me Results as Theorem \ref{thm:GD}-(i) are often called ``problem of exit from an attracting
domain'' and can be solved using classical large deviation tools~\cite{FW84}. In the next
section, we will need a stronger (and non standard) large deviation result for a (small
enough) perturbed version of the piecewise deterministic process
$(\nu^K_t,\mathbf{R}^K(t),t\geq 0)$. Therefore we provide a specific proof of the next
result based on the Lyapunov functionals used in Theorem~\ref{equilibrium}.

\begin{prop}
  \label{prop:FW}
  Let $x_1,\ldots,x_n$ be distinct traits and assume that the support of $\nu^K_0$ is a subset
  of $\{x_1,\ldots,x_n\}$. Consider the population process $(\nu^K, \mathbf{R}^K)$ with
  perturbed resource dynamics:
  \begin{equation}
    \label{eq:pert-ress}
    \frac{d R^K_{k}(t)}{dt} = g_{k} - R^K_{k} - R^K_{k}\, \langle \nu^{K}_t, \eta_{k}\rangle +
    a^K_k(t),
  \end{equation}
  where $a^K_k$ is a $(\mathcal{F}_t)_{t\geq 0}$-predictable random process bounded by a
  constant $\eta$.

  \me Then for any $\varepsilon>0$ small enough, there exists $\eta_{0}>0$, $V_\varepsilon>0$
  and $\varepsilon''<\varepsilon$ such that, if $\eta<\eta_{0}$ and
  $(\nu_0^{K},\:\mathbf{R}^K(0))\in B_{\varepsilon''}(\mathbf{x})$, the time of exit  $T_{exit}$ of
  $(\nu_t^{K},\:\mathbf{R}^K(t))$ from $ B_{\varepsilon}(\mathbf{x})$ is bigger than
  $e^{V_\varepsilon K}\wedge T^K_{{mut}}$ with probability converging to 1.
\end{prop}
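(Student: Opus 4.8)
The plan is to turn the Lyapunov function $H$ built in Step~3 of the proof of Theorem~\ref{equilibrium} into a supermartingale-type quantity for the perturbed stochastic process, and then to invoke a standard Freidlin-Wentzell-type argument for the exit time. First I would fix the equilibrium $(\bar{\mathbf u},\bar{\mathbf R})=(\bar{\mathbf u}(\mathbf x),\bar{\mathbf R}(\mathbf x))$ given by Theorem~\ref{equilibrium} and consider the function $H(u,R)=G(u,R)+\gamma\tilde G(u,R)$ from~\eqref{eq:def-G}--\eqref{eq:def-tilde-G}, with $\gamma$ small enough that the bound~\eqref{eq:last-eq} holds. The deterministic flow satisfies $\frac{d}{dt}H\le-c\,\Phi(u,R)$ where $\Phi(u,R):=\sum_k\frac{(R_k-\bar R_k)^2}{R_k}+\gamma\sum_k R_k(\sum_i\eta_k(x_i)(u_i-\bar u_i))^2+\sum_{i\not\in I}u_i$ is a nonnegative function vanishing only at $(\bar{\mathbf u},\bar{\mathbf R})$. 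For the piecewise-deterministic process I would write the Dynkin formula for $H(\nu^K_t,\mathbf R^K(t))$: the resource part of the generator contributes exactly the ODE term, now shifted by the perturbation $a^K_k(t)$, whose contribution is bounded in absolute value by $C\eta$ (since $\partial H/\partial R_k$ is bounded on $B_\varepsilon(\mathbf x)$); the jump part of the generator (births, deaths) contributes the ``discrete'' analogue of the population drift plus a quadratic correction of order $1/K$ coming from the jumps of size $1/K$. Thus on $B_\varepsilon(\mathbf x)$ one gets
\begin{equation*}
  \mathcal L^K H(\nu^K_t,\mathbf R^K(t))\le -c\,\Phi(\nu^K_t,\mathbf R^K(t))+C\Big(\eta+\frac1K\Big).
\end{equation*}

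Next I would run the classical exit-time scheme. Choose $\varepsilon''<\varepsilon'<\varepsilon$ and a sublevel-set argument: because $H$ is strictly convex in the $R_k$ and in the $u_i$ with $\bar u_i>0$, and because of the extinction term $\sum_{i\not\in I}u_i$ in $\Phi$, there are constants so that on the annulus $B_\varepsilon(\mathbf x)\setminus B_{\varepsilon'}(\mathbf x)$ one has $\Phi\ge\rho>0$, while $H$ differs from its value at $(\bar{\mathbf u},\bar{\mathbf R})$ by at least some $\Delta>0$ on $\partial B_\varepsilon(\mathbf x)$ and by at most $\Delta/2$ on $\partial B_{\varepsilon''}(\mathbf x)$. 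Taking $\eta<\eta_0$ and $K$ large so that $C(\eta+1/K)<c\rho/2$, the process $H(\nu^K_t,\mathbf R^K(t))+\frac{c\rho}{2}t$ is a supermartingale as long as the process stays in $B_\varepsilon(\mathbf x)\setminus B_{\varepsilon'}(\mathbf x)$; this both forces the process back towards $B_{\varepsilon'}(\mathbf x)$ and controls excursions. A standard large-deviations lower bound on the cost of crossing the annulus — obtained here directly from the martingale $M^K_t$ with predictable quadratic variation of order $1/K$ (exponential Bernstein/Freidlin-Wentzell estimate) — shows that the probability of reaching $\partial B_\varepsilon(\mathbf x)$ from $\partial B_{\varepsilon''}(\mathbf x)$ within any fixed time $T$ is at most $e^{-cKT'}$ for suitable constants, and then a renewal/iteration over time windows of length $T$ up to $T^K_{\mathrm{mut}}$ yields that $T_{exit}>e^{V_\varepsilon K}\wedge T^K_{\mathrm{mut}}$ with probability tending to $1$, for $V_\varepsilon$ small enough. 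The exit from $B_{\varepsilon'}(\mathbf x)$ into the inner region $B_{\varepsilon''}(\mathbf x)$ to restart the renewal is guaranteed by the drift inequality (the supermartingale cannot stay bounded away from its minimum forever) together with the argument in Theorem~\ref{equilibrium} that the only invariant point of the limiting dynamics inside $B_\varepsilon(\mathbf x)$ is $(\bar{\mathbf u},\bar{\mathbf R})$, using Assumption~\eqref{A4} to kill the degeneracy of $H$ on the sub-manifold $M$.

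The one delicate point, and the main obstacle, is the extinction coordinates $i\not\in I$, i.e.\ the traits $x_i$ present in the support of $\nu^K_0$ but with $\bar u_i(\mathbf x)=0$: there $\log u_i$ in $G$ is singular as $u_i\downarrow0$, so $H$ is not a legitimate test function down to the boundary $u_i=0$. The fix is to replace $u_i-\bar u_i\log u_i$ by simply $u_i$ for $i\not\in I$ (which is what $\tilde G$ and the term $-\beta\sum_{i\not\in I}u_i$ in~\eqref{eq:calcul3} already do), so that the relevant Lyapunov function is smooth on all of $B_\varepsilon(\mathbf x)$ including its boundary in those coordinates, and to note that the drift in the $u_i$, $i\not\in I$, direction is $-|f(x_i;\mathbf x)|u_i+O(\varepsilon)u_i<0$ on $B_\varepsilon(\mathbf x)$ for $\varepsilon$ small — so these coordinates are pushed towards $0$ and never drive the process out of $B_\varepsilon(\mathbf x)$ (this is precisely where the hypothesis enters that allows Theorem~\ref{thm:GD}(ii) afterwards, but here we only need $\varepsilon$-smallness, not the sign of $f$). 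A secondary technicality is that $H$ must be handled with care because $\nu^K$ lives on $\mathcal M^K$, not $(0,\infty)^n$; one simply writes everything in terms of the $n$ densities $\langle\nu^K_t,\mathbf 1_{\{x_i\}}\rangle$, which is legitimate since no mutation occurs before $T^K_{\mathrm{mut}}$ so the support stays inside $\{x_1,\dots,x_n\}$.
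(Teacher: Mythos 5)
Your proposal is correct and follows essentially the same route as the paper: after discarding mutations before $T^K_{\text{mut}}$, it relies on the Lyapunov functional $H=G+\gamma\tilde G$ of Theorem~\ref{equilibrium} together with the drift estimate~\eqref{eq:Gronwall-2}, a second-order expansion of the jump terms of order $1/K$ plus an $O(\eta)$ perturbation contribution, an exponential (Bernstein-type) bound on the martingale $M^K$, and a renewal argument over excursions between nested neighborhoods $B_{\varepsilon''}(\mathbf{x})\subset B_{\varepsilon'}(\mathbf{x})\subset B_{\varepsilon}(\mathbf{x})$, which is exactly the paper's chain of stopping times $\tau'_k,\tau_k$. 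The only quibble is that your ``delicate point'' is vacuous: for $i$ with $\bar u_i(\mathbf{x})=0$ the term $u_i-\bar u_i\log u_i$ in $G$ equals $u_i$, so $H$ is already linear, hence smooth up to the boundary, in those coordinates (the paper notes this explicitly), and no modification of $H$ nor any strict sign condition on $f(x_i;\mathbf{x})$ is needed there, the degenerate case being absorbed by the quadratic term controlled through Assumption~\eqref{A4}.
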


\paragraph{Proof of Theorem~\ref{thm:GD}}
By Theorem~\ref{equilibrium} and the continuity of the flow of solutions to the chemostat
system, for all $\varepsilon''>0$,
$$
T_{\varepsilon''}:=\sup_{(u,R)\in S}T_{\varepsilon''}(u,R)<+\infty,
$$
where $T_{\varepsilon''}(u,R)$ is the last entrance time in $B_{\varepsilon''/2}(\mathbf{x})$ of
the solution of $\text{CH}(n;\mathbf{x})$ with initial condition $(u,R)$. Then, applying
Theorem~\ref{largepop} on the time interval $[0,T_{\varepsilon''}]$, we obtain that
$\mathbb{P}(A_K)\rightarrow 1$ when $K\rightarrow+\infty$, where
$$
A_K:=\Big\{(\nu^K_{T_{\varepsilon''}},\mathbf{R}^K(T_{\varepsilon''}))\in
B_{\varepsilon''}(\mathbf{x})\Big\}.
$$
Then, choosing $\varepsilon''$ as in Proposition~\ref{prop:FW} and setting
$T_\varepsilon=T_{\varepsilon''}$, Point~(i) follows from the Markov property and
Proposition~\ref{prop:FW} (with $a^K(t)\equiv 0$).

\me For all $i\in I$, on the event $A_K$, during all the time interval
$[T_\varepsilon,e^{V_\varepsilon K}\wedge T^K_{{mut}}]$, the birth rate of each individual of
trait $x_i$ is bounded from above by $\sum_{k}\eta_k(x_i)\bar{R}_k+C\varepsilon$ for some
constant $C$. Therefore, the number $Z^i_t$ of individuals with trait $x_i$ at time
$T_\varepsilon+t$ is dominated on the event $A_K$ by a continuous time binary branching
process $\tilde{Z}^i_t$ with birth rate $\sum_{k}\eta_k(x_i)\bar{R}_k+C\varepsilon$,
death rate $d(x_i)$ and initial condition $\tilde{Z}^i_0=Z^i_{T_{\varepsilon}}=K\langle\nu^K_{T_{\varepsilon}},\mathbf{1}_{\{x_i\}}\rangle\leq \varepsilon'' K$.

\me It is well-known (cf. e.g.~\cite[p.\;109]{AN72}) that
$$
\PP(\tilde{Z}^i_t=0\mid
\tilde{Z}^i_0=1)=1-\frac{-f(x_i;\mathbf{x})-C\varepsilon}{d(x_i)\exp((-f(x_i;\mathbf{x})-C\varepsilon)t)-\sum_{k}\eta_k(x_i)\bar{R}_k+C\varepsilon}.
$$
Choosing $\varepsilon$ small enough for $-f(x_i;\mathbf{x})>2C\varepsilon$ and using the fact
that $\mathbb{P}(\tilde{Z}^i_t>0\mid\tilde{Z}^i_0=j)=
1-\mathbb{P}(\tilde{Z}^i_t=0\mid\tilde{Z}^i_0=1)^j$, it is immediate to check that, taking
$t_K= (1/|f(x_i;\mathbf{x})|+\delta)\log K$,
$$
\lim_{K\rightarrow+\infty}\sup_{0\leq j\leq
  K\varepsilon}\mathbb{P}(\tilde{Z}^i_{t_K}>0\mid\tilde{Z}^i_0=j)=0
$$
if $\varepsilon$ is small enough. Point~(ii) then easily follows.\hfill$\Box$
\bigskip

\paragraph{Proof of Proposition~\ref{prop:FW}}
In all the proof, we shall denote by $\bar u$ and $\bar R$ the vectors
$\bar{\mathbf{u}}(\mathbf{x})$ and $\bar{\mathbf{R}}(\mathbf{x})$.

\me Let us first observe that, because all the events involved in Proposition~\ref{prop:FW} are
${\cal F}_{T^K_{\text{mut}}}$ measurable, it is sufficient to prove Proposition~\ref{prop:FW}
for the population process where all birth rates with mutations are set to 0. Let us still
denote this process by $(\nu^K_t,\mathbf{R}^K(t))_{t\geq 0}$. Then, at each time $t$, the set
of living traits in the process is a subset of $\{x_1,\ldots,x_n\}$ and the model reduces to a
birth and death Markov chain in $(\frac{1}{K}\mathbb{Z}_+)^n$ for
$(N^K_i(t):=\langle\nu^K_t,\mathbbm{1}_{x_i}\rangle,1\leq i\leq n)_{t\geq 0}$, coupled
with~(\ref{eq:pert-ress}).

\me Recall the definition of the Lyapunov functional $H(u,R)$ for the chemostat system in the proof of Theorem~\ref{equilibrium}:
$$
H(u,R)=G(u,R)+\gamma \tilde{G}(u,R),
$$
where $G$ is defined in~(\ref{eq:def-G}) and $\tilde{G}$ in~(\ref{eq:def-tilde-G}), and
$\gamma>0$ can be arbitrary provided it is small enough.

\me Set $I:=\{i:\bar u_i=0\}$ and $J:=\{i:d(x_i)=\sum_k\eta_n(x_i)\bar R_k\}$. Note that
$J^c\subset I$ but in general, $J^c\not= I$.
Note that $G$ is linear in the variables $(u_i,i\in I)$ and clearly a strictly convex function  of the variables $(u_i,i \not\in I)$ and $R$
with diagonal Hessian matrix.
Hence, since $\tilde{G}(u,R)$ is a quadratic form, making $\gamma$ smaller if necessary,
\begin{multline}
  \label{eq:Gronwall-1}
  \|u-\bar u\|^2+\|R-\bar R\|^2\leq \sum_{i\not\in I}|u_i-\bar u_i|^2+\sum_{i\in
    I}u_i+\|R-\bar R\|^2 \\ \leq C_1\left[H(u,R)-H(\bar u,\bar R)\right]\leq C_1^2\Big(
    \sum_{i\not\in I}|u_i-\bar u_i|^2+\sum_{i\in I}u_i+\|R-\bar R\|^2\Big),
\end{multline}
for some constant $C_1>1$ and for all $(u,R)$ close enough to $(\bar u,\bar R)$.% , where $\|\cdot\|$ stands for the standard Euclidean norm
% in $\mathbb{R}_+^n$ for population densities, or $\mathbb{R}_+^r$ for resources concentrations.

\me By~(\ref{eq:last-eq}), if $(u(t),R(t))$ is any solution of
$\text{CH}(n;\mathbf{x})$,
\begin{align*}
  \frac{d}{dt}H(u(t),R(t)) & \leq -C_2\|R-\bar
  R\|^2-2C_2\sum_{k=1}^r\left(\sum_{i=1}^n\eta_k(x_i)(u_i-\bar u_i)\right)^2-2C_2\sum_{i\not\in
    J}u_i \\
  & \leq -C_2\|R-\bar
  R\|^2-C_2\sum_{k=1}^r\left(\sum_{i\in J} \eta_k(x_i)(u_i-\bar u_i)\right)^2-C_2\sum_{i\not\in
    J}u_i
\end{align*}
for a positive constant $C_2$ if $(u,R)$ close enough to $(\bar u,\bar R)$. Due to
Assumption~\eqref{A4}, as seen in the end of the proof of Theorem~\ref{equilibrium}, the
second term of the right hand side is zero iff $u=\bar u$. Therefore,
introducing
$$
C_3:=\inf\Big\{\sum_{k=1}^r\Big(\sum_{i\in J} \eta_k(x_i)(u_i-\bar u_i)\Big)^2:
  (u_i)_{i\in J}\in\mathbb{R}_+^{|J|} \text{\ s.t.\ }\sum_{i\in J}|u_i-\bar u_i|^2=1\Big\}>0,
$$
one deduces that
\begin{equation}
  \label{eq:Gronwall-2}
  \frac{d}{dt}H(u(t),R(t)) \leq -C_2\|R-\bar R\|^2-C_2C_3\sum_{i\in J}|u_i-\bar u_i|^2
-C_2\sum_{i\not\in
    J}u_i \leq -C_4\left[
    \|u-\bar u\|^2+\|R-\bar R\|^2\right]
\end{equation}
for some positive constant $C_4$.

\me In order to keep notations simple, we shall denote by $H(\nu^K_t,R^K(t))$ the function
$$
H\Big(N^K_1(t),\ldots,N^K_n(t),R^K_1(t),\ldots,R^K_r(t)\Big).
$$
Then, it follows from~(\ref{eq:constr-nu}) that
\begin{align*}
  & H(\nu^K_t,R^K(t)) =H(\nu^K_0,R^K(0)) \\ &
  +\int_0^t\int_{\mathbb{N}}\int_0^\infty
  \left(H\Big(\nu^K_{s-}+\frac{\delta_{x_i(\nu_{s-})}}{K},R^K(s)\Big)-H(\nu^K_{s-},R^K(s))\right) \\
  & \qquad\qquad\qquad\quad \mathbbm{1}_{\{i\leq n(\nu_{s-}),\
    \theta\leq(1-\mu_Kp(x_i(\nu_{s-})))\sum_{k=1}^r\eta_k(x_i(\nu_{s-}))R_k(s)\}}N_1(ds,di,d\theta)
  \\ & +\int_0^t\int_{\mathbb{N}}\int_0^\infty\int_{\mathbb{R}^{\ell}}
  \left(H\Big(\nu^K_{s-}+\frac{\delta_{x_i(\nu_{s-})+h}}{K},R^K(s)\Big)-H(\nu^K_{s-},R^K(s))\right)
  \\ & \qquad\qquad\qquad\quad \mathbbm{1}_{\{i\leq n(\nu_{s-}),\
    \theta\leq m(x_i(\nu_{s-}),h)\mu_Kp(x_i(\nu_{s-}))\sum_{k=1}^r\eta_k(x_i(\nu_{s-}))R_k(s)\}}N_2(ds,di,d\theta,dh)
  \\ & -\int_0^t\int_{\mathbb{N}}\int_0^\infty
  \left(H\Big(\nu^K_{s-}-\frac{\delta_{x_i(\nu_{s-})}}{K},R^K(s)\Big)-H(\nu^K_{s-},R^K(s))\right)
  \mathbbm{1}_{\{i\leq n(\nu_{s-}),\ \theta\leq d(x_i(\nu_{s-}))\}}N_3(ds,di,d\theta) \\ &
  +\sum_{k=1}^r\int_0^t\frac{\partial H}{\partial R_k}(\nu^K_s,R^K(s))\left(g_k-R^K_k(s)
    \left(1+\langle \nu^K_s,\eta_k\rangle\right)+a_k^K(t)\right)ds.
\end{align*}
Let $T_{\text{exit}}$ denote the first exit time of $(\nu^K(t),R^K(t))$ from
$B_\varepsilon(\mathbf{x})$. Assuming $t\leq T_{\text{exit}}$, one can make a second order
expansion of the quantities of the form
$$
\left(H\Big(\nu^K_{s-}\pm \frac{\delta_{y}}{K},R^K(s)\Big)-H(\nu^K_{s-},R^K(s))\right)
$$
appearing in the previous equation. Introducing the compensated Poisson point measures $\tilde{N}_i:=N_i-q_i$ for $i=1,2,3$, we
obtain, for all $t\leq T_{\text{exit}}$,
\begin{align}
  & H(\nu^K_t,R^K(t)) =H(\nu^K_0,R^K(0))+M^K_t \notag \\ &
  +\sum_{i=1}^n\int_0^t\frac{\partial H}{\partial
    u_i}(\nu^K_s,R^K(s))N^K_i(s)\left(-d(x_i)+\sum_{k=1}^r\eta_k(x_i)R^K_k(s)\right)ds
  +O\left(\frac{t}{K}\right) \notag \\ &
  +\sum_{k=1}^r\int_0^t\frac{\partial H}{\partial R_k}(\nu^K_s,R^K(s))\left(g_k-R^K_k(s)
    \left(1+\langle \nu^K_s,\eta_k\rangle\right)\right)ds +O(\eta t), \label{eq:Gronwall-3}
\end{align}
where $M^K_t$ is the local martingale
\begin{align*}
  M^K_t & :=\int_0^t\int_{\mathbb{N}}\int_0^\infty
  \left(H\Big(\nu^K_{s-}+\frac{\delta_{x_i(\nu_{s-})}}{K},R^K(s)\Big)-H(\nu^K_{s-},R^K(s))\right) \\
  & \qquad\qquad\qquad\quad \mathbbm{1}_{\{i\leq n(\nu_{s-}),\
    \theta\leq(1-\mu_Kp(x_i(\nu_{s-})))\sum_{k=1}^r\eta_k(x_i(\nu_{s-}))R_k(s)\}}\tilde{N}_1(ds,di,d\theta)
  \\ & +\int_0^t\int_{\mathbb{N}}\int_0^\infty\int_{\mathbb{R}^{\ell}}
  \left(H\Big(\nu^K_{s-}+\frac{\delta_{x_i(\nu_{s-})+h}}{K},R^K(s)\Big)-H(\nu^K_{s-},R^K(s))\right)
  \\ & \qquad\qquad\qquad\quad \mathbbm{1}_{\{i\leq n(\nu_{s-}),\ \theta\leq
    m(x_i(\nu_{s-}),h)\mu_Kp(x_i(\nu_{s-}))\sum_{k=1}^r\eta_k(x_i(\nu_{s-}))R_k(s)\}}\tilde{N}_2(ds,di,d\theta,dh)
  \\ & -\int_0^t\int_{\mathbb{N}}\int_0^\infty
  \left(H\Big(\nu^K_{s-}-\frac{\delta_{x_i(\nu_{s-})}}{K},R^K(s)\Big)-H(\nu^K_{s-},R^K(s))\right)
  \mathbbm{1}_{\{i\leq n(\nu_{s-}),\ \theta\leq
    d(x_i(\nu_{s-}))\}}\tilde{N}_3(ds,di,d\theta).
\end{align*}
Assuming $\varepsilon$ small enough, combining~(\ref{eq:Gronwall-1}),~(\ref{eq:Gronwall-2})
and~(\ref{eq:Gronwall-3}), we obtain for all $t\in[0,T\wedge T_\text{exit}]$ and for $K\geq
1/\eta$
\begin{multline}
  \|N^K(t)-\bar u\|^2+\|R^K(t)-\bar R\|^2\leq C_1\Big[C_1\Big(\sum_{i\not\in I}|N^K_i(0)-\bar
  u_i|^2+\sum_{i\in I}N^K_i(0)+\|R^K(0)-\bar R\|^2\Big)+\sup_{t\in[0,T]}|M^K_t|  \\
  -C_4\int_0^t\left(\|N^K(s)-\bar u\|^2+\|R^K(s)-\bar R\|^2-C_5\eta\right)ds\Big] \label{eq:last}
\end{multline}
for some constant $C_5>0$. Therefore, before $T\wedge T_\text{exit}$,
$\|N^K(s)-\bar u\|^2+\|R^K(s)-\bar R\|^2$ cannot stay larger than $2C_5 \eta$ on a time interval
larger than
\begin{equation}
  \label{eq:def-t-eta}
  T_\eta:=\frac{C_1(\sum_{i\not\in I}|N^K_i(0)-\bar
  u_i|^2+\sum_{i\in I}N^K_i(0)+\|R^K(0)-\bar R\|^2)+\sup_{t\in[0,T]}|M^K_t|}{C_4C_5\eta}.  
\end{equation}
This implies the following lemma.

\begin{lem}
  \label{lem:interm}
  \begin{description}
  \item[\textmd{(1)}] Fix $T,\eta>0$, define $T_\eta$ as in~(\ref{eq:def-t-eta}) and let
    $S_\eta$ denote the first hitting time of the $2C_5\eta$-neighborhood of $(\bar u,\bar R)$ by $(N^K(t),R^K(t))$. Then, on the event
    \begin{equation}
      \label{eq:event-interm}
      \left\{ T_\eta\leq T\wedge \frac{\varepsilon}{2C_1C_4C_5\eta}\right\},    
    \end{equation}
    we have
    $$
    \sup_{t\in[0,S_\eta]}(\|N^K(t)-\bar u\|^2+\|R^K(t)-\bar R\|^2)\leq C_1C_4C_5\eta
    T_\eta\leq\frac{\varepsilon}{2}
    $$
    and $S_\eta\leq T_\eta\wedge T_\text{exit}$ a.s.
  \item[\textmd{(2)}] In addition, on the same event, we also have
    $$
    \sup_{t\in[0,T\wedge T_\text{exit}]}(\|N^K(t)-\bar u\|^2+\|R^K(t)-\bar R\|^2)\leq
    C_1C_4C_5\eta (T_\eta+T)\leq\frac{\varepsilon}{2}+C_1C_4C_5\eta T.
    $$
    Therefore, under the additional condition that
    \begin{equation}
      \label{eq:cond-interm}
      \eta<\frac{\varepsilon}{2C_1C_4C_5T},
    \end{equation}
    we have $T_{\text{exit}}>T$.
  \end{description}
\end{lem}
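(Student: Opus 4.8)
The whole statement is read off from the single integral inequality~\eqref{eq:last}, so no further probabilistic input is needed. Write $\Phi(t):=\|N^K(t)-\bar u\|^2+\|R^K(t)-\bar R\|^2$ and recall that the numerator in~\eqref{eq:def-t-eta} equals $C_4C_5\eta\,T_\eta$, so that~\eqref{eq:last} reads
$$
\Phi(t)\le C_1\Big(C_4C_5\eta\,T_\eta-C_4\int_0^t\big(\Phi(s)-C_5\eta\big)\,ds\Big),\qquad 0\le t\le T\wedge T_{\text{exit}}.
$$
The first thing to isolate is the elementary mechanism: if $\Phi(s)>2C_5\eta$ for all $s\in[0,t]$ with $t\le T\wedge T_{\text{exit}}$, then the integrand is $\ge C_5\eta$ on $[0,t]$, whence $\Phi(t)\le C_1C_4C_5\eta\,(T_\eta-t)$; since $\Phi\ge0$ this \emph{both} forces $t\le T_\eta$ and bounds $\Phi(t)\le C_1C_4C_5\eta\,T_\eta$. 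In words, $\Phi$ cannot stay above $2C_5\eta$ for a time longer than $T_\eta$ before $T\wedge T_{\text{exit}}$, and while it does it stays $\le C_1C_4C_5\eta\,T_\eta$.

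For part~(1) I would work on the event~\eqref{eq:event-interm} and set $\tau:=S_\eta\wedge T\wedge T_{\text{exit}}$. For $t<\tau$ one has $\Phi(s)>2C_5\eta$ on $[0,t]$ by definition of $S_\eta$, so the mechanism gives $t\le T_\eta$ and $\Phi(t)\le C_1C_4C_5\eta\,T_\eta\le\varepsilon/2$, the last step using $T_\eta\le\varepsilon/(2C_1C_4C_5\eta)$ on~\eqref{eq:event-interm}. Hence $\tau\le T_\eta\le T$ and $\Phi\le\varepsilon/2$ on $[0,\tau)$. Granting that $T_{\text{exit}}$ cannot be the minimum here (see the last paragraph), this gives $\tau=S_\eta$, i.e.\ $S_\eta\le T_\eta\wedge T_{\text{exit}}$; a short check at the endpoint $t=S_\eta$ — using that the numerator of $T_\eta$ dominates $C_1\Phi(0)$ (drop the martingale term and use $\bar u_i=0$, $N^K_i(0)\le1$ for $i\in I$), so that $C_1C_4C_5\eta\,T_\eta\ge C_1^2\Phi(0)\ge 2C_5\eta\ge\Phi(S_\eta)$ in the only non-trivial case $\Phi(0)\ge2C_5\eta$, while $\Phi(0)<2C_5\eta$ forces $S_\eta=0$ — extends the estimate to the closed interval, yielding $\sup_{[0,S_\eta]}\Phi\le C_1C_4C_5\eta\,T_\eta\le\varepsilon/2$.

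For part~(2) I would simply use $\int_0^t(\Phi(s)-C_5\eta)\,ds\ge-C_5\eta\,t\ge-C_5\eta\,T$ on $[0,T\wedge T_{\text{exit}}]$ (valid because $\Phi\ge0$); plugging this into the displayed inequality gives, for all $t\le T\wedge T_{\text{exit}}$,
$$
\Phi(t)\le C_1\big(C_4C_5\eta\,T_\eta+C_4C_5\eta\,T\big)=C_1C_4C_5\eta\,(T_\eta+T),
$$
which on~\eqref{eq:event-interm} is $\le\varepsilon/2+C_1C_4C_5\eta\,T$, the asserted bound. If moreover~\eqref{eq:cond-interm} holds, the right-hand side is $<\varepsilon$, so $\Phi$ stays strictly below the level at which the process could leave $B_\varepsilon(\mathbf{x})$ throughout $[0,T\wedge T_{\text{exit}}]$, and the same bootstrap as in part~(1) forces $T_{\text{exit}}>T$.

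The one genuinely delicate point — and the step I expect to be the main obstacle — is the implication ``$\Phi$ small $\Rightarrow$ the process has not yet left $B_\varepsilon(\mathbf{x})$'' used in both parts. Since $B_\varepsilon(\mathbf{x})$ is described coordinatewise rather than through $\Phi$, one must take $\varepsilon$ small enough that $\{\Phi\le\varepsilon/2\}$ (resp.\ $\{\Phi<\varepsilon\}$) sits strictly inside $B_\varepsilon(\mathbf{x})$, and $K$ large enough that a single jump of $\Phi$ — of size $O(1/K)$, only the population component moving, by $\pm1/K$ in one coordinate — cannot carry the process out of $B_\varepsilon(\mathbf{x})$ from such a sublevel set. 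With these choices, a standard stopping-time comparison among $S_\eta$, $T$, $T_{\text{exit}}$ and the first time $\Phi$ exceeds $\varepsilon/2$ (using also that $\Phi(0)$ is much smaller than $\varepsilon$ in the regime of Proposition~\ref{prop:FW}) shows $T_{\text{exit}}>S_\eta\wedge T$, which closes both parts; everything else is the elementary comparison of the first paragraph.
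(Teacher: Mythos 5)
Your argument is essentially the paper's own: the paper offers no separate proof of Lemma~\ref{lem:interm} beyond the remark that, by~\eqref{eq:last}, the quantity $\|N^K(t)-\bar u\|^2+\|R^K(t)-\bar R\|^2$ cannot stay above $2C_5\eta$ for longer than $T_\eta$ before $T\wedge T_{\text{exit}}$, and that is exactly the elementary comparison you spell out (your endpoint check at $t=S_\eta$ and the use of the first inequality in~\eqref{eq:Gronwall-1} to dominate the initial term are fine, and the paper leaves them implicit). One caveat on your last paragraph: the inclusion you invoke, $\{\Phi\le\varepsilon/2\}\subset B_\varepsilon(\mathbf{x})$, amounts to $\sqrt{\varepsilon/2}<\varepsilon$ and therefore holds only for $\varepsilon>1/2$, so ``take $\varepsilon$ small enough'' goes in the wrong direction; this mismatch is a scaling sloppiness already present in the paper (the ball $B_\varepsilon$ is defined through coordinate deviations of order $\varepsilon$, while the Lyapunov estimates control the squared norm), and the consistent reading — the one the paper uses downstream when it sets $T$ and $\alpha$ proportional to $\varepsilon'^2$ — is to measure the exit in terms of the squared norm (i.e.\ use $\varepsilon^2$-type thresholds in the $\Phi$-bounds), not to shrink $\varepsilon$.
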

% The second point is clear since, by~(\ref{eq:last}) and~(\ref{eq:def-t-eta}), for all $t\in[0,T\wedge T_\text{exit}]$,
% $$
% \|N^K(t)-\bar u\|^2+\|R^K(t)-\bar R\|^2\leq C_1C_4C_5\eta(T+T_\eta).
% $$
We are also going to use exponential moment estimates on the martingale $M^K_t$.

\begin{lem}
  \label{lem:expo-moment}
  For all $\alpha>0$ and $T>0$, there exists a constant $V_{\alpha,T}>0$ such that for
  all $K$ large enough
  $$
  \mathbb{P}\Big(\sup_{t\in[0,T\wedge T_{\text{exit}}]}|M^K_t|>\alpha\Big)\leq 
  \exp(-KV_{\alpha,T}).
  $$
\end{lem}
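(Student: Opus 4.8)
The plan is to establish this exponential estimate by a Cram\'er--Chernoff argument for the purely discontinuous martingale $M^K$, exploiting the scaling hidden in its definition: as long as $(\nu^K,\mathbf{R}^K)$ has not left $B_\varepsilon(\mathbf{x})$, the jumps of $M^K$ have size $O(1/K)$ while their total intensity is $O(K)$, so that an exponential change of measure by a parameter of order $K$ produces a decay rate of order $K$.

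First I would record the bounds available before $T_{\text{exit}}$. As in the proof of Proposition~\ref{prop:FW}, one works with the process in which all birth rates with mutation are set to $0$, so that $M^K$ consists only of the compensated Poisson integrals against $\tilde N_1$ and $\tilde N_3$ (even without this reduction, the $\tilde N_2$ contribution, of intensity $O(K\mu_K)$, would be negligible by~\eqref{echmut}). On the stochastic interval $[0,T_{\text{exit}}]$ the state $(N^K(t),\mathbf{R}^K(t))$ stays in the compact set $\overline{B_\varepsilon(\mathbf{x})}$, on which $H$ is $C^2$; a first-order Taylor expansion then shows that every jump of $M^K$ has absolute value at most $C/K$ for a constant $C=C(\varepsilon)$, and that the total jump intensity at any time $s\le T_{\text{exit}}$ is bounded by $C'K$ (there are at most $C''K$ individuals there, each jumping at rate $O(1)$).

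Next I would introduce, for $\lambda\in\mathbb{R}$, the exponential process
$$
\mathcal E^\lambda_t=\exp\Big(\lambda M^K_t-\int_0^t\!\!\int\psi\big(\lambda\,\Delta_s H\big)\,dq\Big),\qquad \psi(v)=e^v-1-v,
$$
where $\Delta_s H$ denotes the predictable jump amplitude of $M^K$ at time $s$ and $q=q_1+q_3$; this is a nonnegative local martingale with $\mathcal E^\lambda_0=1$, hence a supermartingale. Taking $\lambda=\theta K$ with $\theta>0$ to be fixed later, the previous step gives $|\lambda\,\Delta_s H|\le\theta C$ for $s\le T_{\text{exit}}$, and the elementary bound $\psi(v)\le\frac12 v^2 e^{|v|}$ then yields $\int_0^t\!\int\psi(\lambda\,\Delta_s H)\,dq\le A(\theta)\,Kt$ for $t\le T_{\text{exit}}$, with $A(\theta)=\frac12 C'C^2\theta^2 e^{\theta C}$. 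Consequently $\mathcal E^{\theta K}_t\ge\exp\big(\theta K M^K_t-A(\theta)Kt\big)$ on $[0,T_{\text{exit}}]$.

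Finally, applying optional stopping to the supermartingale $\mathcal E^{\theta K}$ at $\tau=\inf\{t\ge0:M^K_t>\alpha\}\wedge T\wedge T_{\text{exit}}$ gives $\mathbb{E}[\mathcal E^{\theta K}_\tau]\le1$; on the event $\{\sup_{t\le T\wedge T_{\text{exit}}}M^K_t>\alpha\}$ one has $M^K_\tau\ge\alpha$ by right-continuity and $\tau\le T$, hence $\mathcal E^{\theta K}_\tau\ge\exp\big(K(\theta\alpha-A(\theta)T)\big)$, which gives
$$
\mathbb{P}\Big(\sup_{t\le T\wedge T_{\text{exit}}}M^K_t>\alpha\Big)\le\exp\big(-K(\theta\alpha-A(\theta)T)\big),
$$
and the same computation with $\lambda=-\theta K$ controls $\inf_{t\le T\wedge T_{\text{exit}}}M^K_t<-\alpha$. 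Since $A(\theta)=O(\theta^2)$ as $\theta\to0$, I would fix $\theta>0$ small enough that $V_{\alpha,T}:=\theta\alpha-A(\theta)T>0$; this proves the lemma for all $K$ large enough (the largeness of $K$ being used only so that the second-order remainders in the Taylor expansion of $H$ are absorbed by the stated constants). The only genuinely delicate point is the bookkeeping of the first step, namely that up to $T_{\text{exit}}$ the jump sizes and the aggregate intensity are controlled uniformly in $K$ by the correct powers of $K$; confining the trajectory to the compact ball $\overline{B_\varepsilon(\mathbf{x})}$ is exactly what makes this possible, and once this balance of scales is in place the exponential-martingale estimate is routine.
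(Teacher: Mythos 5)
Your argument is correct and is essentially the paper's own proof: the paper makes exactly the same reduction (before $T_{\text{exit}}$ the jumps of $M^K$ are $O(1/K)$ and occur at aggregate rate $O(K)$) and then invokes the exponential-martingale estimate of Proposition~4.1 in~\cite{GM97}, with the same function $\tau(x)=e^x-1-x$ and the same final choice of a small parameter so that $\theta\alpha-A(\theta)T>0$. The only difference is that you prove that estimate yourself via the Dol\'eans-Dade exponential supermartingale and optional stopping instead of citing it, which is a harmless (self-contained) variant of the same approach.
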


\begin{proof}
  Since we are only dealing with events occurring before $T_\text{exit}$, we can assume without
  loss of generality that the martingale $M^K_t$ has all its jumps bounded by a constant times
  $1/K$, and that all its jump times are (some of the) jump times of Poisson point measures
  which occur at a rate bounded by a constant times $K$ (i.e.\ the Poisson point measures
  $N_1(ds,di,d\theta)\mathbbm{1}_{i\leq K u^*,\,\theta\leq r\bar\eta \bar g}\,$, $\,N_2(ds,di,d\theta,dh)\mathbbm{1}_{i\leq K
    u^*,\,\theta\leq r\bar\eta \bar g}\,$ and $\,N_3(ds,di,d\theta)\mathbbm{1}_{i\leq K u^*,\,\theta\leq \bar d}\,$, where $u^*:=1+\max_i \bar u_i$).
  Therefore, the previous result is a quite standard consequence of properties of exponential
  martingales for pure jump processes. For example, this is a consequence of the proof of
  Proposition~4.1 in~\cite{GM97}, where it is proved that there exists a constant $C>0$
  independent of $T$ and $\varepsilon$ such that for all $a>0$
  $$
  \mathbb{P}\Big(\sup_{t\in[0,T]}|M^K_t|>\varepsilon\Big)\leq 2\exp(-Ka\varepsilon+CKT\tau(Ca)),
  $$
  where $\tau(x)=e^x-1-x$. Since $\tau(x)\sim x^2/2$ when $x\rightarrow 0$,
  Lemma~\ref{lem:expo-moment} then follows by choosing $a>0$ small enough.
\end{proof}

Let us now introduce two parameters $\varepsilon'$ and $\varepsilon''$ such that
$\varepsilon''<\varepsilon'/2<\varepsilon'<\varepsilon$, to be determined later.
% We shall denote by $B_\alpha$ the ball of $\mathbb{R}_+^{n+r}$ of radius $\alpha$ centered
% at $(\bar u,\bar R)$.
Let $\tau_0:=0$. For all $k\geq 1$ such that $\tau_{k-1}<T_\text{exit}$, we define
the stopping times
\begin{align*}
  \tau'_k & :=\inf\{t\geq \tau_{k-1}:(\nu^K_t,\mathbf{R}^K(t))\not\in B_{\varepsilon'/2}(\mathbf{x})\}, \\
  \tau_k & :=\inf\{t\geq \tau'_{k}:(\nu^K_t,\mathbf{R}^K(t))\in B_{\varepsilon''}(\mathbf{x}) \text{\ or\
  }(\nu^K_t,\mathbf{R}^K(t))\not\in B_{\varepsilon}(\mathbf{x})\}.
\end{align*}
Then, combining the strong Markov property with Lemmata~\ref{lem:interm}~(1)
and~\ref{lem:expo-moment}, it is not difficult to prove (cf.~\cite{DZ93}) that, for convenient
choices of $\varepsilon'$ and $\varepsilon''$, setting $\eta=\varepsilon''/2C_5$,
$T=2C_1\varepsilon^{\prime 2}/C_4C_5\eta$ and $\alpha=C_1\varepsilon^{\prime 2}$, these exists
a constant $V_\varepsilon:=V_{\alpha,T}$ such that
$$
\sup_{(\nu^K_0,\mathbf{R}^K(0))\in B_{\varepsilon'}(\mathbf{x})}\mathbb{P}(\tau_1<T_\text{exit})\geq
1-e^{-KV_\varepsilon}
$$
and thus, for all $k\geq 1$,
$$
\sup_{(\nu^K_0,\mathbf{R}^K(0))\in B_{\varepsilon'}(\mathbf{x})}\mathbb{P}(\tau_k<T_\text{exit} \mid
\tau_{k-1}<T_\text{exit})\geq 1-e^{-KV_\varepsilon}.
$$
Therefore, if $k_\text{exit}$ denotes the unique integer $k$ such that $\tau_k=T_\text{exit}$,
then $k_\text{exit}$ is larger or equal to a geometric random variable of parameter
$e^{-KV_\varepsilon}$. Hence $k_\text{exit}\geq e^{KV_\varepsilon/2}$ with a probability
converging to 1. Therefore, in order to complete the proof, it only remains to check that for
all $k$, $\tau'_k-\tau_{k-1}>T'$ with positive probability for some $T'>0$. Thanks to the
strong Markov property, this is implied by
$$
\inf_{(\nu^K_0,\mathbf{R}^K(0))\in B_{\varepsilon''}(\mathbf{x})}\mathbb{P}(\tau'_1>T')>0,
$$
which is a consequence of Lemmata~\ref{lem:interm}~(2) and~\ref{lem:expo-moment}, replacing
$\varepsilon$ by $\varepsilon'/2$ and choosing $T'=2C_1\varepsilon^{\prime\prime 2}/C_4C_5\eta$
and $\alpha=C_1\varepsilon^{\prime\prime 2}$ (in view of the choice of $\eta$ above, this can
impose to reduce $\varepsilon''$ so that $16C_1^2\varepsilon^{\prime\prime
  2}<\varepsilon^{\prime 2}$, but this only changes the constant $V_\varepsilon$ above), and
the proof of Proposition~\ref{prop:FW} is completed.\hfill$\Box$
\bigskip

\section{Rare mutations and evolutionary time scales}
\label{sec:PES}

We now go back to the stochastic model of Section~\ref{sec:model}. Our goal is to describe the effect of the random mutations on the
population under a scaling of large population and rare and small mutations. We  give three convergence results corresponding to three evolutionary time scales. These results are similar to the ones obtained  the Lotka-Volterra competition case ~(cf. \cite{CM11}) and the new ingredients needed in the proofs have been already obtained in the previous sections.
Therefore this section  is devoted to the statement of the results and their application to the example introduced in the Section~\ref{sec:example-1} and the rigorous modifications  of the proofs  are moved to Appendix.

\subsection{Convergence to the Polymorphic Evolution Sequence for chemostat system}
\label{sec:cv-PES}

As shown in Theorem~\ref{largepop}, mutations have no influence in the limit of large $K$ and small $\mu_K$ on the original
time-scale of the population process. We show that evolution proceeds on the longer time-scale of mutations $\frac{t}{K\mu_K}$, for which the population process converges to a
pure jump process describing successive mutant invasions. After each mutation, disadvantaged traits are driven to extinction due to
the competition induced by the deterministic chemostat systems. This convergence  to a pure jump process is illustrated by  Figure~\ref{fig:sim-mut-rares}.

\begin{figure}
\label{fig:sim-mut-rares} 
 \begin{minipage}[b]{0.47\textwidth}
   \begin{center}     
    \includegraphics[width=6cm]{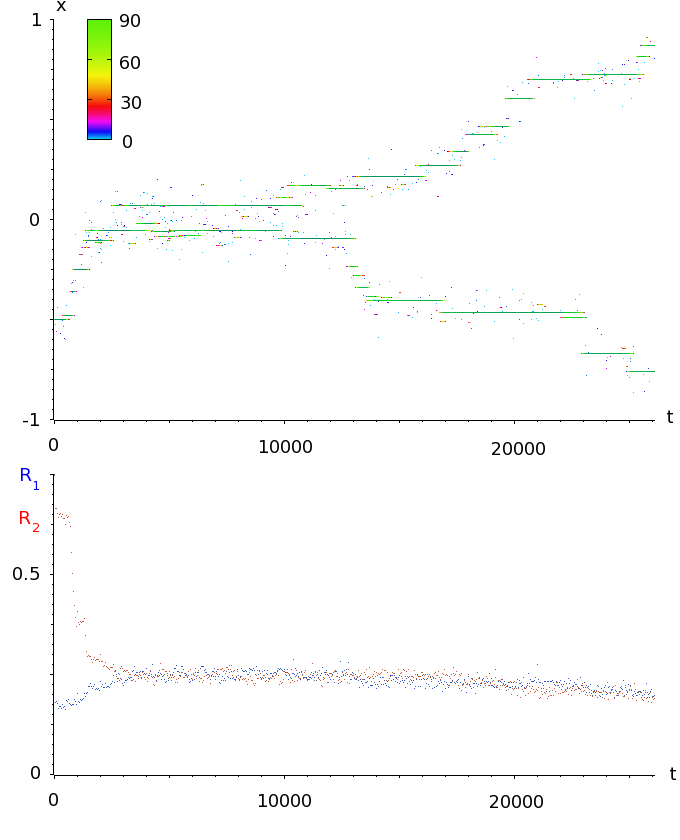}
    \medskip\\
    $K=300,\ p=0.0003,\ \sigma=0.06,\ a=1/4$
   \end{center}
  \end{minipage}
  \begin{minipage}[b]{0.47\textwidth}
   \begin{center}     
    \includegraphics[width=6cm]{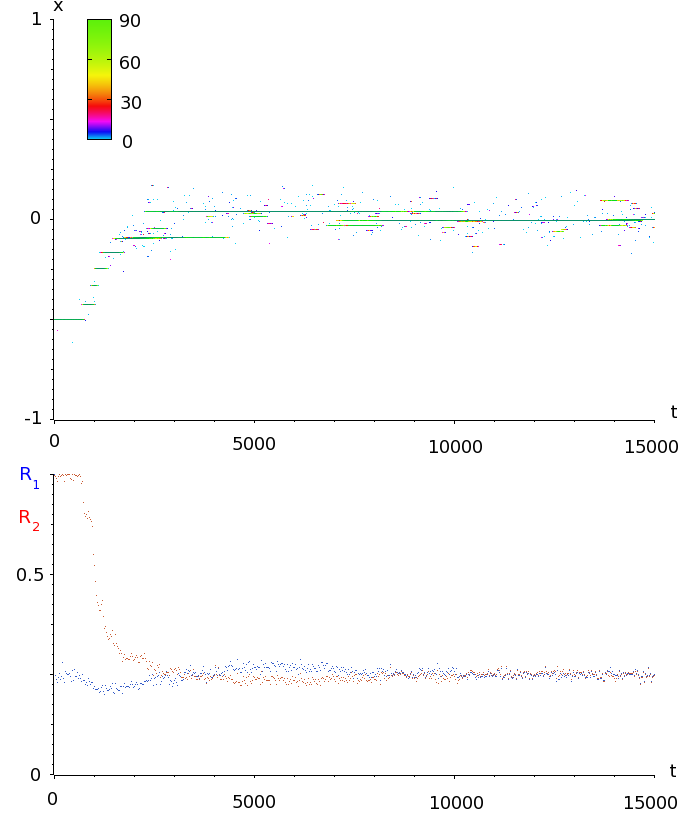}
    \medskip\\
    $K=300,\ p=0.0003,\ \sigma=0.06,\ a=1$
   \end{center}
  \end{minipage}
  \caption{Simulations of the individual-based model with rare mutations for two different values of the parameter $a$. Upper panels:
    time evolution of the trait density in the population. Lower panels: time evolution of the resources concentrations.}
\end{figure}

\noindent The limit process  takes values in the set of coupled equilibria of population measures and
resources, defined by
\begin{equation*}
  {\cal M}_0:=\left\{\left(\sum_{i=1}^n\bar{u}_i(\mathbf{x})\delta_{x_i},\ \mathbf{\bar R}(\mathbf{x})\right);
    \:n\geq 1,\:x_1,\ldots,x_n\in{\cal X}\mbox{\ coexist}\right\},
\end{equation*}
where $\mathbf{\bar u(x),\bar R(x)}$ were defined in Theorem~\ref{equilibrium}. Evolution proceeds by jumps of the support of the
population measure. \bigskip

\noindent We introduce the following assumption:
\begin{equation}
  \label{B}
  \begin{aligned}
    & \text{\it For all } (x_1,\ldots,x_n)\in\mathcal{D}_n, \text{\it\ for Lebesgue almost every trait\ } x_{n+1} \in {\cal
      X}, \text{\it\ with the}\\
    & \text{\it notation\ }\mathbf{x}=(x_1,\ldots,x_n, x_{n+1}), \text{\it\ we assume that for all\ } i\in \{1, \cdots, n+1\},
     \\ & \text{\it either\ } \bar u_i(\mathbf{x})>0 \text{\it\ or\ }
    -d(x_i)+\sum_{k=1}^r \eta_k(x_i)\bar R_k(\mathbf{x})< 0 \text{\it\ if\ } \bar u_i(\mathbf{x})=0.
  \end{aligned}
\end{equation}

% \emph{\be
% && \hbox{ For all } (x_1,\ldots,x_n)\in\mathcal{D}_n, \hbox{ for Lebesgue almost every mutant trait } x_{n+1} \in {\cal X},\nonumber \\
% &&  \hbox{ denoting
%   by } \mathbf{x}=(x_1,\ldots,x_n, x_{n+1})
%     \hbox{ the new vector of traits after mutation, we assume} \nonumber \\
% && \hbox{ that for all } i\in \{1, \cdots,
%   n+1\},  \hbox{ either } \bar u_i(\mathbf{x})>0 \hbox{ or } -d(x_i)+\sum_{k=1}^r \eta_k(x_i)\bar R_k(\mathbf{x})< 0  \hbox{ if } \bar u_i(\mathbf{x})=0.\nonumber\\
%   &&\label{B}
% \ee}

\me In particular, this assumption means that, with the previous notations, defining
$I=\left\{i\in\{1,\ldots,n+1\}:-d(x_i)+\sum_{k=1}^r \eta_k(x_i)\bar R_k(\mathbf{x})< 0\right\}$, the traits $(x_i)_{i\not\in I}$
coexist.

\me This assumption implies that we will always be in a clear-cut, non degenerate situation: At equilibrium after the invasion of a
new mutant trait $x_{n+1}$ in a population with traits $x_1,\ldots,x_n$, for all of these traits, either it survives or the
corresponding growth rate (and eigenvalue of the Jacobian) is strictly negative. This allows to use Theorem \ref{thm:GD}-(ii).

\me
\begin{thm}
  \label{thm:PES-fdd}
  Assume 
  \eqref{A1},\eqref{A2},\eqref{A3},\eqref{A4},\eqref{B}. Take coexisting $\ x_1,\ldots,x_n\in{\cal X}$  and assume that $\nu^K_0=\sum_{i=1}^n
  u^K_i\delta_{x_i}$ with $u^K_i\rightarrow \bar{u}_i(\mathbf{x})$ in probability for all $1\leq i\leq n$. Assume also that
  $R_k^K(0)\rightarrow\bar{R}_k(\mathbf{x})$ in probability when $K\rightarrow+\infty$ for all $1\leq k\leq r$. Assume finally that
  \begin{equation}
    \label{eq:u_K-K}
    \forall V>0,\quad \log K\ll \frac{1}{K\mu_K}\ll \exp(VK), \quad \hbox{ as } K\to \infty.
  \end{equation}
  Then, $((\nu^K_{t/K u_K},\mathbf{R}^K(t/K\mu_k));t\geq 0)$ converges to the ${\cal M}_0$-valued Markov pure jump process
  $((\Lambda^\sigma_t,{\cal R}^\sigma(t)); t\geq 0)$ defined as follows: $\Lambda_0=\sum_{i=1}^n\bar{u}_i(\mathbf{x})\delta_{x_i}$, ${\cal
    R}^\sigma(t)=\mathbf{\bar R}(\text{Supp}(\Lambda^\sigma_t))$ where $\text{Supp}(\Lambda^\sigma_t)$ is the support
  of $\Lambda^\sigma_t$,
% any vector $(x_1,\ldots,x_n)$ with distinct
%   coordinates such that the support of $\Lambda_t$ is $\{x_1,\ldots,x_n\}$ which coexist,
  and the process $\Lambda^\sigma$ jumps for all $j\in\{1,\ldots,n\}$
  $$
  \hbox{from }\ \sum_{i=1}^n \bar u_i(\mathbf{x})\delta_{x_i}\ \hbox{ to }\ \sum_{i=1}^{n} \bar u_i(x_1,\ldots,x_n,x_j+h)\delta_{x_i}
  +\bar u_{n+1}(x_1,\ldots,x_n,x_j+h)\delta_{x_j+h}
  $$
  with jump measure
  \begin{equation}
    \label{taux}
    p(x_j)\left(\sum_{k=1}^r\eta_k(x_j)\bar R_k(\mathbf{x})\right)\bar u_j(\mathbf{x})\frac{[f(x_j+h;\mathbf{x})]_+}
    {\sum_{k=1}^r\eta_k(x_j+h)\bar R_k(\mathbf{x})}m_\sigma(x_j,h)dh.
  \end{equation}
  The convergence holds in the sense of finite dimensional distributions on ${\cal M}_F$, the set of finite positive measures on
  $\mathcal{X}$ equipped with the topology of the total variation norm.
 %   Moreover, this convergence also holds in law for the $M_1$ metric on
 %   $\mathbb{D}([0,+\infty),{\cal M}^F)$.
\end{thm}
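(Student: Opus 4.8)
The plan is to follow the strategy developed for Lotka--Volterra systems in~\cite{CM11}, replacing the competition-kernel-specific arguments by the deterministic stability results of Section~\ref{sec:asympt-chemostat} and the large-deviation estimates of Section~\ref{sec:GD}. Since the limit is a pure jump Markov process and we only claim convergence of finite-dimensional distributions, it suffices by the (strong) Markov property to analyze a single excursion of the rescaled process $(\nu^K_{t/K\mu_K},\mathbf{R}^K(t/K\mu_K))$ starting from a configuration close to a coexistence equilibrium $(\sum_i\bar u_i(\mathbf{x})\delta_{x_i},\bar{\mathbf{R}}(\mathbf{x}))$, and to identify, rescaled by $K\mu_K$, the law of the time until the next macroscopic change of the support together with the resulting new configuration.

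First, the \emph{resident phase}. Starting near $(\bar{\mathbf u}(\mathbf x),\bar{\mathbf R}(\mathbf x))$ and before the first mutation, Theorem~\ref{thm:GD}-(i) (or directly Proposition~\ref{prop:FW} with $a^K\equiv 0$) shows that $(\nu^K_t,\mathbf{R}^K(t))$ stays in a small neighbourhood $B_\varepsilon(\mathbf x)$ for a time at least $e^{V_\varepsilon K}\wedge T^K_{\mathrm{mut}}$; since $1/(K\mu_K)\ll e^{V_\varepsilon K}$ by~\eqref{eq:u_K-K}, the process is still near equilibrium when the first mutation occurs. On the mutation time scale, a mutant of trait $x_j+h$ with $h$ in $dh$ is produced at a rate that converges to $\bar u_j(\mathbf{x})\,p(x_j)\big(\sum_k\eta_k(x_j)\bar R_k(\mathbf{x})\big)\,m_\sigma(x_j,h)\,dh$, i.e.\ mutations arrive at an $O(1)$ rate.

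Second, the \emph{fate of a mutant}. When a single individual of trait $y=x_j+h$ appears while the residents are at density $\bar{\mathbf u}(\mathbf x)$, as long as the mutant density is $o(1)$ and the resources remain within $O(\varepsilon)$ of $\bar{\mathbf R}(\mathbf x)$, the number of mutants is well approximated by a binary branching process with birth rate $\sum_k\eta_k(y)\bar R_k(\mathbf x)$ and death rate $d(y)$, hence with Malthusian parameter $f(y;\mathbf x)$. If $f(y;\mathbf x)\le 0$, this process dies out quickly and the configuration returns to $(\bar{\mathbf u}(\mathbf x),\bar{\mathbf R}(\mathbf x))$; if $f(y;\mathbf x)>0$, with probability converging to $[f(y;\mathbf x)]_+/\sum_k\eta_k(y)\bar R_k(\mathbf x)$ the mutant reaches density $\varepsilon$. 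Then Theorem~\ref{largepop} applies and the process follows $\mathrm{CH}(n+1,x_1,\ldots,x_n,y)$, which by Theorem~\ref{equilibrium} converges to $(\bar{\mathbf u}(\mathbf x,y),\bar{\mathbf R}(\mathbf x,y))$; by Assumption~\eqref{B}, the traits whose equilibrium density vanishes have strictly negative invasion exponent, so Theorem~\ref{thm:GD}-(ii) forces them to go extinct in a time $O(\log K)$. The three successive phases (growth of the mutant from $1/K$ to $\varepsilon$, deterministic relaxation, extinction of the disadvantaged traits) each last a time $O(\log K)$, which by~\eqref{eq:u_K-K} is $o(1/(K\mu_K))$, so on the mutation time scale the transition to the new equilibrium is instantaneous.

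Third, \emph{assembling the jump process}. Since each mutant's fate is settled in time $O(\log K)=o(1/(K\mu_K))$, with probability tending to $1$ no other mutation occurs in the meantime, so successive mutation events can be treated independently; the number of failed mutations before a successful invasion is geometric with a small success probability, and multiplying the $O(1)$ mutation rate by this probability, thinned in $h$ by the branching-process survival probability and with target configuration identified through Theorem~\ref{equilibrium}, yields exactly the jump measure~\eqref{taux} and the jump described in the statement; combining this with the Markov property over successive excursions gives the claimed finite-dimensional convergence. The main obstacle is the rigorous control of the mutant invasion across the change of scaling regimes: coupling the early microscopic dynamics of the mutant with a branching process while the residents and the resources are only known to stay within $O(\varepsilon)$ of equilibrium, then handing over to the deterministic approximation of Theorem~\ref{largepop} once the mutant is macroscopic, and showing that all error terms are uniform in $h$ and vanish as $\varepsilon\to 0$ and $K\to\infty$ in the right order. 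The coupled resource dynamics is precisely what makes the deterministic inputs (Theorem~\ref{equilibrium}, Theorem~\ref{thm:GD}, Proposition~\ref{prop:FW}) necessary; granting them, the remaining arguments are adaptations of~\cite{CM11}, carried out in Appendix~\ref{sec:pf-PES}.
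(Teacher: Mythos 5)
Your proposal follows essentially the same route as the paper's proof sketch in Appendix~\ref{sec:pf-PES}: a resident phase controlled by Theorem~\ref{thm:GD}-(i)/Proposition~\ref{prop:FW} giving the exponential mutation clock and parent distribution, a mutant phase combining branching-process comparison (survival probability $[f(y;\mathbf{x})]_+/\sum_k\eta_k(y)\bar R_k(\mathbf{x})$), Theorem~\ref{largepop}, Theorem~\ref{equilibrium} and Theorem~\ref{thm:GD}-(ii) under Assumption~\eqref{B}, all glued together by the time-scale separation~\eqref{eq:u_K-K} and the Markov property, as in~\cite{C06,CM11}. This matches the paper's argument, including the key non-standard ingredient of the perturbed resource dynamics handled by Proposition~\ref{prop:FW} during the mutant's microscopic growth phase.
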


\noindent Following~\cite{CM11}, we call this process \emph{Polymorphic Evolution Sequence}
(PES).   The
main steps of the proof are recalled in Appendix~\ref{sec:pf-PES}.
Note that the assumption \eqref{eq:u_K-K} is quite natural in view of Theorem \ref{thm:GD} and ensures that mutation occur after competition eliminates disadvantageous traits (Theorem \ref{thm:GD}-(ii)) and before the population densities drift away from equilibrium (Theorem \ref{thm:GD}-(i)).

\me As will appear in the proof, we may interpret the transition rates of the PES as follows: starting from an equilibrium population
$ \sum_{j=1}^n \bar u_j(\mathbf{x})\delta_{x_j}$, the process waits an exponential time of parameter
$$
\sum_{j=1}^n p(x_j) \left(\sum_{k=1}^r\eta_k(x_j)\bar R_k(\mathbf{x})\right)\bar u_j,
$$
interpreted as a mutation time. The trait of the parent is selected in the population as $x_I$, where $I$ is a random variable in
$\{1,\ldots,n\}$ with distribution
$$
\mathbb{P}(I=i)=\frac{p(x_i) \left(\sum_{k=1}^r\eta_k(x_i)\bar R_k(\mathbf{x})\right)\bar u_i} {\sum_{j=1}^n p(x_j)
  \left(\sum_{k=1}^r\eta_k(x_j)\bar R_k(\mathbf{x})\right)\bar u_j}.
$$
This distribution favors the traits with higher resource consumption.

\me Then, the mutant trait is given by $x_I+H$, where $H$ is distributed as $m_\sigma(x_I,h)dh$. An actual jump occurs if the mutant
population, initially composed of a single individual, does not go extinct and invades. A comparison argument between the mutant
population size and branching processes allows to compute the invasion probability as the survival probability of a branching
process, given by
$$
\frac{[f(x_I+H;\mathbf{x})]_+} {\sum_{k=1}^r\eta_k(x_I+H)\bar R_k(\mathbf{x})}.
$$
After the invasion, the new support of the process $\Lambda_t$ is given by the set of traits with nonzero densities in the vector of
densities $\mathbf{\bar u}(x_1,\ldots,x_n,x_I+H)$.

% \noindent Note that it follows from Assumption~(A) and from~(\ref{eq:F=0}) that the jump rates of the process $\Lambda$ are
% bounded. Moreover, by Assumption~(B2), for almost all mutant traits $y$ such that $f(y;\mathbf{x})>0$, $\sum_{i=1}^d
% n^*_i(x_1,\ldots,x_d,y)\delta_{x_i} +n^*_{d+1}(x_1,\ldots,x_d,y)\delta_{y}\in{\cal M}_0$. Thus, the PES is well-defined on
% $\mathbb{R}_+$ and belongs almost surely to ${\cal M}_0$ for all time.

\subsection{The limit of small mutations: the canonical equation}
\label{sec:small-mut}

Until the end of the paper, we  assume by simplicity that the trait space is one-dimensional, i.e.\ ${\cal X}\subset\mathbb{R}$.

%\subsubsection{Convergence to the canonical equation of adaptive dynamics}
\label{sec:can-eq}

\noindent  Our goal is to study the PES under an additional  biological assumption of small mutations ($\sigma\to 0$), which is  standard in this
context~\cite{Mal96,DL96,diekmann-jabin-al-05}. 
We prove that when $\sigma$ tends to zero, the PES converges on the time scale $\frac{t}{\sigma^2}$,
to the solution of a (deterministic) ODE, called \emph{canonical equation of adaptive dynamics}, or simply \emph{canonical equation}.

\me 
By Theorem~\ref{thm:PES-fdd},   we get $\Lambda^\sigma_t=\bar
u(X^\sigma_t)\delta_{X^\sigma_t}$ as long as there is no coexistence of two traits in the population. Since  $m_\sigma(x,h)dh=\frac{1}{\sigma}m(x,\frac{h}{\sigma})dh$, the pure jump Markov process $(X^\sigma_t,t\geq 0)$ has the  infinitesimal generator\begin{equation}
  \label{eq:gene-TSS}
  A^\sigma\phi(x)=\int_{\cal X}(\phi(x+\sigma h)-\phi(x))[g(x+\sigma h;x)]_+ m(x,h)dh,
\end{equation}
where
$$
g(y;x)=p(x)\left(\sum_{k=1}^r\eta_k(x)\bar R_k(x)\right)\bar u(x)\frac{f(y;x)} {\sum_{k}\eta_k(y)\bar R_k(x)}.
$$
The jump process $(X^\sigma_t,t\geq 0)$ is often called ``trait substitution sequence'', or TSS~\cite{Mal96}.

\noindent By Proposition~\ref{prop:dimorph}, the first time of coexistence of two traits in the PES is given by
\begin{align*}
  \tau^\sigma := & \inf\{t\geq 0:|\text{Supp}(\Lambda^\sigma_t)|=2\} \\ = & \inf\{t\geq 0:f(X^\sigma_t;X^\sigma_{t-})>0\text{\ and\
  }f(X^\sigma_{t-};X^\sigma_t)>0\}.
\end{align*}
By Corollary~\ref{cor:coex}, coexistence may only occur in the neighborhood of points $x^*$ such that $\partial_1 f(x^*;x^*)=0$. This
leads to the following definition.

\begin{defi}
  We call a trait $x^*\in{\cal X}$ an evolutionary singularity if $\partial_1 f(x^*;x^*)=0$.%  In the case of multiple coexistence,
  % we say that a set of traits $\{x^*_1,\ldots,x^*_n\}$ which coexist form an evolutionary singular community if
  % $$
  % \forall 1\leq i\leq n,\quad\partial_1 f(x^*_i;x^*_1,\ldots,x^*_n)=0.
  % $$
\end{defi}
%Remark that, since $f(x;x)=0$ for all $x\in {\cal X}$,
%\begin{align}
  %& \partial_1 f(x;x)+\partial_2 f(x;x)=0,\quad\forall x\in{\cal X} \label{eq:der-1-fitn}
  %\\
  %& \partial_{11} f(x;x)+2\partial_{12}
  %f(x;x)+\partial_{22}f(x;x)=0,\quad\forall x\in{\cal X}. \label{eq:der-2-fitn}
%    \end{align}
%Therefore, if $x^*$ is an evolutionary singularity, $\partial_1f(x^*;x^*)=\partial_2 f(x^*;x^*)=0$.

\me  Since $g(x;x)=0$, the form of the
generator~(\ref{eq:gene-TSS}) suggests to scale time as $t/\sigma^2$ in order to obtain a non-trivial
limit generator when $\sigma\rightarrow 0$. 
\begin{thm}[Theorem~4.4 of~\cite{CM11}]
  \label{thm:can-eq}
  Assume~ \eqref{A1},\eqref{A2},\eqref{A3},\eqref{A4},\eqref{B} and that $\Lambda_0^\sigma=\bar{u}(x_0)\delta_{x_0}$ where $x_0$ is \emph{not} and
  evolutionary singularity. Let $x(t)$ be the solution of 
  \begin{equation}
    \label{eq:can-eq}
    \frac{dx(t)}{dt}=\int_{\mathbb{R}}h [h\partial_1g(x(t);x(t))]_+\ m(x(t),h)dh.
  \end{equation}
  such that $x(0)=x_0$. Then,
  \begin{description}
  \item[\textmd{(i)}]
    For any $T>0$,
    $$
    \lim_{\sigma\rightarrow 0}\mathbb{P}(\tau^\sigma>T/\sigma^2)=1,
    $$
    and there exists $\sigma_0$ such that for all $\sigma<\sigma_0$, the process $(X^\sigma_{t/\sigma^2},t\in[0,T])$ is a.s.\
    monotone.
  \item[\textmd{(ii)}] For any $T>0$, the process $(\Lambda^\sigma_{t/\sigma^2},t\in[0,T])$ converges as $\sigma\rightarrow 0$ to the
    deterministic process $(\bar{u}(x(t))\delta_{x(t)},t\in[0,T])$ for the Skorohod topology on $\DD([0,T],{\cal M}_0)$, where ${\cal
      M}_0$ is equipped with the weak topology.
   \end{description}
\end{thm}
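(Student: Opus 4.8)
The plan is to follow the approach of Theorem~4.4 of~\cite{CM11}: identify the limit through convergence of the rescaled generator of the trait substitution sequence $(X^\sigma_t,t\ge 0)$ towards the generator of the canonical equation~\eqref{eq:can-eq}, combined with well-posedness of the associated martingale problem, after first localizing the dynamics away from evolutionary singularities. Write $b(x):=\int_{\RR}h\,[h\,\partial_1 g(x;x)]_+\,m(x,h)dh$ for the right-hand side of~\eqref{eq:can-eq}. Since $g(y;x)=c(y,x)f(y;x)$ with $c(y,x)>0$ continuous and $f(x;x)=0$, one has $\partial_1 g(x;x)=c(x,x)\partial_1 f(x;x)$, so $b$ vanishes exactly on the set of evolutionary singularities, which is closed because $\partial_1 f(\cdot;\cdot)$ is continuous (using~\eqref{A1} and the $C^2$-regularity of $\bar u$, hence of $\bar R_k$, established in Section~\ref{sec:monomorphic} and Lemma~\ref{lem:bar-u-continue}). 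As $x_0$ is not a singularity, uniqueness for~\eqref{eq:can-eq} implies $x(t)$ is a singularity for no $t\ge 0$; by compactness of $\{x(t):t\in[0,T]\}$ we may fix an open interval ${\cal I}\subset{\cal X}$ containing this set and such that $\bar{\cal I}$ contains no singularity, so that $\partial_1 f(x;x)$ (equivalently $\partial_1 g(x;x)$) keeps a fixed sign, say $\ge c>0$, on $\bar{\cal I}$. All subsequent estimates are carried out up to the stopping time $\theta^\sigma$ at which $X^\sigma_{t/\sigma^2}$ exits ${\cal I}$ or $\tau^\sigma$ is reached.

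First I would expand the generator. For $\phi\in C^2_b$ and $x\in{\cal I}$, writing $\phi(x+\sigma h)-\phi(x)=\sigma h\phi'(x)+O(\sigma^2h^2)$, and using $|[a]_+-[b]_+|\le|a-b|$ together with $g(x;x)=0$ to get $[g(x+\sigma h;x)]_+=\sigma[h\,\partial_1 g(x;x)]_++O(\sigma^2h^2)$, one obtains
\[
\frac{1}{\sigma^2}A^\sigma\phi(x)=\phi'(x)\int_{\RR}h\,[h\,\partial_1 g(x;x)]_+\,m(x,h)dh+O(\sigma)=b(x)\phi'(x)+O(\sigma),
\]
uniformly on $\bar{\cal I}$, the remainder being controlled by $\|\phi''\|_\infty$, the uniform bounds on $\eta_k,d,\bar u$ over $\bar{\cal I}$, and the fact that $m(x,\cdot)$ has bounded support (by~\eqref{A1},~\eqref{A2} and the construction of $m_\sigma$). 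Hence $\sigma^{-2}A^\sigma\to A$ with $A\phi=b\,\phi'$, the generator of~\eqref{eq:can-eq}. I would then prove tightness of the stopped processes $(X^\sigma_{(t/\sigma^2)\wedge\theta^\sigma})_\sigma$ in $\DD([0,T],{\cal X})$: the compensated process $\phi(X^\sigma_{t/\sigma^2})-\phi(X^\sigma_0)-\int_0^t\sigma^{-2}A^\sigma\phi(X^\sigma_{s/\sigma^2})ds$ is a martingale whose predictable bracket is $O(\sigma)$ (jumps of size $O(\sigma)$, total rate $O(1/\sigma)$ on $\bar{\cal I}$), which yields the Aldous criterion; the maximal jump size tends to $0$, so every limit point is continuous. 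By the generator convergence, each limit point solves the martingale problem for $A$; since $b$ is locally Lipschitz on ${\cal I}$, this martingale problem is well-posed (standard, cf.~\cite[Ch.~5, 8]{EK86}) with unique solution $x(\cdot)$ from $x_0$. This gives convergence in probability of $X^\sigma_{(\cdot/\sigma^2)\wedge\theta^\sigma}$ to $x(\cdot\wedge\theta)$ in $\DD([0,T],{\cal X})$, and since $x(\cdot)$ stays in the interior of ${\cal I}$ on $[0,T]$, that the exit part of $\theta^\sigma$ does not occur before $T/\sigma^2$ with probability tending to $1$.

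It remains to rule out coexistence and to obtain monotonicity. For $\sigma$ small enough, for every $x\in\bar{\cal I}$ and every $h<0$ in the support of $m(x,\cdot)$, a Taylor expansion of $f(x+\sigma h;x)$ around $f(x;x)=0$ gives $f(x+\sigma h;x)=\sigma h\bigl(\partial_1 f(x;x)+\tfrac12\sigma h\,\partial_1^2 f(\xi;x)\bigr)\le\sigma h\,(c-C\sigma\,\mathrm{diam}({\cal X}))<0$, using $\partial_1 f(x;x)\ge c$ and the uniform $C^2$-bound on $f$ over $\bar{\cal I}$. Hence by Theorem~\ref{thm:PES-fdd} no jump of $X^\sigma$ decreases the trait, so as long as it remains in ${\cal I}$ the process $X^\sigma_{t/\sigma^2}$ is nondecreasing -- this is the claimed a.s.\ monotonicity. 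Moreover, by Corollary~\ref{cor:coex} a coexistence event can only be triggered when the current trait lies in a neighbourhood of an evolutionary singularity; since $\bar{\cal I}$ contains no singularity and $X^\sigma$ remains in ${\cal I}$ up to $T/\sigma^2$ with probability tending to $1$, we conclude $\PP(\tau^\sigma>T/\sigma^2)\to 1$, which is~(i), and also $\PP(\theta^\sigma>T/\sigma^2)\to 1$, closing the bootstrap. On $\{\tau^\sigma>T/\sigma^2\}$ we have $\Lambda^\sigma_{t/\sigma^2}=\bar u(X^\sigma_{t/\sigma^2})\delta_{X^\sigma_{t/\sigma^2}}$ for $t\in[0,T]$, so~(ii) follows from the convergence of $X^\sigma_{\cdot/\sigma^2}$ to $x(\cdot)$ in $\DD([0,T],{\cal X})$, the continuity of $x\mapsto\bar u(x)$, and the continuity of $x\mapsto\bar u(x)\delta_x$ from ${\cal X}$ into $({\cal M}_0,\text{weak topology})$.

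The main obstacle is the apparent circularity between~(i) and~(ii): showing that coexistence does not occur before $T/\sigma^2$ requires knowing that $X^\sigma_{\cdot/\sigma^2}$ stays near $x(\cdot)$, which is itself part of the conclusion. This is resolved, as above, by running all arguments up to the localizing/coexistence stopping time $\theta^\sigma$, proving convergence of the stopped processes, and only then invoking that $x(\cdot)$ stays in the interior of ${\cal I}$ -- away from singularities -- to deduce that $\theta^\sigma$ and $\tau^\sigma$ both exceed $T/\sigma^2$ with probability tending to $1$. A secondary technical point is the non-smoothness of the positive part in the jump rate, handled throughout by the $1$-Lipschitz estimate $|[a]_+-[b]_+|\le|a-b|$, which keeps all Taylor remainders of the stated order; and one must check the uniformity in $x\in\bar{\cal I}$ of the generator expansion, which uses precisely the uniform regularity and positivity furnished by~\eqref{A1},~\eqref{A2}, Section~\ref{sec:monomorphic} and Lemma~\ref{lem:bar-u-continue}.
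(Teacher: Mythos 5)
Your route is the one the paper itself points to (it gives no detailed proof, only the citation of \cite{CM11} plus the remarks on small jumps and on the solutions of~\eqref{eq:can-eq}): localize on an interval ${\cal I}$ around the trajectory of~\eqref{eq:can-eq} containing no evolutionary singularity, expand the rescaled generator $\sigma^{-2}A^\sigma\phi\to b\,\phi'$ using $g(x;x)=0$ and the $1$-Lipschitz bound on $[\cdot]_+$, prove tightness and identify the limit via the well-posed martingale problem for the Lipschitz field $b$, exclude coexistence through the sign of the fitness (Proposition~\ref{prop:sign-fitn-1}, Corollary~\ref{cor:coex}, uniformly on $\bar{\cal I}$ by compactness), and transfer to $\Lambda^\sigma$ by continuity of $x\mapsto\bar u(x)\delta_x$. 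That part is sound; the only small slip is the claim that $b$ vanishes \emph{exactly} at evolutionary singularities, which fails at the boundary of ${\cal X}$ (one-sided mutation support makes $b$ vanish there too), but this is harmless since your argument only needs that singularities are zeros of $b$ and that zeros cannot be reached in finite time.

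The genuine gap is the almost-sure monotonicity in~(i). What you prove is that no jump can decrease the trait \emph{as long as} $X^\sigma_{t/\sigma^2}$ remains in ${\cal I}$, i.e.\ monotonicity on an event whose probability tends to $1$; the sentence ``this is the claimed a.s.\ monotonicity'' conflates the two. For a fixed $\sigma<\sigma_0$, the rescaled TSS performs on $[0,T]$ a number of jumps of order $1/\sigma$ with Poisson-type fluctuations, so with positive (exponentially small, but nonzero) probability it overshoots the range of $x(\cdot)$ by an order-one distance, exits ${\cal I}$ and enters the $O(\sigma)$-neighbourhood of the nearest singularity, where your Taylor expansion no longer forbids mutants on the other side from having positive fitness; on that event your argument gives no control of the direction of jumps. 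Hence your proof delivers ``monotone (and monomorphic) with probability tending to $1$'' -- which is indeed all that parts~(i)--(ii) use downstream -- but not the literal ``a.s.\ monotone for every $\sigma<\sigma_0$'' stated in the theorem; to get the latter you would need an additional argument handling these exceptional paths (or a restriction ensuring no singularity lies in the direction of motion), which is missing from the proposal.
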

Equation~(\ref{eq:can-eq}) is known as the \emph{canonical equation of adaptive dynamics}~\cite{DL96}.

\me This result follows from the convergence of the TSS process $X^\sigma_{t/\sigma^2}$ to $x(t)$ when $\sigma\rightarrow
0$~\cite[Thm.~4.1]{CM11}, from the fact that mutation jumps in the trait space are bounded by $\sigma\text{Diam}(\mathcal{X})$ (by
the definition of the jump measure $m_\sigma$) which converges to 0 when $\sigma \rightarrow 0$ and from a careful study of the solutions of Equation \eqref{eq:can-eq}.

\begin{rem}
  \label{rem:sym-CEAD}
  In the case when $m(x,\cdot)$ is a symmetrical measure on $\mathbb{R}$ for all $x\in{\cal X}$,
  Equation~\textup{(\ref{eq:can-eq})} gets the classical form, heuristically introduced in~\textup{\cite{DL96}},
  \begin{equation*}
    % \label{eq:symmetrical-can-eq}
    \frac{dx(t)}{dt}=\frac{1}{2}K(x(t))\partial_1g(x(t);x(t)),
  \end{equation*}
  where $K(x)$ is the variance of $m(x,h)dh$.
\end{rem}

\subsection{Small mutations and  evolutionary branching}
\label{sec:evol-branch}

We will now introduce the last tools to predict the different behaviors observed in Figure~\ref{sim1} and in particular to characterize the diversification phenomenon of evolutionary branching. Since it is not captured by the canonical equation, evolutionary branching can only occur on a longer time scale and in the neighborhood of equilibria of the canonical equation. 
 Let $x^*:=\lim_{t\rightarrow+\infty}x(t)$ ( well-defined since ${\cal X}$ is compact). We make the  additional assumption
\emph{\be
\label{C} x^* \hbox{ is  in the interior of } {\cal X}, \int_{\mathbb{R}_-}m(x^*,h)dh>0 \hbox{  and } \int_{\mathbb{R}_+}m(x^*,h)dh>0.
\ee}
This assumption means that mutations are always possible from  $x^*$.
Then  in view of the canonical equation \eqref{eq:can-eq},
 $\partial_1 f(x^*; x^*)=0$, i.e.\ $x^*$ is an evolutionary singularity.

\me
The linear stability condition for the equilibrium $x^*$ implies that
$$
\partial_{11}f(x^*;x^*)+\partial_{12}f(x^*;x^*)\leq 0.
$$
Differentiating twice the equation $f(x;x)=0$ implies that
$$
\partial_{11} f(x;x)+2\partial_{12}
  f(x;x)+\partial_{22}f(x;x)=0,\quad\forall x\in{\cal X},$$
and so
$$
\partial_{11}f(x^*;x^*)\leq\partial_{22}f(x^*;x^*).
$$
We shall leave the degenerate case
$\partial_{22}f(x^*;x^*)=\partial_{11}f(x^*;x^*)$ for further studies, and assume below that
\begin{equation}
  \label{eq:hyp-evol-br}
  \partial_{11}f(x^*;x^*)<\partial_{22}f(x^*;x^*).  
\end{equation}
\bigskip

Let us recall the definition of evolutionary branching introduced in \cite{CM11}.

\begin{defi}
  \label{def:br}
  Fix $\sigma>0$ and $x^*$ an evolutionary singularity. For all $\eta>0$, we say that there is $\eta$-branching at $x^*$ for the PES
  $\Lambda^\sigma$ if
  \begin{itemize}
  \item there exists $t_1>0$ such that the support of the PES at time $t_1$ is composed of a single point belonging to
    $[x^*-\eta,x^*+\eta]$;
  \item there exists $t_2>t_1$ such that the support of the PES at time $t_2$ is composed of exactly $2$ points separated by a
    distance of more than $\eta/2$;
  \item between $t_1$ and $t_2$, the support of the PES is always a subset of $[x^*-\eta,x^*+\eta]$, and is always composed of at
    most $2$ traits, and has nondecreasing (in time) diameter.
  \end{itemize}
\end{defi}
This definition only considers \emph{binary} evolutionary branching. The next theorem proves that, in the neighborhood of an
evolutionary singularity, evolutionary branching of a mo\-no\-mor\-phic population into a $n$-morphic population with $n\geq 3$ is
impossible (at least when the trait space is one-dimensional). Note that the notion of evolutionary branching requires the
coexistence of two traits, but also that these two traits diverge from one another.

\begin{thm}[Evolutionary branching criterion]
  \label{thm:br}
  Assume~\eqref{A1},\eqref{A2},\eqref{A3},\eqref{A4},\eqref{B} and~\eqref{C}. Assume also that $\Lambda^\sigma_0=\bar{u}(x_0)\delta_{x_0}$ and that the
  canonical equation~(\ref{eq:can-eq}) with initial condition $x_0$ converges to an evolutionary singularity $x^*$ in the interior of
  ${\cal X}$. Assume finally that $x^*$ satisfies~(\ref{eq:hyp-evol-br}) and
  \begin{equation} 
    \label{eq:cond2}
    \partial_{22}f(x^*;x^*)+\partial_{11}f(x^*;x^*)\not=0.
  \end{equation}
  Then, for all sufficiently small $\eta$, there exists $\sigma_0>0$ such that for all $\sigma<\sigma_0$,
  \begin{description}
  \item[\textmd{(a)}] if $\partial_{11}f(x^*;x^*)>0$, $\PP(\eta\mbox{-branching at\ } x^* \mbox{\ for\ }
    \Lambda^\sigma)=1$.
  \item[\textmd{(b)}] if $\partial_{11}f(x^*;x^*)<0$, $\PP(\eta\mbox{-branching at\ } x^* \mbox{\ for\ } \Lambda^\sigma)=0$.
    Moreover,
    $$
    \PP\big(\forall t\geq\theta_\eta^\sigma(x^*),\ \mbox{\textup{Card}}
    (\mbox{\textup{Supp}}(\Lambda^\sigma_{t}))\leq 2\ \mbox{and}\ \mbox{\textup{Supp}}(\Lambda^\sigma_{t})
    \subset(x^*-\eta,x^*+\eta)\:\big)=1,
    $$
    where
    $$
    \theta^\sigma_{\eta}(x^*)=\inf\{t\geq 0,\ \mbox{\textup{Supp}}(\Lambda^\sigma_t)\cap(x^*-\eta,x^*+\eta)
    \not=\emptyset\}.
    $$
  \end{description}
\end{thm}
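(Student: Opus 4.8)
The plan is to follow the scheme of the branching criterion in~\cite{CM11}, replacing the explicit Lotka--Volterra computations by the long-time behavior of the chemostat system from Section~\ref{sec:asympt-chemostat}; the full argument is deferred to Appendix~\ref{app:br}, and the preliminary sign estimates on $f$ to Appendix~\ref{sec:sign-fitness}. The first step is a reduction to a neighborhood of $x^*$: by Theorem~\ref{thm:can-eq} the canonical equation started at $x_0$ converges to $x^*$, the rescaled monomorphic PES stays close to $x(t)$, and the first coexistence time $\tau^\sigma$ exceeds $T/\sigma^2$ with probability tending to~$1$; combined with a recurrence argument for the TSS near the convergence-stable point $x^*$, this shows that for $\sigma$ small the PES almost surely reaches a monomorphic state $\bar u(x_1)\delta_{x_1}$ with $x_1\in(x^*-\eta,x^*+\eta)$, and from there everything is local. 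The basic computational tool is the Taylor expansion of the fitness at the singularity: since $f(x;x)\equiv 0$ one has $\partial_1 f(x^*;x^*)=\partial_2 f(x^*;x^*)=0$ and $\partial_{11}f+2\partial_{12}f+\partial_{22}f=0$ at $(x^*,x^*)$, whence
$$
f(y;x)=\tfrac12(y-x)\bigl(\partial_{11}f(x^*;x^*)(y-x^*)-\partial_{22}f(x^*;x^*)(x-x^*)\bigr)+o\bigl((|y-x^*|+|x-x^*|)^2\bigr).
$$
From this expansion together with~\eqref{eq:hyp-evol-br} and~\eqref{eq:cond2} one reads off the shapes of the pairwise invasibility and coexistence regions near $x^*$ (this is Appendix~\ref{sec:sign-fitness}, which also gives Corollary~\ref{cor:coex}: close to $x^*$, coexistence of two traits forces both to be close to $x^*$).

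\emph{Case~(b), $\partial_{11}f(x^*;x^*)<0$.} Here $x^*$ is convergence stable and locally evolutionarily stable. I would argue that the selection gradient $\partial_1 f(x;x)\approx\tfrac12(\partial_{11}f-\partial_{22}f)(x-x^*)$ points towards $x^*$, so that a monomorphic TSS having entered $(x^*-\eta,x^*+\eta)$ never leaves it (jumps are $O(\sigma)$), and every successful mutant lies between the resident and $x^*$, so substitutions move the resident monotonically towards $x^*$. If moreover $\partial_{22}f(x^*;x^*)>-\partial_{11}f(x^*;x^*)$, a transient dimorphic coexistence may occur; Theorem~\ref{thm:GD} with $n=2$ then shows the dimorphic population equilibrates at $(\bar{\mathbf u},\bar{\mathbf R})(x_1,x_2)$ before the next mutation, and the dimorphic selection gradients (again from the expansion) drive the two traits back together. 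In all cases $\mathrm{Supp}(\Lambda^\sigma_t)$ stays in $(x^*-\eta,x^*+\eta)$ with at most two points and diameter never reaching $\eta/2$, for $t\geq\theta^\sigma_\eta(x^*)$; this is~(b).

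\emph{Case~(a), $\partial_{11}f(x^*;x^*)>0$.} Now $x^*$ is convergence stable but not evolutionarily stable. Starting from the monomorphic state just described, the sign analysis makes the pairwise coexistence region near $x^*$ non-empty, and since the monomorphic TSS stays in $(x^*-\eta,x^*+\eta)$ and makes infinitely many jumps there, almost surely it eventually produces a mutant $x_2$ coexisting with $x_1$ in $(x^*-\eta,x^*+\eta)$. Applying Theorem~\ref{thm:GD}(i)--(ii) with $n=2$ (its non-degeneracy hypotheses following from~\eqref{B} and Proposition~\ref{prop:A4}), the dimorphic population reaches $(\bar{\mathbf u},\bar{\mathbf R})(x_1,x_2)$ and stays there until the next mutation. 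Then I would analyse the dimorphic PES via the ODE system extending the canonical equation (Section~\ref{sec:small-mut}) and the expansion of $f(y;x_1,x_2)$ near $(x^*,x^*,x^*)$: when $\partial_{11}f(x^*;x^*)>0$ the two selection gradients $\partial_1 f(x_i;x_1,x_2)$ point in opposite directions, so the two traits move apart, while Theorem~\ref{equilibrium} and the openness of ${\cal D}_2$ (Lemma~\ref{lem:bar-u-continue}) keep them coexisting as $|x_1-x_2|$ grows. This produces a time $t_2$ at which $\mathrm{Supp}(\Lambda^\sigma_{t_2})$ consists of two points at distance $>\eta/2$, with $\mathrm{Supp}(\Lambda^\sigma_t)\subset(x^*-\eta,x^*+\eta)$ of cardinality $\leq 2$ and nondecreasing diameter in between, i.e.\ $\eta$-branching.

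\emph{Main obstacle.} The delicate point — and the only one genuinely different from~\cite{CM11} — is the dimorphic phase of case~(a): one must show \emph{simultaneously} that the two emerging traits stay in ${\cal D}_2$ (coexistence preserved) and that their separation increases monotonically, working only with the implicitly defined equilibrium $(\bar{\mathbf u},\bar{\mathbf R})$. My plan for this is to combine the uniqueness of the coexistence equilibrium (Assumption~\eqref{A4} and Proposition~\ref{prop:A4}), its $C^2$ regularity and boundedness (Lemma~\ref{lem:bar-u-continue}), the Lyapunov-function convergence of Theorem~\ref{equilibrium} and the large-deviation stability estimate Theorem~\ref{thm:GD}, all made uniform over a neighborhood of $(x^*,x^*)$ in ${\cal D}_2$; this bookkeeping is the content of Appendix~\ref{app:br}.
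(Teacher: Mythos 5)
Your skeleton (reduction to a neighborhood of $x^*$ via the canonical equation, a monomorphic phase confined to $(x^*-\eta,x^*+\eta)$, then a case analysis on the sign of $\partial_{11}f(x^*;x^*)$) coincides with the paper's, which itself simply recalls the scheme of Theorem~4.9 of~\cite{CM11}. The problem is that the only steps which are genuinely new in the chemostat setting --- the paper isolates them as Steps~3.(b) and~4.(c) of Appendix~\ref{app:br}, because in~\cite{CM11} they were handled by Zeeman's classification of three-dimensional competitive Lotka--Volterra systems~\cite{zeeman-93}, unavailable here --- are exactly the steps you do not carry out. Nothing in your argument establishes the two facts on which both conclusions (a) and (b) rest: that three traits $x,y,z\in(x^*-\eta,x^*+\eta)$ never coexist (so that after a mutant invades a dimorphic resident the support keeps cardinality at most $2$), and \emph{which} of the three traits goes extinct (so that the diameter of the support is nonincreasing in case (b) and nondecreasing in case (a)). Your substitutes --- ``the dimorphic selection gradients drive the two traits back together/apart'' and ``Theorem~\ref{equilibrium} and the openness of ${\cal D}_2$ keep them coexisting as $|x_1-x_2|$ grows'' --- are heuristics about a $\sigma\to0$ dimorphic dynamics, not statements about the PES jump chain at fixed $\sigma<\sigma_0$; in particular openness of ${\cal D}_2$ does not show that the post-invasion pair still coexists, nor does it identify the new equilibrium $\bar{\mathbf u}(x,y,z)$, and deferring this to ``bookkeeping'' combining uniqueness, regularity, Lyapunov convergence and Theorem~\ref{thm:GD} is precisely the part of the proof that is missing.

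The paper's argument for these steps is short but concrete, and you would need it (or an equivalent): from $f(x;y)=(x-y)\int_0^1\partial_1f(y+u(x-y);y)\,du$ one gets
\begin{equation*}
  \frac{\partial}{\partial x}\Bigl(\frac{f(x;y)}{y-x}\Bigr)=-\int_0^1 u\,\partial_{11}f(y+u(x-y);y)\,du,
\end{equation*}
together with the analogous identity involving $\partial_{22}f$; combined with~\eqref{eq:hyp-evol-br} these monotonicity relations give the signs of the pairwise fitnesses $f(z;x)$, $f(z;y)$, $f(y;z)$, $f(x;z)$ for a mutant $z\in(x,y)$ (case (b)) or $z\notin[x,y]$ (case (a)). Proposition~\ref{prop:sign-fitn-2}~(iii) then gives the signs of the dimorphic fitnesses $f(y;x,z)$, $f(x;y,z)$, and the characterization in Theorem~\ref{equilibrium} of $\bar{\mathbf u}(x,y,z)$ as the unique equilibrium satisfying the stability condition~\eqref{stable} identifies it explicitly as $(\bar u_1(x,z),0,\bar u_2(x,z))$, $(0,\bar u_1(y,z),\bar u_2(y,z))$ or $(0,0,\bar u(z))$. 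This simultaneously rules out coexistence of the triple and pins down which branch survives, yielding the monotone-diameter claims in both cases. Until you supply this sign analysis, the central assertion of Theorem~\ref{thm:br} is assumed rather than proved.
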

This criterion appeared for the first time in~\cite[Section 3.2.5]{Mal96} with an heuristic justification. Locally
around $x^*$, one of the two following events can occur almost surely: either there is binary evolutionary branching  or not. Coexistence can occur in 
both cases, but our proof shows that, after coexistence, the diameter of the support of the PES is a.s. non-decreasing (resp. non-increasing) in the first (resp. second) case.
 % The next remark makes explicit the difference between the evolutionary branching criterion and the coexistence
% criterion. 

\begin{rem}
  \label{prop:coex}
  It can be proved using Proposition~\ref{prop:sign-fitn-2}~(i)  that 
  \begin{itemize}
  \item if $\partial_{11}f(x^*;x^*)+\partial_{22}f(x^*;x^*)>0$, then for all neighborhood
    ${\cal U}$ of $x^*$ in $\mathcal{X}$, there exist $x,y\in{\cal U}$ that coexist and
  \item if $\partial_{11}f(x^*;x^*)+\partial_{22}f(x^*;x^*)<0$, then there exists a
    neighborhood ${\cal U}$ of $x^*$ in $\mathcal{X}$ such that any $x,y\in{\cal U}$ do not
    coexist.
  \end{itemize}
  Comparing this result with the criterion of Theorem~\ref{thm:br}, one has of course that
  evolutionary branching implies coexistence, but the converse is not true.
\end{rem}

\noindent As in~\cite{CM11}, the proof of Theorem~\ref{thm:br} is based on local expansions of
the fitness functions $f(y;x)$ and $f(z;x,y)$ in the neighborhood of $x^*$, given in our case
in Propositions~\ref{prop:sign-fitn-1} and~\ref{prop:sign-fitn-2} in the Appendix. The main
steps of the proof of Theorem \ref{thm:br} are given in Appendix \ref{app:br}.

\subsection{Back to our example}
\label{sec:example-bis}

Let us come back to the example developed in Subsection~\ref{sec:example-1}. The death rate of an individual with trait $x$ has the
form $d(x) = \frac{1}{2}+ ax^2$ with $a>0$. Let us compute in this case the quantities we are interested in. By symmetry
considerations, we have that $x^*=0$ is an evolutionary singularity. It then follows from~(\ref{eq:eq-mono}) that $\bar u(0)=3$ and
from~(\ref{fitness}) that
$$
\partial_{11}f(0;0)=-d''(0)+\frac{\eta''_1(0)+\eta''_2(0)}{1+\eta_1(0)\bar u(0)}=1-2a.
$$
For symmetry reasons again, we also have $\bar u'(0)=0$, from which we deduce that
$$
\partial_{22}f(0;0)=\frac{3}{2}-\frac{\bar u''(0)}{8}.
$$
Differentiating twice the relation
$$
\left(\frac{1}{2}+ax^2\right)(1+(x-1)^2\bar u(x))(1+(x+1)^2\bar u(x))=(x-1)^2(1+(x+1)^2\bar u(x))+(x+1)^2(1+(x-1)^2\bar u(x)),
$$
we obtain $\bar u''(0)=-4-16a$, which give $\partial_{22}f(0;0)=4+2a$.

\me Therefore, the inequality $\partial_{22}f(0;0)>\partial_{11}f(0;0)$ is always satisfied, and there is evolutionary branching
(resp.\ no evolutionary branching) at $x^*=0$ if $a<1/2$ (resp.\ $a>1/2$). In the case where $a<1/2$, the death rate does not
increase too much as the trait $x$ deviates from $0$. So the population is better off dividing into two specialized populations, each
consuming better either resource $1$ or resource $2$. In addition, since $\partial_{11}f(0;0)+\partial_{22}f(0;0)=5>0$, coexistence
is always possible in the neighborhood of $x^*=0$. Therefore, the simulations of Figures~\ref{sim1} and~\ref{fig:sim-mut-rares} are
consistent with Theorem~\ref{thm:br} and Proposition~\ref{prop:coex}.

\bi
{\bf Acknowledgements:} This work benefited from the support of the ANR MANEGE (ANR-09-BLAN-0215) and from the Chair "Mod\'elisation math\'ematique et biodiversit\'e" of Veolia Environnement - Ecole polytechnique - Mus\'eum national d'Histoire naturelle - Fondation X.

\appendix
\section*{Appendix}

\section{Sketch of the proof of Theorem~\ref{thm:PES-fdd}}
\label{sec:pf-PES}

We now give the general idea of the proof of Theorem~\ref{thm:PES-fdd}, extending the biological heuristics of~\cite{Mal96},
rigorously proved in~\cite{C06} in the case of Lotka-Volterra competition.

\me Let us roughly describe the successive steps of mutation, invasion and competition. The two steps of the invasion of a mutant in
a given population are firstly the stabilization of the resident population before the mutation and secondly the invasion of the
mutant population after the mutation.

\me Fix $\eta>0$. In the first step, assuming that $n$ traits $x_1,\ldots,x_n$ coexist,  Assumption $\frac{1}{K \mu_K}\ll e^{VK}$ and Theorem \ref{thm:GD} ensure that the population
densities $(\langle\nu^K_t,\mathbf{1}_{\{x_1\}}\rangle,\ldots,\langle\nu^K_t,\mathbf{1}_{\{x_n\}}\rangle)$ and the vector of
resources $R^K(t)$ belong to the $\eta$-neighborhood of $(\mathbf{\bar u}(\mathbf{x}),\mathbf{\bar R}(\mathbf{x}))$ with high
probability for large $K$ until the next mutant $y$ appears.  Therefore, until $T^K_{mut}$, the  population densities are roughly constant and the rate of mutation is approximated by $\mu_Kp(x_i)\sum_k\eta_k(x_i)\bar R_k(\mathbf{x})K\bar
u_i(\mathbf{x})$. More precisely, the next lemma can be proved
with  the same arguments as in  Lemma~2~(b) and~(c) in~\cite{C06}.
\begin{lem}
  \label{lem:bef-mut}
  Let $\ \mbox{Supp}(\nu^K_0)=\{x_1,\ldots,x_n\}\ $ that coexist. There exists
  $\varepsilon_0>0$ such that, if $( \nu_0^{K},\mathbf{R}^K(0))\in B_{\varepsilon_{0}}(\mathbf{x})$, then, for any
  $\varepsilon<\varepsilon_0$,
  \begin{gather*}
    \lim_{K\rightarrow+\infty}\PP\Big(T^K_{\text{mut}}>\log K;\ \forall t\in[\log K,T^K_{\text{mut}}] \,,\, (
    \nu_t^{K}, \mathbf{R}^K(t)) \in B_{\varepsilon}(\mathbf{x})\Big)=1,
    \\ K\mu_K T^K_{\text{mut}}\ \overset{{\cal L}}{\underset{K\rightarrow \infty}{\Longrightarrow}}\
    \mbox{\textup{Exp}}\Big(\sum_{j=1}^n p(x_j)\sum_{k=1}^r\eta_k(x_j)\bar R_k(\mathbf{x})\Big)
  \end{gather*}
  and
  $$
  \lim_{K\rightarrow+\infty}\PP(\mbox{at time $T^K_\text{mut}$, the mutant is born from trait
    $x_i$})=\frac{p(x_i) \sum_{k=1}^r\eta_k(x_i)\bar R_k(\mathbf{x})}{\sum_{j=1}^n p(x_j)\sum_{k=1}^r\eta_k(x_j)\bar
    R_k(\mathbf{x})}
  $$
  for all $i\in\{1,\ldots,n\}$, where $\:\overset{{\cal L}}{\Longrightarrow}\:$ stands for the convergence in distribution of real
  random variables and $\mbox{\textup{Exp}}(a)$ denotes the exponential distribution with parameter $a$.
\end{lem}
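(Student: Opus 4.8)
The plan is to reduce everything to the Poisson clock governing the first mutation, controlled by the long-time confinement of Theorem~\ref{thm:GD}~(i). As in the proofs of Corollary~\ref{lemma:not-too-many-mutation} and Theorem~\ref{largepop}, up to $T^K_{\text{mut}}$ the process coincides with the pure birth-and-death dynamics obtained by switching off the mutant-birth term in~\eqref{eq:constr-nu}, coupled with the resource equation~\eqref{eq:EDO-R}; the mutation times are then precisely the atoms of the Poisson measure $N_2$ retained by its (state-dependent) intensity. Since $x_1,\ldots,x_n$ coexist, $(\bar{\mathbf u}(\mathbf x),\bar{\mathbf R}(\mathbf x))\in(0,+\infty)^n\times\mathbb{R}^r$, so for $\varepsilon_0$ small $B_{\varepsilon_0}(\mathbf x)$ is a compact subset of $(0,+\infty)^n\times\mathbb{R}^r$ and Theorem~\ref{thm:GD}~(i) provides $T_\varepsilon,V_\varepsilon>0$ with $(\nu^K_t,\mathbf R^K(t))\in B_\varepsilon(\mathbf x)$ for all $t\in[T_\varepsilon,e^{V_\varepsilon K}\wedge T^K_{\text{mut}}]$ with probability tending to $1$.

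First I would prove the confinement assertion. That no mutation occurs on $[0,\log K]$ follows from a first-moment bound as in Corollary~\ref{lemma:not-too-many-mutation}: the probability of a mutation before $\log K$ is at most $C\,K\mu_K\int_0^{\log K}\mathbb{E}\langle\nu^K_s,1\rangle\,ds\le C'K\mu_K\log K$, which tends to $0$ by~\eqref{eq:u_K-K} and the moment bound of Proposition~\ref{prop:moments-nu}. Since $T_\varepsilon$ is a fixed constant, $T_\varepsilon\le\log K$ for $K$ large, so on the event of Theorem~\ref{thm:GD}~(i) the process stays in $B_\varepsilon(\mathbf x)$ throughout $[\log K,e^{V_\varepsilon K}\wedge T^K_{\text{mut}}]$; because $T^K_{\text{mut}}$ is of order $1/(K\mu_K)\ll e^{V_\varepsilon K}$ by~\eqref{eq:u_K-K}, this window extends all the way to $T^K_{\text{mut}}$ with probability tending to $1$, giving the first claim.

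On this confinement event the rescaled intensity $\Lambda^K(s)/(K\mu_K)$ of the retained atoms of $N_2$ stays within $O(\varepsilon)$ of its equilibrium value, namely the constant $\sum_{j=1}^n p(x_j)\sum_{k=1}^r\eta_k(x_j)\bar R_k(\mathbf x)$ of the statement, uniformly over $[\log K,e^{V_\varepsilon K}\wedge T^K_{\text{mut}}]$. Using the thinning representation and the independence of $N_2$ from the resident dynamics, I would write $\PP(T^K_{\text{mut}}>t/(K\mu_K))=\mathbb{E}\exp(-\int_0^{t/(K\mu_K)}\Lambda^K(s)\,ds)$ and split the integral at $\log K$: the initial segment contributes at most $C\,K\mu_K\log K\to0$, while on the remaining segment $\int\Lambda^K(s)\,ds$ equals, up to $O(\varepsilon)$, the stated constant times $t$. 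Sending $K\to\infty$ and then $\varepsilon\to0$ yields the exponential limit for $K\mu_K T^K_{\text{mut}}$. The parent-trait law is obtained simultaneously: conditionally on a retained atom of $N_2$ at time $s$ the parent is $x_i$ with probability proportional to its share of $\Lambda^K(s)$, and on the confinement event these shares converge to the proportions stated in the lemma.

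The main obstacle is the uniform control of the stochastic intensity over the interval $[0,t/(K\mu_K)]$, whose length diverges: one needs confinement not merely on compact time sets (as in Theorem~\ref{largepop}) but up to the exponentially long horizon $e^{V_\varepsilon K}$, which is exactly what the non-standard exit estimate of Proposition~\ref{prop:FW} secures, and one must arrange the two limits so that the $O(\varepsilon)$ fluctuations of $\Lambda^K$ are eliminated by letting $K\to\infty$ before $\varepsilon\to0$. The remaining bookkeeping---joint convergence of the exit time and the parent index---is routine and parallels Lemma~2~(b),(c) of~\cite{C06}.
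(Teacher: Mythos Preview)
Your proposal is correct and follows exactly the route the paper indicates: the paper does not spell out a proof here but simply states that the lemma ``can be proved with the same arguments as in Lemma~2~(b) and~(c) in~\cite{C06}'', and your sketch is precisely a faithful expansion of that argument adapted to the present setting, using Theorem~\ref{thm:GD}~(i) (via Proposition~\ref{prop:FW}) for the long-time confinement and the Poisson structure of $N_2$ conditionally on the mutation-free resident dynamics. The only minor point to make explicit is the apparent circularity in ``$T^K_{\text{mut}}$ is of order $1/(K\mu_K)$'': you should first observe that on the confinement event the mutation intensity is bounded \emph{below} by a positive multiple of $K\mu_K$, so $\PP(T^K_{\text{mut}}>e^{V_\varepsilon K})\to 0$ directly, after which the rest of your argument goes through.
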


\noindent In the second step, we divide the invasion of a given mutant trait $y$ into 2 phases
shown in Fig.~\ref{fig:inv-fix}.

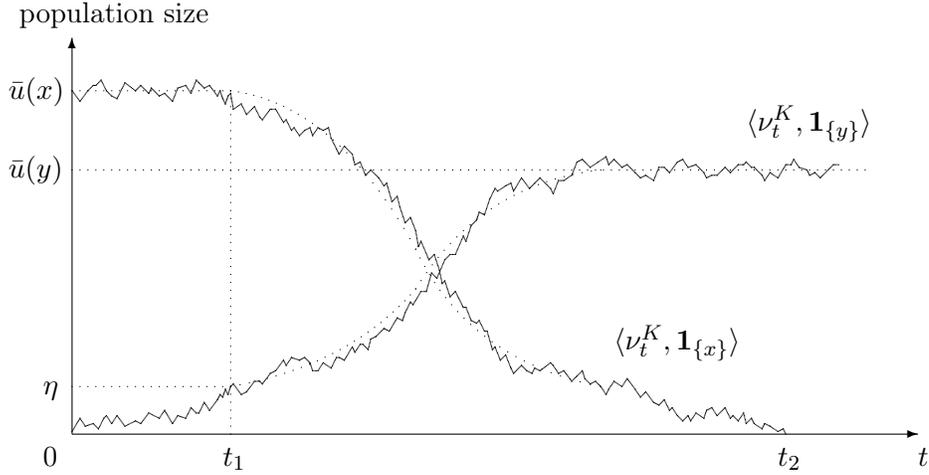
\begin{figure}[ht]
  \begin{center}
    \begin{picture}(350,180)(-20,-10)
      \put(0,0){\vector(1,0){320}} \put(0,0){\vector(0,1){150}}
      \put(-11,-12){0} \put(-11,15){$\eta$}
      \dottedline{3}(0,18)(60,18)
      \put(-24,97){$\bar{u}(y)$} \put(-24,127){$\bar{u}(x)$}
      \dottedline{3}(0,100)(300,100) \dottedline{3}(0,130)(60,130)
      \put(-20,156){population size} \put(57,-12){$t_1$}
      \put(267,-12){$t_2$} \put(320,-12){$t$}
      \dottedline{3}(60,0)(60,130) 
      \qbezier[27](60,130)(100,125)(130,70)
      \qbezier[25](130,70)(150,28)(200,18)
      \qbezier[25](60,18)(105,26)(130,55)
      \qbezier[25](130,55)(155,95)(200,100)
      \dottedline{0.5}(0,1)(3,6)(5,3)(8,4)(10,2)(12,6)(13,7)(15,4)(17,7)%
(20,3)(24,5)(26,4)(29,8)(33,6)(35,9)(38,4)(40,7)(43,6)(45,10)(47,8)(48,8)%
(50,12)(52,9)(55,13)(56,15)(57,14)(58,17)(59,15)(60,18)(61,19)(64,15)%
(66,16)(69,20)(71,20)(72,19)(75,23)(77,24)(80,28)(81,26)(83,29)(86,28)%
(88,29)(90,27)(91,24)(93,24)(95,27)(98,25)(100,29)(101,29)(103,28)(106,33)%
(107,31)(109,34)(110,34)(112,32)(113,36)(115,37)(118,40)(120,39)(123,44)%
(125,43)(126,46)(128,50)(129,49)(132,54)(134,54)(136,61)(138,59)(141,66)%
(143,68)(145,68)(148,75)(149,73)(151,79)(153,81)(154,84)(156,83)(158,89)%
(160,93)(161,92)(164,94)(166,95)(167,92)(170,97)(173,93)(175,95)(177,97)%
(181,93)(182,91)(184,96)(186,96)(189,98)(191,103)(193,104)(196,101)(199,103)%
(202,105)(203,102)(205,104)(208,101)(210,102)(213,99)(215,97)(217,98)%
(220,96)(222,101)(224,101)(226,99)(229,103)(231,104)(233,101)(236,102)%
(239,99)(240,97)(242,98)(244,98)(247,101)(249,99)(252,102)(254,102)(255,103)%
(257,101)(258,101)(261,98)(262,96)(264,99)(267,97)(270,102)(272,102)%
(273,104)(275,101)(278,100)(279,98)(281,99)(283,97)(288,102)(290,102)
      \put(255,115){$\langle\nu^K_t,\mathbf{1}_{\{y\}}\rangle$}
      \dottedline{0.5}(0,130)(3,126)(6,130)(8,132)(9,132)(11,134)(13,130)%
(15,128)(17,127)(20,133)(24,130)(26,132)(29,129)(30,131)(33,128)(35,129)%
(38,126)(40,131)(43,132)(45,129)(47,134)(48,133)(50,131)(52,132)(55,128)%
(56,130)(57,127)(58,125)(59,128)(60,129)(61,123)(64,125)(66,121)(69,124)%
(71,120)(72,119)(75,123)(77,123)(80,118)(81,119)(83,115)(84,116)(86,113)%
(88,115)(90,116)(91,113)(93,116)(95,115)(97,116)(98,112)(100,108)(101,106)%
(103,109)(106,106)(107,102)(109,103)(111,98)(113,100)(116,97)(118,94)%
(119,95)(121,88)(123,90)(124,86)(126,80)(128,82)(130,77)(131,71)(132,73)%
(135,66)(137,68)(139,62)(140,57)(141,58)(143,52)(145,54)(148,48)(149,48)%
(151,43)(153,42)(154,43)(156,39)(157,39)(159,32)(160,33)(162,28)(164,30)%
(166,25)(167,23)(170,26)(172,24)(174,23)(175,25)(177,24)(181,27)(182,26)%
(184,23)(186,24)(189,20)(191,22)(193,24)(194,21)(196,19)(198,20)(200,18)%
(202,15)(203,17)(205,20)(208,18)(210,21)(213,17)(215,14)(217,16)(220,12)%
(222,11)(224,13)(226,9)(227,10)(229,7)(231,5)(233,7)(234,5)(236,8)%
(239,6)(240,9)(242,10)(244,7)(247,10)(249,6)(250,7)(252,5)(254,8)%
(255,4)(257,6)(258,4)(261,2)(262,4)(264,3)(267,1)(269,2)(270,0)
      \put(205,32){$\langle\nu^K_t,\mathbf{1}_{\{x\}}\rangle$}
    \end{picture}
  \end{center}
  \caption{{\small The two steps of the invasion of a mutant trait $y$ in a monomorphic population with trait $x$.}}
  \label{fig:inv-fix}
\end{figure}

\noindent The first phase stops either when the mutant population gets extinct or reaches a
fixed small density $\eta>0$ (at time $t_1$ in Fig.~\ref{fig:inv-fix}). During all this phase,
the mutant density is small, and so, using the perturbed version of the large deviation result
(Proposition~\ref{prop:FW}), we can prove that the resident population stays close to its
equilibrium density $\bar{\mathbf{u}}(\mathbf{x})$ and the resource concentrations stay close
to $\bar{\mathbf{R}}(\mathbf{x})$. Therefore, similarly as in the proof of
Theorem~\ref{thm:GD}-(ii), the number of mutant individuals can be compared with branching
processes with birth rate $\sum_k\eta_k(y)\bar R_k(\mathbf{x})\pm C\varepsilon$ and death rate
$d(y)$. The growth rate of this branching process is close to the fitness $f(y; \mathbf{x})$,
which hence describes the ability of the initially rare mutant trait $y$ to invade the
equilibrium resident population with traits $x_1,\ldots,x_n$. If this fitness is positive
(i.e.\ if the branching processes are super-critical), the probability that the mutant
population reaches density $\eta>0$ at some time $t_1$ is close to the probability that the
branching process reaches $\eta K$, which is itself close to its survival probability, i.e.
close to $[f(y;\mathbf{x})]_+/\left(\sum_k\eta_k(y)\bar R_k(\mathbf{x})\right))$ when $K$ is
large. In addition, the comparison with branching processes shows that this first phase ends
before a time of order $\log K$ similarly as in the proof of Theorem~\ref{thm:GD}-(ii) (see
the proof of Lemma~3 in~\cite{C06}), and so the assumption
\begin{equation}
  \label{eq:hyp-log-K}
  \log K \ll \frac{1}{Ku_K}  
\end{equation}
ensures that no new mutation occurs during this first phase with high probability.

\me If the mutant population invades (i.e. reaches the density $\eta$), the second phase
stops when the population densities and the resource concentrations reach
$B_{\eta}(\bar{\mathbf{x}})$, where $\bar{\mathbf{x}}=(x_1,\ldots,x_n,y)$, and when the traits
$i$ such that $\bar u_i(\bar{\mathbf{x}})=0$ go extinct in the population (at time $t_2$ in
Fig.~\ref{fig:inv-fix}). Theorem~\ref{thm:GD}-(i) and-(ii) ensure that this phase is completed
with a probability converging to one after a time of order $\log K$. Note that
Assumption~(\ref{B}) is required to be able to apply Theorem~\ref{thm:GD}-(i) for almost all
mutant trait $y$ born in a population with traits $x_1,\ldots,x_n$. Again, the
assumption~(\ref{eq:hyp-log-K}) ensures that no new mutation occurs during this second phase
with high probability.

% Theorem \ref{thm:GD}-(i) ensures that, when
% $K\rightarrow+\infty$, the population densities
% $(\langle\nu_t^K,\mathbf{1}_{\{x_1\}}\rangle,\ldots,\langle\nu_t^K,\mathbf{1}_{\{x_n\}}\rangle,\langle\nu_t^K,\mathbf{1}_{\{y\}}\rangle)$
% and the resources concentrations $\mathbf{R}^K(t)$ are close to the solution of the chemostat system $\text{CH}(n+1,(x_1,\ldots,x_n,y))$ with
% the same initial condition, on any time interval $[0,T]$. Theorem~\ref{equilibrium} ensures that, if $\eta$ is sufficiently small,
% then any solution to the chemostat system starting with positive population densities in some neighborhood of
% $(\bar{u}_1(\mathbf{x}),\ldots,\bar{u}_n(\mathbf{x}),0)$ converges to the new equilibrium $\mathbf{\bar u}(x_1,\ldots,x_n,y)$ as time
% goes to infinity. Therefore, the population densities reach with high probability the $\eta$-neighborhood of $\mathbf{\bar
%   u}(x_1,\ldots,x_n,y)$ at some time $t_2$.

% \noindent Finally, in the last phase, we use Theorem \ref{thm:GD}-(ii) by Assumption~\eqref{B}, we can approximate the
% densities of the traits $x_j$ such that $\bar u_j(x_1,\ldots,x_n,y)=0$ by branching processes which are sub-critical. Therefore, they
% reach $0$ in finite time and the process comes back to the first step until the next mutation.

% \noindent We will prove that the duration of these three phases is of order $\log K$. Therefore, under the assumption
% the next mutation occurs after these three phases with high probability.

\me
These two phases are summarized in the following lemma, which can be proved as Lemma~A.4
in~\cite{CM11}.

\begin{lem}
  \label{lem:aft-mut}
  Let $\mbox{Supp}(\nu^K_0)=\{x_1,\ldots,x_n,y\}$ where $x_1,\ldots,x_n$ coexist and $y$ is a mutant trait that satisfy
  Assumption~\eqref{B}. We shall denote $x_{n+1}=y$ and $\bar{\mathbf{x}}=(x_1,\ldots,x_n,x_{n+1})$ for convenience. We define
  \begin{align*}
    \tau^K_1&=\inf\{t\geq 0: (\nu_t^{K},\mathbf{R}^K(t))\in B_{\varepsilon}(\bar{\mathbf{x}}) \mbox{\ and\
    }\forall i\text{\ s.t.\ }\bar u_i(\bar{\mathbf{x}})=0,\ \langle \nu_t^{K},\mathbf{1}_{\{x_i\}}\rangle=0\} \\ 
    \tau^K_2 &=\inf\{t\geq 0:\langle \nu_t^{K},\mathbf{1}_{\{y\}}\rangle=0 \text{ and  }
    ( \nu_t^{K},\mathbf{R}^K(t))\in B_{\varepsilon}(\mathbf{x}) \}.
  \end{align*}
  Assume that $\langle \nu_0^{K},\mathbf{1}_{\{y\}}\rangle=1/K$ (a single initial mutant). Then, there exists $\varepsilon_0$ such
  that for all $\varepsilon<\varepsilon_0$ and  if $( \nu_0^{K},\mathbf{R}^K(0))\in B_{\varepsilon}(\mathbf{x})$,
  \begin{gather*}
    \lim_{K\rightarrow+\infty}\PP(\tau^K_1<\tau^K_2)
    =\frac{[f(y;\mathbf{x})]_+}{\sum_k \eta_k(y)\bar R_k(\mathbf{x})},\quad
    \lim_{K\rightarrow+\infty}\PP(\tau^K_2<\tau^K_1)
    =1-\frac{[f(y;\mathbf{x})]_+}{\sum_k \eta_k(y)\bar R_k(\mathbf{x})} \\
     \mbox{and}\quad\forall \eta>0,\quad
    \lim_{K\rightarrow+\infty}\PP\left(\tau^K_1\wedge\tau^K_2<\frac{\eta}{K
      u_K}\wedge T^K_\text{mut}\right)=1.
  \end{gather*}
\end{lem}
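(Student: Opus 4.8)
The plan is to follow the two-phase decomposition sketched above, mirroring the proof of Lemma~A.4 in~\cite{CM11}. Throughout, since all events of interest are ${\cal F}_{T^K_\text{mut}}$-measurable, it suffices to work with the process in which all birth rates with mutation are set to zero, and to check at the end that the stopping times involved are all $O(\log K)$; then, by Corollary~\ref{lemma:not-too-many-mutation} and the assumption $\log K\ll 1/(K\mu_K)$ in~\eqref{eq:u_K-K}, they are smaller than $\eta/(K\mu_K)\wedge T^K_\text{mut}$ with probability tending to $1$. With mutations turned off, the resident subpopulation stays supported on $\{x_1,\ldots,x_n\}$ and the mutant subpopulation on $\{y\}$, so the whole process is a birth-and-death chain coupled with the resource ODE.

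\emph{First phase (mutant rare).} Let $t_1^K$ be the first time the mutant density $\langle\nu^K_t,\mathbf{1}_{\{y\}}\rangle$ either reaches a small threshold $\eta'>0$ or hits $0$. While it stays below $\eta'$, the resident traits together with the resources evolve as the piecewise-deterministic process of Section~\ref{sec:GD} with a perturbation $a^K_k(t)=-R^K_k(t)\langle\nu^K_t,\mathbf{1}_{\{y\}}\rangle\eta_k(y)$ of the resource dynamics~\eqref{eq:pert-ress}, bounded by a constant times $\eta'$. Choosing first $\varepsilon$ and then $\eta'$ small enough, Proposition~\ref{prop:FW} applied at $\mathbf{x}$ ensures that, on a time window of length $e^{V_\varepsilon K}$ (hence much longer than $\log K$), the resident densities and resources stay in $B_\varepsilon(\mathbf{x})$ with probability tending to $1$. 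On this event the birth rate $\sum_k\eta_k(y)R^K_k(t)$ of each mutant individual lies in $[\sum_k\eta_k(y)\bar R_k(\mathbf{x})-C\varepsilon,\ \sum_k\eta_k(y)\bar R_k(\mathbf{x})+C\varepsilon]$ and its death rate equals $d(y)$, so $K\langle\nu^K_t,\mathbf{1}_{\{y\}}\rangle$ is stochastically sandwiched between two continuous-time binary branching processes started from one individual, with Malthusian parameters $f(y;\mathbf{x})\pm C\varepsilon$. Standard branching-process estimates (as in the proof of Lemma~3 of~\cite{C06}) then yield: (a) $t_1^K=O(\log K)$ with high probability; (b) conditionally on reaching $\eta'$, $\langle\nu^K_{t_1^K},\mathbf{1}_{\{y\}}\rangle=\eta'+o(1)$ and $(\nu^K_{t_1^K},\mathbf{R}^K(t_1^K))$ is within $\varepsilon$ of $(\bar{\mathbf{u}}(\mathbf{x}),\bar{\mathbf{R}}(\mathbf{x}))$ on the resident coordinates; and (c) the probability of reaching $\eta'$ converges, letting $K\to\infty$ and then $\varepsilon,\eta'\to 0$, to $[f(y;\mathbf{x})]_+/(\sum_k\eta_k(y)\bar R_k(\mathbf{x}))$, the survival probability of the limiting branching process.

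\emph{Second phase (competition).} If the mutant died out during the first phase, then $(\nu^K_{t_1^K},\mathbf{R}^K(t_1^K))$ has support in $\{x_1,\ldots,x_n\}$ and lies near $(\bar{\mathbf{u}}(\mathbf{x}),\bar{\mathbf{R}}(\mathbf{x}))$; Theorem~\ref{thm:GD}-(i) applied at $\mathbf{x}$ then shows that the process enters $B_\varepsilon(\mathbf{x})$ within an additional time $O(1)$ and stays there, so $\tau^K_2<\tau^K_1$ and $\tau^K_2=O(\log K)$. If instead the mutant reached density $\eta'$, we restart from $(\nu^K_{t_1^K},\mathbf{R}^K(t_1^K))$, which by (b) lies in a compact subset of $(0,\infty)^{n+1}\times\mathbb{R}^r$; Assumption~\eqref{B} guarantees that for a.e.\ mutant $y$ the equilibrium $\bar{\mathbf{u}}(\bar{\mathbf{x}})$ is non-degenerate, i.e.\ each vanishing coordinate has strictly negative invasion fitness for the surviving subfamily, so Theorem~\ref{thm:GD}-(i) and-(ii) applied at $\bar{\mathbf{x}}$ give that the traits with $\bar u_i(\bar{\mathbf{x}})=0$ go extinct within time $(a+\delta)\log K$ and the process enters and remains in $B_\varepsilon(\bar{\mathbf{x}})$. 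Hence $\tau^K_1<\tau^K_2$ and $\tau^K_1=O(\log K)$. Combining the two phases, $\PP(\tau^K_1<\tau^K_2)$ equals, up to $o(1)$, the first-phase invasion probability and thus tends to $[f(y;\mathbf{x})]_+/(\sum_k\eta_k(y)\bar R_k(\mathbf{x}))$; the complement gives $\PP(\tau^K_2<\tau^K_1)$; and in both cases $\tau^K_1\wedge\tau^K_2=O(\log K)\ll 1/(K\mu_K)$ by~\eqref{eq:u_K-K}, which gives the last assertion.

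I expect the first phase to be the main obstacle: one must run the non-standard perturbed large-deviation estimate of Proposition~\ref{prop:FW} on a time window long enough to cover the whole (possibly $O(\log K)$-long) sub-/super-critical excursion of the mutant, while simultaneously maintaining the two-sided branching comparison whose parameters converge to the exact ones only after $\varepsilon,\eta'\to 0$. Getting the order of limits right ($K\to\infty$ first, then $\varepsilon,\eta'\to 0$) and keeping the estimates uniform on the relevant events is the delicate point, exactly as in~\cite[Lemma~A.4]{CM11}; no essentially new ingredient beyond Proposition~\ref{prop:FW} and Theorem~\ref{thm:GD} is needed.
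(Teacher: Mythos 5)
Your proposal follows essentially the same route as the paper: the paper itself only sketches this lemma via the same two-phase decomposition (rare-mutant phase controlled by the perturbed large-deviation estimate of Proposition~\ref{prop:FW} plus a two-sided branching-process comparison giving the invasion probability $[f(y;\mathbf{x})]_+/\sum_k\eta_k(y)\bar R_k(\mathbf{x})$ in time $O(\log K)$, then the competition phase handled by Theorem~\ref{thm:GD}~(i)--(ii) under Assumption~\eqref{B}, with the time-scale separation~\eqref{eq:u_K-K} excluding further mutations), referring to Lemma~A.4 of~\cite{CM11} for the details. Your write-up matches this strategy and its ingredients, including the identification of the resource perturbation $a^K_k$ and the order of limits, so it is consistent with the paper's argument.
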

% Note that, by Proposition~\ref{prop:survie-mutant}, $\{\tau^K_2<\tau^K_1\}$ means that the mutant trait $y$ goes extinct, and
% $\{\tau^K_1<\tau^K_2\}$ means that the mutant trait $y$ invades and survives (until the next mutation).

%\me

\me
Combining all the previous results, we can prove as in \cite{C06}  that 
 for all $\varepsilon>0$, $t>0$ and
$\Gamma\subset{\cal X}$ measurable,
\begin{equation}
  \label{eq:pf-C6-0}
  \lim_{K\rightarrow+\infty}\mathbb{P}(A_{\varepsilon,n}(t,\Gamma))
  =\mathbb{P}\Big(\mbox{Supp}(\Lambda^\sigma_t)\subset\Gamma\mbox{\ and has $n$
    elements}\Big)
\end{equation}
where the event
\begin{multline*}
  A_{\varepsilon,n}(t,\Gamma):=\Big\{\mbox{Supp}(\nu^K_{t/K\mu_K})\subset\Gamma \mbox{\ has $n$ elements that coexist, say\
  }x_1,\ldots,x_n, \\ \mbox{\ and\ }\forall 1\leq i\leq n,\ |\langle\nu^K_{t/K\mu_k},\mathbf{1}_{\{x_i\}}\rangle
  -\bar{u}_i(\mathbf{x})|<\varepsilon\Big\}
\end{multline*}
and
$(\Lambda^\sigma_t,t\geq 0)$ is the PES defined in Theorem~\ref{thm:PES-fdd}.

\bi 
The proof ends as in \cite{C06}.

\section{About the sign of the fitness functions}
\label{sec:sign-fitness}

In the PES of Theorem~\ref{thm:PES-fdd}, the success of a mutant invasion is governed by the sign of its fitness. Our goal in this
section is to study the fitness of mutant traits in the neighborhood of coexisting traits. These results are used in
Section~\ref{sec:small-mut} to study the local direction of evolution in the PES, and the phenomenon of evolutionary branching.

\me We will assume in all what follows that the traits are one-dimensional, i.e.\ ${\cal X}\subset\RR$.

\begin{prop}
  \label{prop:sign-fitn-1}
  For all $\mathbf{x}^*=(x^*_1,\ldots,x^*_n)\in{\cal D}_n$, we have for all $1\leq i\leq n$, when
  $\mathbf{x}=(x_1,\ldots,x_n)\in{\cal D}_n\rightarrow \mathbf{x}^*$ and $y\rightarrow x^*_i$,
  $$
  f(y;\mathbf{x})= (y-x_i)\left(-d'(x^*_i)+\sum_{k=1}^r
    \frac{g_k\eta'_k(x^*_i)}{1+\sum_{j=1}^n\eta_k(x^*_j)\bar u_j(\mathbf{x}^*)}+o(1)\right).
  $$
\end{prop}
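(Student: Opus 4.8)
The plan is to reduce the statement to a first-order Taylor expansion of $f(\cdot\,;\mathbf{x})$ in its first variable, using as base point the trait $x_i$ itself rather than $x^*_i$. The key observation is that, since $x_1,\ldots,x_n$ coexist, i.e.\ $\mathbf{x}\in\mathcal{D}_n$, Theorem~\ref{equilibrium} (equivalently~\eqref{eq:n-morph-eq}) gives $\bar u_i(\mathbf{x})>0$ for all $i$, hence the equilibrium relation $d(x_i)=\sum_{k=1}^r\eta_k(x_i)\bar R_k(\mathbf{x})$ holds, which means precisely $f(x_i;\mathbf{x})=0$. Since the map $y\mapsto f(y;\mathbf{x})=-d(y)+\sum_k\eta_k(y)\bar R_k(\mathbf{x})$ is $C^1$ in $y$ by Assumption~\eqref{A1}, I would then write
$$
f(y;\mathbf{x})=f(y;\mathbf{x})-f(x_i;\mathbf{x})=(y-x_i)\int_0^1\partial_1 f\bigl(x_i+t(y-x_i);\mathbf{x}\bigr)\,dt,
$$
where $\partial_1 f(z;\mathbf{x})=-d'(z)+\sum_{k=1}^r\eta_k'(z)\bar R_k(\mathbf{x})$.

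Next, let $\mathbf{x}\in\mathcal{D}_n$ tend to $\mathbf{x}^*$ and $y$ tend to $x^*_i$ (this makes sense since $\mathcal{D}_n$ is open by Lemma~\ref{lem:bar-u-continue}, so $\mathbf{x}^*\in\mathcal{D}_n$ and $\mathbf{x}$ stays in $\mathcal{D}_n$ eventually). Then $x_i\to x^*_i$, so $x_i+t(y-x_i)\to x^*_i$ uniformly for $t\in[0,1]$. By Lemma~\ref{lem:bar-u-continue} the function $\bar{\mathbf{u}}$ is continuous on $\mathcal{D}_n$, hence $\bar R_k(\mathbf{x})=g_k/(1+\sum_j\eta_k(x_j)\bar u_j(\mathbf{x}))\to\bar R_k(\mathbf{x}^*)$; moreover $d'$ and $\eta_k'$ are continuous by Assumption~\eqref{A1}. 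Therefore $\partial_1 f(x_i+t(y-x_i);\mathbf{x})\to\partial_1 f(x^*_i;\mathbf{x}^*)$ uniformly in $t\in[0,1]$, so that
$$
\int_0^1\partial_1 f\bigl(x_i+t(y-x_i);\mathbf{x}\bigr)\,dt=\partial_1 f(x^*_i;\mathbf{x}^*)+o(1).
$$
Finally, using $\bar R_k(\mathbf{x}^*)=g_k/(1+\sum_j\eta_k(x^*_j)\bar u_j(\mathbf{x}^*))$, one has
$$
\partial_1 f(x^*_i;\mathbf{x}^*)=-d'(x^*_i)+\sum_{k=1}^r\frac{g_k\eta_k'(x^*_i)}{1+\sum_{j=1}^n\eta_k(x^*_j)\bar u_j(\mathbf{x}^*)},
$$
which is exactly the bracketed constant in the statement; combining this with the two displays above yields the claimed expansion.

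As for the difficulty: there is essentially no genuine obstacle here, but the proof does rely on one small point that must not be overlooked, namely that the expansion is performed around $x_i$ and not around $x^*_i$. Because $\mathbf{x}$ coexists, $x_i$ has \emph{exactly} zero fitness against $\mathbf{x}$, so the constant term of the Taylor expansion vanishes identically and the factor $(y-x_i)$ comes out cleanly; had one expanded around $x^*_i$ instead, one would be left with the extra term $f(x^*_i;\mathbf{x})$, which is only $o(1)$ and would spoil the factorization. The other mild point is to justify $\bar R_k(\mathbf{x})\to\bar R_k(\mathbf{x}^*)$, which is granted by the openness of $\mathcal{D}_n$ and the continuity of $\bar{\mathbf{u}}$ established in Lemma~\ref{lem:bar-u-continue}.
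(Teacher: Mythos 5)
Your proof is correct and follows essentially the same route as the paper: both use the identity $f(x_i;\mathbf{x})=0$ (valid since $\mathbf{x}\in\mathcal{D}_n$), write the difference as an integral of the derivative along the segment from $x_i$ to $y$, and conclude by the continuity of $\bar{\mathbf{u}}$ from Lemma~\ref{lem:bar-u-continue}. The only cosmetic difference is that the paper expands $d$ and each $\eta_k$ separately rather than packaging them into $\partial_1 f$.
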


\begin{proof}
  Using the relation $f(x_i;\mathbf{x})=0$, we have
  \begin{align*}
    f(y;\mathbf{x}) & =
    -d(y)+d(x_i)+\sum_{k=1}^r\frac{g_k(\eta_k(y)-\eta_k(x_i))}{1+\sum_j\eta_k(x_j)\bar
      u_j(\mathbf{x})} \\
    & =-(y-x_i)\int_0^1 d'(x_i+(y-x_i)u)du
    +(y-x_i)\sum_{k=1}^r\frac{g_k\int_0^1\eta'_k(x_i+(y-x_i)u)du}{1+\sum_j\eta_k(x_j)\bar
      u_j(\mathbf{x})}.
  \end{align*}
  Proposition~\ref{prop:sign-fitn-1} then easily follows from Lemma~\ref{lem:bar-u-continue}.
\end{proof}

\noindent If we think of $y$ as a mutant trait born from the resident trait $x^*_i$ in the resident population of traits
$\mathbf{x}^*=\mathbf{x}$, this result gives the sign of the fitness function when mutations are small, i.e.\ when $|y-x^*_i|$ small,
except when the first-order term is zero. Note that this first-order term is given by the derivative of the fitness function with
respect to the first variable at $(x^*,x^*)$:
$$
\partial_1 f(x^*_i;\mathbf{x}^*)=-d'(x^*_i)+\sum_{k=1}^r \frac{g_k\eta'_k(x^*_i)}{1+\sum_{j=1}^d\eta_k(x^*_j)\bar u_j(\mathbf{x}^*)}.
$$
Observe also that, for all $1\leq i\leq n$, $f(x_i;\mathbf{x})=0$ for all $x_1,\ldots,x_n$ which coexist. Hence, if $(x_1,\ldots,x_n)$ is in
${\cal D}_n$, we have
$\
\partial_1 f(x_i;\mathbf{x})=-\partial_{i+1} f(x_i;\mathbf{x}).
$

\noindent The differentiability of the fitness function at a point $(y,x_1,\ldots,x_n)\in{\cal X}\times{\cal D}_n$ is ensured by
Lemma~\ref{lem:bar-u-continue}. The following result shows that coexistence of $n+1$ traits $(\mathbf{x},y)$ with $\mathbf{x}$ close
to $\mathbf{x}^*\in{\cal D}_n$ and $y$ close to one of the coordinates $x^*_i$ of $\mathbf{x}^*$ is only possible if $\partial_1
f(x^*_i;\mathbf{x}^*)=0$.

\begin{cor}
  \label{cor:coex}
  If $\mathbf{x}^*\in{\cal D}_n$ and $\partial_1 f(x^*_i;\mathbf{x}^*)\not =0$, then there exists a neighborhood ${\cal V}$ of
  $(\mathbf{x}^*,x^*_i)=(x^*_1,\ldots,x^*_n,x^*_i)$ such that ${\cal V}\cap{\cal D}_{n+1}=\emptyset$.
\end{cor}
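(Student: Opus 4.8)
The plan is to combine the coexistence criterion of Proposition~\ref{prop:n-morph} with the first–order expansion of the fitness from Proposition~\ref{prop:sign-fitn-1}, using crucially that $\mathcal{D}_n$ is open (Lemma~\ref{lem:bar-u-continue}). First I would fix the neighborhood $\mathcal{V}$ of $(\mathbf{x}^*,x^*_i)=(x^*_1,\ldots,x^*_n,x^*_i)$ small enough that for every $(\mathbf{x},y)=(x_1,\ldots,x_n,y)\in\mathcal{V}$ both $\mathbf{x}\in\mathcal{D}_n$ and the ``swapped'' vector $\mathbf{x}':=(x_1,\ldots,x_{i-1},y,x_{i+1},\ldots,x_n)$ lie in $\mathcal{D}_n$; this is possible because $\mathbf{x}'\to\mathbf{x}^*$ as $(\mathbf{x},y)\to(\mathbf{x}^*,x^*_i)$ and $\mathbf{x}^*\in\mathcal{D}_n$ with $\mathcal{D}_n$ open. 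On such a $\mathcal{V}$, Proposition~\ref{prop:n-morph} applies with index $i$ and mutant $y$, so that $(\mathbf{x},y)\in\mathcal{D}_{n+1}$ forces simultaneously $f(y;\mathbf{x})>0$ and $f(x_i;\mathbf{x}')>0$.

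Next I would apply Proposition~\ref{prop:sign-fitn-1} twice. Applied to the resident population $\mathbf{x}\to\mathbf{x}^*$ with mutant $y\to x^*_i$ it gives $f(y;\mathbf{x})=(y-x_i)\bigl(\partial_1 f(x^*_i;\mathbf{x}^*)+o(1)\bigr)$. Applied to the resident population $\mathbf{x}'$, whose $i$-th coordinate is $y\to x^*_i$, with mutant $x_i\to x^*_i$, it gives $f(x_i;\mathbf{x}')=(x_i-y)\bigl(\partial_1 f(x^*_i;\mathbf{x}^*)+o(1)\bigr)$, where in both cases $o(1)\to 0$ as $(\mathbf{x},y)\to(\mathbf{x}^*,x^*_i)$. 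Multiplying the two expansions,
\[
f(y;\mathbf{x})\,f(x_i;\mathbf{x}')=-(y-x_i)^2\bigl(\partial_1 f(x^*_i;\mathbf{x}^*)+o(1)\bigr)^2.
\]
Since $\partial_1 f(x^*_i;\mathbf{x}^*)\neq 0$ by hypothesis, after shrinking $\mathcal{V}$ so that the $o(1)$ term is in modulus strictly below $|\partial_1 f(x^*_i;\mathbf{x}^*)|$ on $\mathcal{V}$, the right-hand side is $\leq 0$ throughout $\mathcal{V}$ and $<0$ whenever $y\neq x_i$. Hence $f(y;\mathbf{x})$ and $f(x_i;\mathbf{x}')$ cannot both be positive, so $(\mathbf{x},y)\notin\mathcal{D}_{n+1}$; and if $y=x_i$ the tuple has a repeated coordinate, so it is not in $\mathcal{D}_{n+1}$ either, since coexistence is defined only for distinct traits. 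Therefore $\mathcal{V}\cap\mathcal{D}_{n+1}=\emptyset$.

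The only delicate point in this plan is the bookkeeping in the second application of Proposition~\ref{prop:sign-fitn-1}: one must check that the swapped vector $\mathbf{x}'$ is a genuine admissible resident configuration (i.e.\ lies in $\mathcal{D}_n$), which is exactly where openness of $\mathcal{D}_n$ enters, and that $\mathbf{x}'$ converges to the same base point $\mathbf{x}^*$, so that the error terms in the two expansions carry the common factor $\partial_1 f(x^*_i;\mathbf{x}^*)$ and their product yields a perfect square up to $o(1)$. Once this is set up correctly, the statement is an immediate consequence of the two cited propositions.
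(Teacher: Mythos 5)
Your proof is correct and follows essentially the same route as the paper: openness of ${\cal D}_n$ to make the swapped vector an admissible resident configuration, the first-order expansion of Proposition~\ref{prop:sign-fitn-1} to show $f(y;\mathbf{x})$ and $f(x_i;x_1,\ldots,x_{i-1},y,x_{i+1},\ldots,x_n)$ have opposite signs near $(\mathbf{x}^*,x^*_i)$, and the coexistence characterization (the paper invokes Theorem~\ref{equilibrium} directly, you use its packaged form, Proposition~\ref{prop:n-morph}) to rule out $(\mathbf{x},y)\in{\cal D}_{n+1}$. The only difference is cosmetic: the paper argues by contradiction along a sequence of neighborhoods, while you work directly on one fixed small neighborhood.
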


\begin{proof}
  Assume that for all neighborhood ${\cal V}$ of $(\mathbf{x}^*,x^*_i)$, ${\cal V}\cap{\cal D}_{n+1}\not=\emptyset$. Since ${\cal
    D}_n$ is an open subset of ${\cal X}^{n}$, there exists $\mathbf{x}$ close enough to $\mathbf{x}^*$ and $y$ close enough to
  $x^*_i$ such that $\mathbf{x}\in{\cal D}_n$, $(x_1,\ldots,x_{i-1},y,x_{i+1},\ldots,x_n)\in{\cal D}_n$, $(\mathbf{x},y)\in{\cal
    D}_{n+1}$ and, by Proposition~\ref{prop:sign-fitn-1}, $f(y;\mathbf{x})>0$ and $f(x_i;x_1,\ldots,x_{i-1},y,x_{i+1},\ldots,x_n)<0$.
  By Theorem~\ref{equilibrium}, this means that $\mathbf{\bar u}(\mathbf{x},y)=(\mathbf{\bar u}(\mathbf{x}),0)$, which contradicts
  the fact that $(\mathbf{x},y)\in{\cal D}_{n+1}$.
\end{proof}

\noindent In the neighborhood of points $\mathbf{x}^*\in{\cal D}_n$ such that $\partial_1 f(x^*_i,\mathbf{x}^*)=0$, we have more
precise results on the sign of the fitness.
\begin{prop}
  \label{prop:sign-fitn-2}
  Fix $\mathbf{x}^*=(x_1^*,\ldots,x_n^*)\in{\cal D}_n$ and $1\leq i\leq n$ such that
  $$
  \partial_1 f(x^*_i;\mathbf{x}^*)=-d'(x^*_i)+\sum_{k=1}^r \frac{g_k\eta'_k(x^*_i)}{1+\sum_{j=1}^d\eta_k(x^*_j)\bar
    u_j(\mathbf{x}^*)}=0.
  $$
  \begin{description}
  \item[\textmd{(i)}] When $(x_1,\ldots,x_n)\rightarrow \mathbf{x}^*$ and $y\rightarrow x^*_i$,
    $$
    f(y;\mathbf{x})=\frac{1}{2}(y-x_i)\left((a+o(1))(y-x^*) +\sum_{j=1}^n(b_j+o(1))(x_j-x^*_j)\right),
    $$
    where
    \begin{gather*}
      a=-d''(x^*)+\sum_{k=1}^r\frac{g_k
        \eta''_k(x^*_i)}{1+\sum_{l=1}^n\eta_k(x^*_l)\bar{u}_l(\mathbf{x}^*)} \, , \
      b_i=a+2\sum_{k=1}^r\frac{g_k\eta'_k(x^*_i) \sum_{l=1}^n\partial_{x_i}(\eta_k(x_l)\bar{u}_l(\mathbf{x}))(\mathbf{x}^*)}
      {\left[1+\sum_{l=1}^n\eta_k(x^*_l)\bar{u}_l(\mathbf{x}^*)\right]^2},
    \end{gather*}
    and for $j\not=i$,
    $$
    b_j=2\sum_{k=1}^r\frac{g_k\eta'_k(x^*_i) \sum_{l=1}^n\partial_{x_j}(\eta_k(x_l)\bar{u}_l(\mathbf{x}))(\mathbf{x}^*)}
    {\left[1+\sum_{l=1}^n\eta_k(x^*_l)\bar{u}_l(\mathbf{x}^*)\right]^2}.
    $$
  \item[\textmd{(ii)}] Assume that for all neighborhood ${\cal V}$ of $(\mathbf{x}^*,x^*_i)$ in ${\cal X}^{n+1}$, ${\cal V}\cap{\cal
      D}_{n+1}\not=\emptyset$. Then, when $\mathbf{x}\rightarrow\mathbf{x}^*$ and $y\rightarrow x^*_i$ such that
    $(\mathbf{x},y)\in{\cal D}_{n+1}$,
    $$
    \bar u_{i}(\mathbf{x},y) + \bar u_{n+1}(\mathbf{x},y) \longrightarrow \bar u_i(\mathbf{x}^*).
    $$
  \item[\textmd{(iii)}] Assume that for all neighborhood ${\cal V}$ of $(\mathbf{x}^*,x^*_i)$ in ${\cal X}^{n+1}$, ${\cal V}\cap{\cal
      D}_{n+1}\not=\emptyset$. Then, when $\mathbf{x}\rightarrow\mathbf{x}^*$, $y\rightarrow x^*_i$ and $z\rightarrow x^*_i$,
    $$
    f(z;\mathbf{x},y)=\frac{1}{2}(z-x_i)(z-y)(a+o(1)).
    $$
  \end{description}
\end{prop}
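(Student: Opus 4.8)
The three statements are proved in order, with (ii) used as an input for (iii). For (i) the plan is to push the computation of Proposition~\ref{prop:sign-fitn-1} one order further. Since $\mathbf{x}\in\mathcal{D}_n$ the relation $f(x_i;\mathbf{x})=0$ holds, so
\[
f(y;\mathbf{x})=(y-x_i)\int_0^1\partial_1 f\big(x_i+s(y-x_i);\mathbf{x}\big)\,ds, \qquad \partial_1 f(w;\mathbf{x})=-d'(w)+\sum_{k=1}^r\eta_k'(w)\,\bar R_k(\mathbf{x}).
\]
By Lemma~\ref{lem:bar-u-continue}, $\bar{\mathbf{u}}$ — hence $\bar{\mathbf{R}}$ and $(w,\mathbf{x})\mapsto\partial_1 f(w;\mathbf{x})$ — is $C^1$ on $\mathcal{X}\times\mathcal{D}_n$, and by hypothesis $\partial_1 f(x^*_i;\mathbf{x}^*)=0$. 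Substituting a first-order Taylor expansion of the integrand around $(x^*_i,\mathbf{x}^*)$, then using $\int_0^1\big(x_i+s(y-x_i)-x^*_i\big)\,ds=(x_i-x^*_i)+\tfrac12(y-x_i)$ and the regrouping $y-x^*_i=(y-x_i)+(x_i-x^*_i)$, one obtains an expansion of exactly the announced shape, with $a=\partial_{11}f(x^*_i;\mathbf{x}^*)$ and the $b_j$'s extracted from $\partial_{1,j+1}f(x^*_i;\mathbf{x}^*)$. Computing these mixed second derivatives amounts to differentiating $\bar R_k(\mathbf{x})$, i.e.\ — via the coexistence equations~\eqref{eq:n-morph-eq} and the Implicit Function Theorem, as in the proof of Lemma~\ref{lem:bar-u-continue} — differentiating $\bar{\mathbf{u}}$; after simplification this gives the stated expressions for the $b_j$.

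For (ii), let $(\mathbf{x}^{(m)},y^{(m)})\to(\mathbf{x}^*,x^*_i)$ with $(\mathbf{x}^{(m)},y^{(m)})\in\mathcal{D}_{n+1}$. By the boundedness part of Lemma~\ref{lem:bar-u-continue}, the vectors $\bar{\mathbf{u}}(\mathbf{x}^{(m)},y^{(m)})$ stay in a fixed compact subset of $\mathbb{R}_+^{n+1}$, so along a subsequence $\bar u_l(\mathbf{x}^{(m)},y^{(m)})\to v_l\ge 0$ for $1\le l\le n+1$. Passing to the limit in the $n+1$ coexistence equations~\eqref{eq:n-morph-eq}, and using $\eta_k(x_i^{(m)}),\eta_k(y^{(m)})\to\eta_k(x^*_i)$, one checks that the last of these equations and the $i$-th one become identical, and that the vector $\mathbf{w}\in\mathbb{R}_+^n$ defined by $w_l=v_l$ for $l\ne i$ and $w_i=v_i+v_{n+1}$ solves the system~\eqref{eq:hypA4} associated with $\mathbf{x}^*$. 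Since $\mathbf{x}^*\in\mathcal{D}_n$, $\bar{\mathbf{u}}(\mathbf{x}^*)$ is also a solution of~\eqref{eq:hypA4} in $\mathbb{R}_+^n$, so Assumption~\eqref{A4} forces $\mathbf{w}=\bar{\mathbf{u}}(\mathbf{x}^*)$; in particular $v_i+v_{n+1}=\bar u_i(\mathbf{x}^*)$ and $v_l=\bar u_l(\mathbf{x}^*)$ for $l\ne i$. As the limit does not depend on the extracted subsequence, the whole sequences converge, which is (ii); the same argument also yields $\bar R_k(\mathbf{x},y)\to\bar R_k(\mathbf{x}^*)$ along the limit, which will be needed in (iii).

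For (iii), the key observation is that, for $(\mathbf{x},y)\in\mathcal{D}_{n+1}$, the $C^2$ function $z\mapsto\phi(z):=f(z;\mathbf{x},y)=-d(z)+\sum_k\eta_k(z)\bar R_k(\mathbf{x},y)$ vanishes at the two \emph{distinct} points $z=x_i$ and $z=y$, since $x_i$ and $y$ are both among the coexisting traits and hence both satisfy the equilibrium relations~\eqref{eq:n-morph-eq}. The standard interpolation error formula then provides, for each $z$, a point $\xi$ in the smallest interval containing $\{x_i,y,z\}$ with $\phi(z)=\tfrac12\phi''(\xi)(z-x_i)(z-y)$, where $\phi''(\xi)=-d''(\xi)+\sum_k\eta_k''(\xi)\bar R_k(\mathbf{x},y)$. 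Letting $\mathbf{x}\to\mathbf{x}^*$ and $y,z\to x^*_i$ forces $\xi\to x^*_i$, and by (ii) $\bar R_k(\mathbf{x},y)\to\bar R_k(\mathbf{x}^*)$; hence $\phi''(\xi)\to -d''(x^*_i)+\sum_k\eta_k''(x^*_i)\bar R_k(\mathbf{x}^*)=a$, which is exactly $f(z;\mathbf{x},y)=\tfrac12(z-x_i)(z-y)(a+o(1))$.

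The step I expect to be the main obstacle is the coefficient bookkeeping in (i): pinning down the $b_j$ means carefully differentiating the equilibrium relations for $\bar{\mathbf{u}}$ — available in $C^2$ form only through Lemma~\ref{lem:bar-u-continue} and the non-degeneracy of the family $\{\eta_k(\mathbf{x})\}$ furnished by Assumption~\eqref{A4} and Proposition~\ref{prop:A4} — while keeping straight the double role of $x_i$, which is both the first argument of $f$ and a coordinate of $\mathbf{x}$ that is being perturbed. Parts (ii) and (iii) are structurally simpler, the only subtlety being that $(\mathbf{x}^*,x^*_i)\notin\mathcal{D}_{n+1}$ (the $n+1$ traits coincide there), so continuity of $\bar{\mathbf{u}}$ is \emph{not} available at that point and the compactness–uniqueness argument of (ii) is genuinely needed; it is also precisely what makes the limit in (iii) legitimate.
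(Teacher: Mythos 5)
Your proposal is correct and follows essentially the same route as the paper: part (i) is the same expansion of $(y-x_i)\int_0^1\partial_1 f$ around $(x^*_i,\mathbf{x}^*)$ using $\partial_1 f(x^*_i;\mathbf{x}^*)=0$ and the $C^2$ regularity of $\bar{\mathbf{u}}$ from Lemma~\ref{lem:bar-u-continue} (the paper is equally brief about the final bookkeeping for the $b_j$), and part (ii) is exactly the paper's compactness-plus-uniqueness argument via Assumption~\eqref{A4} applied to the limit of~\eqref{eq:n-morph-eq} with the $i$-th and $(n+1)$-th densities merged. For (iii) you package the argument through the interpolation error formula at the two zeros $x_i\neq y$ of $z\mapsto f(z;\mathbf{x},y)$, whereas the paper subtracts $\frac{z-x_i}{y-x_i}f(y;\mathbf{x},y)=0$ and writes the double-integral remainder explicitly; these are the same idea (vanishing of the fitness at both resident traits, $C^2$ regularity, and convergence of $\bar R(\mathbf{x},y)$ via part (ii)), so the difference is only cosmetic.
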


\noindent If one thinks of $y$ as a mutant born from trait $x_i$ in a coexisting population of traits $\mathbf{x}$, Point~(i) allows to
characterize the cases where $(\mathbf{x},y)\in{\cal D}_{n+1}$ thanks to Proposition~\ref{prop:n-morph}. Point~(ii) shows that, in
this case, the sum of the densities of traits $y$ and $x_i$ is close to $\bar u_i(\mathbf{x}^*)$. Point~(iii) shows that if a second
mutant $z$ is born from either $y$ or $x_i$, then its fitness is positive or negative, depending on the sign of $a$ and on the
position of $z$ with respect to $x_i$ and $y$. If $z$ is a mutant trait born from another resident trait $x_j$ with $j\not=i$, then
the sign of the fitness function can be determined using Proposition~\ref{prop:sign-fitn-1} and Point~(i) of
Proposition~\ref{prop:sign-fitn-2}.

\me\begin{proof}
Since $\partial_1 f(x^*_i;\mathbf{x}^*)=0$, we have
\begin{equation}
  \label{eq:evol-sing}
  d'(x^*_i)=\sum_{k=1}^r\frac{g_k\eta'_k(x^*_i)}{1+\sum_j\eta_k(x^*_j)\bar u_j(\mathbf{x}^*)},  
\end{equation}
and therefore
\begin{align*}
  f(y;\mathbf{x}) & =-(y-x_i)\int_0^1[d'(x_i+(y-x_i)u)-d'(x^*_i)]du \\
  & \qquad+(y-x_i)\sum_{k=1}^r\frac{g_k\int_0^1[\eta'_k(x_i+(y-x_i)u)-\eta'_k(x^*_i)]du} {1+\sum_j\eta_k(x_j)\bar u_j(\mathbf{x})} \\
  & \qquad+(y-x_i)\sum_{k=1}^rg_k\eta'_k(x^*_i) \left(\frac{1}{1+\sum_j\eta_k(x_j)\bar
      u_j(\mathbf{x})}-\frac{1}{1+\sum_j\eta_k(x^*_j)\bar
      u_j(\mathbf{x}^*)}\right) \\
  & =-(y-x_i)(d''(x^*_i)+o(1))\int_0^1(x_i+(y-x_i)u-x^*_i)du \\ & \qquad +(y-x_i)\sum_{k=1}^r\frac{g_k
    (\eta''_k(x^*_i)+o(1))\int_0^1(x_i+(y-x_i)u-x^*_i)du} {1+\sum_j\eta_k(x^*_j)\bar u_j(\mathbf{x}^*)} \\ &
  \qquad+(y-x_i)\sum_{k=1}^r\frac{g_k\eta'_k(x^*_i)}{(1+\sum_j\eta_k(x^*_j)\bar u_j(x^*_j))^2}(\mathbf{x}^*-\mathbf{x})^*
  \left(\sum_{j=1}^n\nabla(\eta_k(x_j)\bar u_j(\mathbf{x}))(x^*)+o(1)\right),
\end{align*}
where the differentiability of $\mathbf{x}\mapsto\bar u(\mathbf{x})$ comes from Lemma~\ref{lem:bar-u-continue}. Point (i) easily follows.

\noindent Point (ii) can be proved as follows. Recall that, by Lemma~\ref{lem:bar-u-continue}, $\bar{\mathbf{u}}$ is a bounded
function on $\mathcal{X}^{n+1}$. Let $u^*_i$ be any accumulation point $\bar u_{i}(\mathbf{x},y) + \bar u_{n+1}(\mathbf{x},y)$ when
$\mathbf{x}\rightarrow\mathbf{x}^*$ and $y\rightarrow x^*_i$ such that $(\mathbf{x},y)\in{\cal D}_{n+1}$. After extracting a
subsequence, we may also assume that for all $j\not=i$, $\bar u_j(\mathbf{x},y)\rightarrow u_j^*$.

\noindent Passing to the limit in~(\ref{eq:n-morph-eq}), we get for all $j\in\{1,\ldots,n\}$,
$$
\sum_{k=1}^r \frac{\eta_{k}(x^*_j)\, g_{k}}{1+ \sum_{m=1}^n\eta_{k}(x^*_m)\, u^*_m} = d(x^*_j).
$$
By Assumption \eqref{A4}, the unique solution of this system of equations is $\mathbf{\bar u}(\mathbf{x}^*)$ and the convergence follows.

\noindent Point (iii) can be proved with a similar computation as for Proposition~\ref{prop:sign-fitn-1}: using~(\ref{eq:n-morph-eq})
and the fact that $f(x_i;\mathbf{x},y)=0$, we have
$$
f(z;\mathbf{x},y) =-(z-x_i)\int_0^1 d'(x_i+(z-x_i)u)du+(z-x_i)\sum_{k=1}^r\frac{g_k\int_0^1\eta'_k(x_i+(z-x_i)u)du}
{1+\sum_{j=1}^n\eta_k(x_j)\bar u_j\mathbf{\mathbf{x},y}+\eta_k(y)\bar u_{n+1}(\mathbf{x},y)}.
$$

% \textbf{L'argument suivant ne semble en fait marcher qu'en dimension 1... ou alors il faut
%   utiliser le fait que $x^*$ est une singularit\'e \'evolutive, mais alors on retomber sur
%   des questions de r\'egularit\'e des $\bar u_i$.}

Then, using the relation
\[\begin{split}
0=&\frac{z-x_i}{y-x_i}f(y;\mathbf{x},y)=-(z-x_i)\int_0^1 d'(x_i+(y-x_i)u)du
\\
&+(z-x_i)\sum_{k=1}^r\frac{g_k\int_0^1\eta'_k(x_i+(y-x_i)u)du} {1+\sum_{j=1}^n\eta_k(x_j)\bar u_j\mathbf{\mathbf{x},y}+\eta_k(y)\bar
  u_{n+1}(\mathbf{x},y)},
\end{split}\]
we obtain
\begin{align*}
  f(z;\mathbf{x},y) & =-(z-x_i)\int_0^1[d'(x_i+(z-x_i)u)-d'(x_i+(y-x_i)u)]du \\ & \qquad
  +(z-x_i)\sum_{k=1}^r\frac{g_k\int_0^1[\eta'_k(x_i+(z-x_i)u)-\eta'_k(x_i+(y-x_i)u)]du}
  {1+\sum_{j=1}^n\eta_k(x_j)\bar u_j\mathbf{\mathbf{x},y}+\eta_k(y)\bar u_{n+1}(\mathbf{x},y)} \\
  & =-(z-x_i)(z-y)\int_0^1 u\int_0^1 d''(x_i+(y-x_i)u+(z-y)uv)dv\:du \\ & \qquad +(z-x_i)(z-y)\sum_{k=1}^r\frac{g_k\int_0^1
    u\int_0^1\eta''_k(x_i+(y-x_i)u+(z-y)uv)dv\:du} {1+\sum_{j=1}^n\eta_k(x_j)\bar u_j\mathbf{\mathbf{x},y}+\eta_k(y)\bar
    u_{n+1}(\mathbf{x},y)},
\end{align*}
and the result follows from Point~(ii).
\end{proof}

\section{Sketch of the proof of Theorem \ref{thm:br}}
\label{app:br}

We recall the main steps of the proof of the branching criterion in \cite{CM11}. 
\begin{enumerate}
\item Before the first coexistence time, the support of the PES, given by the TSS, reaches $(x^*-\eta,x^*+\eta)$ in finite time
  almost surely (at time $\theta^\sigma_\eta(x^*)$).
\item After time $\theta^\sigma_\eta(x^*)$, the (support of the) PES cannot exit $(x^*-\eta,x^*+\eta)$ while being monomorphic.
\item If $\partial_{11}f(x^*;x^*)<0$, either there is never coexistence, and then the result is proved, or there is coexistence of
  two traits after a finite time, and then, if two traits $x<y$ (say) in $(x^*-\eta,x^*+\eta)$ coexist,
  \begin{enumerate}
  \item by Proposition~\ref{prop:sign-fitn-2}~(iii), the only mutants traits $z$ that can invade the dimorphic population of traits
    $x$ and $y$ are such that $z\in(x,y)$;
  \item the three traits $x$, $y$ and $z$ cannot coexist, and at least one of the resident traits $x$ and $y$ goes extinct, i.e.\
    $\bar u_1(x,y,z)=0$ or $\bar u_2(x,y,z)=0$.
  \end{enumerate}
  This shows that, after any coexistence time, the distance between the two branches can only decrease, until one of the branches
  goes extinct.
\item If $\partial_{11}f(x^*;x^*)>0$,
  \begin{enumerate}
  \item there is almost surely coexistence of two traits in finite time;
  \item by Proposition~\ref{prop:sign-fitn-2}~(iii), if two traits $x<y$ (say) in $(x^*-\eta,x^*+\eta)$ coexist, the only mutant
    traits $z$ that can invade the population of traits $x$ and $y$ are such that $z\not\in[x,y]$;
  \item once such a mutant trait invades, $x$, $y$ and $z$ cannot coexist, and the intermediate trait only goes extinct, i.e.\
    $\bar{\mathbf{u}}(x,y,z)\in\mathbb{R}_+^*\times\{0\}\times\mathbb{R}_+^*$ if $x<y<z$, or
    $\bar{\mathbf{u}}(x,y,z)\in\{0\}\times(\mathbb{R}_+^*)^2$ if $z<x<y$.
  \end{enumerate}
  This shows that the two branches survive until one of them exits of $(x^*-\eta,x^*+\eta)$, and the last point of the proof consists
  in proving that the distance between the two branches becomes bigger than $\eta/2$ in finite time almost surely.
\end{enumerate}
The only steps that require a different proof from Theorem~4.9 of~\cite{CM11} are Steps~3.(b) and 4.(c). Indeed, in~\cite{CM11},
these steps were proved using general results on the long time behavior of 3-dimensional competitive Lotka-Volterra
systems~\cite{zeeman-93}. Here, we do not have such general results, but we have Theorem~\ref{equilibrium}.

\paragraph{Proof of Step~3.(b)}

Let us assume that $\partial_{11}f(x^*;x^*)<0$, and let $x<y\in(x^*-\eta,x^*+\eta)$ coexist, i.e.\ by Corollary~\ref{cor:coex}
$f(x;y)>0$ and $f(y;x)>0$. Let also $z$ be a mutant trait in $(x^*-\eta,x^*+\eta)$ that can invade the resident population of traits
$x$ and $y$, i.e.\ $z\in(x,y)$ and $f(z;x,y)>0$ by Proposition~\ref{prop:sign-fitn-2}~(iii).

\me Using the relation $f(x;y)-f(y;y)=f(x;y)=(x-y)\int_0^1\partial_1 f(y+u(x-y))du$, we have
\begin{equation}
  \label{eq:pf-br}
  \frac{\partial}{\partial x}\left(\frac{f(x;y)}{y-x}\right)=-\int_0^1 u\partial_{11}f(y+u(x-y);y)du.
\end{equation}
Since $\partial_{11}f(x^*;x^*)<0$, this is positive if $x$ and $y$ are sufficiently close to $x^*$. Hence, since $x<z<y$, we have
$f(z;y)/(y-z)>f(x;y)/(y-x)>0$, and thus $f(z;y)>0$. Similarly, $f(z;y)>0$. In addition, by Proposition~\ref{prop:sign-fitn-2}~(iii),
if $x$ and $z$ coexist (i.e.\ if $f(x;z)>0$), then $f(y;x,z)<0$ and if $y$ and $z$ coexist (i.e.\ if $f(y;z)>0$), then $f(x;y,z)<0$.
Therefore, by the characterization of $\bar{\mathbf{u}}(x,y,z)$ of Theorem~\ref{equilibrium}, if $f(x;z)>0$, then
$$
\bar{\mathbf{u}}(x,y,z)=\Big(\bar u_1(x,z),0,\bar u_2(x,z)\Big),
$$
and if $f(y;z)>0$, then
$$
\bar{\mathbf{u}}(x,y,z)=\Big(0,\bar u_1(y,z),\bar u_2(y,z)\Big).
$$
Conversely, in the case when $f(x;z)\leq 0$ and $f(y;z)\leq 0$, then by Theorem~\ref{equilibrium} again
$\bar{\mathbf{u}}(x,y,z)=(0,0,\bar u(z))$.

\me Finally, we see that $x$, $y$ and $z$ never coexist, and that at least one of the traits $x$ or $y$ goes extinct in
$\bar{\mathbf{u}}(x,y,z)$. Therefore, once two traits in $(x^*-\eta,x^*+\eta)$ coexist in the PES, then the next jump in the PES
reaches a position where either only one trait survives in $(x^*-\eta,x^*+\eta)$, or two traits survive in $(x^*-\eta,x^*+\eta)$,
closer to each other than before the jump.\hfill$\Box$\bigskip

\paragraph{Proof of Step~4.(c)}

Let us assume that $\partial_{11}f(x^*;x^*)>0$, and let $x<y\in(x^*-\eta,x^*+\eta)$ coexist, i.e.\ by Corollary~\ref{cor:coex}
$f(x;y)>0$ and $f(y;x)>0$. Let also $z$ be a mutant trait in $(x^*-\eta,x^*+\eta)$ that can invade the resident population of traits
$x$ and $y$, i.e.\ $z\not\in(x,y)$ and $f(z;x,y)>0$ by Proposition~\ref{prop:sign-fitn-2}~(iii). Let us assume for example that
$z<x<y$.

\me Then, it follows from~(\ref{eq:pf-br}) that $f(z;y)>0$. Similarly, since by assumption~(\ref{eq:hyp-evol-br})
$\partial_{22}f(x^*;x^*)>\partial_{11}f(x^*;x^*)>0$,
$$
\frac{\partial}{\partial x}\left(\frac{f(y;x)}{y-x}\right)=-\int_0^1 u\partial_{22}f(y;y+u(x-y))du<0
$$
for all $x$ and $y$ close enough to $x^*$. Therefore, $f(y;x)>0$ implies that $f(y;z)>0$. Hence $y$ and $z$ coexist and by
Proposition~\ref{prop:sign-fitn-2}~(iii) $f(x;y,z)<0$. All these conditions imply by Theorem~\ref{equilibrium} that
$$
\bar{\mathbf{u}}(x,y,z)=\Big(0,\bar u_1(y,z),\bar u_2(y,z)\Big).
$$
Similarly, if $x<y<z$, then
$$
\bar{\mathbf{u}}(x,y,z)=\Big(\bar u_1(x,z),0,\bar u_2(x,z)\Big).
$$
In all cases, $x$, $y$ and $z$ cannot coexist, and once two traits in $(x^*-\eta,x^*+\eta)$ coexist in the PES, then the next jump in
the PES reaches a position where two traits survive, farther from each other than before the jump.\hfill$\Box$\bigskip

\end{document}